\newcommand{\Q}{\mathbb{Q}}
\newcommand{\N}{\mathbb{N}}
\newcommand{\Z}{\mathbb{Z}}
\newcommand{\F}{\mathbb{F}}
\renewcommand{\P}{\mathbb{P}}
\newcommand{\A}{\mathbb{A}}
\newcommand{\T}{\mathbb{T}}
\renewcommand{\L}{\mathrm{L}}
\newcommand{\cA}{\mathcal{A}}
\newcommand{\cB}{\mathcal{B}}
\newcommand{\cT}{\mathcal{T}}
\newcommand{\cC}{\mathcal{C}}
\newcommand{\BMod}{\mathrm{BMod}}
\newcommand{\Alg}{\mathrm{Alg}}
\newcommand{\cQ}{\mathcal{Q}}
\newcommand{\cR}{\mathcal{R}}
\newcommand{\E}{\mathbb{E}}
\newcommand{\can}{\mathrm{can}}
\newcommand{\op}{\mathrm{op}}
\newcommand{\Cat}{\mathrm{Cat}}
\renewcommand{\Pr}{\mathrm{Pr}}
\newcommand{\perf}{\mathrm{perf}}
\newcommand{\st}{\mathrm{st}}
\renewcommand{\SS}{\mathbb{S}}
\newcommand{\Mon}{\mathrm{Mon}}
\newcommand{\Grp}{\mathrm{Grp}}
\renewcommand{\E}{\mathbb{E}}
\newcommand{\Spc}{\mathrm{Spc}}
\newcommand{\Set}{\mathrm{Set}}
\newcommand{\lto}{\longrightarrow}
\newcommand{\mmod}{/\!\!/}
\newcommand{\sslash}{\mmod}
\newcommand{\id}{\mathrm{id}}
\newcommand{\pr}{\mathrm{pr}}
\DeclareMathOperator{\Sp}{Sp}
\newcommand{\laxtimes}[1]{\mathop{\times\mkern-13mu\raise1.3ex\hbox{$\scriptscriptstyle\to$}_{#1}}}
\newcommand{\raxtimes}[1]{\mathop{\times\mkern-13mu\raise1.3ex\hbox{$\scriptscriptstyle\leftarrow$}_{#1}}}
\newcommand{\slax}{ \times\mkern-14mu\raise1ex\hbox{$\scriptscriptstyle\to$} }
\DeclareMathOperator{\RMod}{RMod}
\DeclareMathOperator{\Mod}{Mod}
\DeclareMathOperator{\Perf}{Perf}
\DeclareMathOperator{\Proj}{Proj}
\DeclareMathOperator{\Tor}{Tor}
\DeclareMathOperator{\Map}{Map}
\DeclareMathOperator{\map}{map}
\DeclareMathOperator{\End}{End}
\DeclareMathOperator{\Fun}{Fun}
\DeclareMathOperator{\Ind}{Ind}
\DeclareMathOperator{\fib}{fib}
\DeclareMathOperator{\cof}{cofib}
\DeclareMathOperator{\TC}{TC}
\DeclareMathOperator{\THH}{THH}
\DeclareMathOperator{\HC}{HC}
\DeclareMathOperator{\HH}{HH}
\DeclareMathOperator{\HP}{HP}
\DeclareMathOperator{\Nil}{Nil}
\newcommand{\wtimes}[2]{\odot_{#1}^{#2}}
\newtheorem{thm}{Theorem}
\newtheorem*{thm*}{Theorem}
\newtheorem{cor}[thm]{Corollary}
\newtheorem*{cor*}{Corollary}
\newtheorem{lemma}[thm]{Lemma}
\newtheorem{prop}[thm]{Proposition}
\theoremstyle{definition}
\newtheorem{dfn}[thm]{Definition}
\newtheorem{construction}[thm]{Construction}
\newtheorem{ex}[thm]{Example}
\newtheorem{rem}[thm]{Remark}
\newtheorem*{dfn*}{Definition}
\newtheorem*{Notation}{Notation}
\newtheorem*{introdfn}{Definition}
\theoremstyle{remark}
\numberwithin{thm}{section}
\newtheorem*{rem*}{Remark}
\newtheorem*{rems*}{Remarks}
\newtheorem*{ex*}{Example}
\theoremstyle{plain}
\newcounter{zaehler}
\newtheorem{introthm}[zaehler]{Theorem}
\title{On the \textit{K}-theory of pushouts}
\author{Markus Land}
\address{Institut f\"ur Mathematik, Fachbereich 08, Johannes Gutenberg-Universit\"at Mainz, D-55099 Mainz, Germany}
\email{mland@uni-mainz.de}
\author{Georg Tamme}
\email{georg.tamme@uni-mainz.de}
\thanks{The authors were partially supported by the Deutsche Forschungsgemeinschaft (DFG) through CRC 1085. ML was supported by the Danish National Research Foundation through the Copenhagen Centre
for Geometry and Topology (DNRF151). Results incorporated in this paper have
received funding from the European Unions Horizon 2020 research and innovation programme under the
Marie Sklodowska-Curie grant agreement No 888676. GT acknowledges funding by the Deutsche Forschungsgemeinschaft (DFG,
German Research Foundation) through the Collaborative Research Centre TRR 326 ‘Geometry and Arithmetic of Uniformized Structures’, project number 444845124.}
\date{\today}
\begin{document}

\begin{abstract}
We reveal a relation between the behaviour of localizing invariants $E$ on pushouts and on pullbacks of ring spectra. 
More concretely, we show that the failure of $E$ sending a pushout of ring spectra to a pushout is controlled by the value of $E$ on a pullback of ring spectra. Vice versa, in many situations, we show that the failure $E$ of sending a pullback square to a pullback is controlled by the value of $E$ on a pushout of ring spectra. 

The latter can be interpreted as identifying the $\odot$-ring, introduced in earlier work of ours, as a pushout which turns out to be explicitly computable in many cases. This opens up new possibilities for direct computations.
As further applications, we give new proofs of (generalizations) of Waldhausen's seminal results on the $K$-theory of generalized free products and obtain a general relation between the value of a localizing invariant on trivial square zero extensions and on tensor algebras.
\end{abstract}

\maketitle

\setcounter{tocdepth}{2}
\tableofcontents

\section{Introduction}

The purpose of this paper is to investigate the behaviour of localizing invariants on pushouts of ring spectra and to reveal a surprising relation to their behaviour on pullbacks. A prime example of a localizing invariant is given by algebraic $K$-theory and a prime example of a pushout of rings is given by the group ring of an amalgamated product of discrete groups. First results in this situation have been obtained by 
Gersten \cite{Gersten} and Waldhausen \cite{Waldhausen1,Waldhausen2}. One of Waldhausen's motivation to study the algebraic $K$-theory of group rings of amalgamated products was to prove that the Whitehead group of the fundamental group of certain 3-manifolds vanishes. This vanishing result is also implied by the Farrell--Jones conjectures, which have been initiated in seminal work of Farrell and Jones \cite{FJ1} and vastly extended in \cite{Lueck-hyperbolic, Lueck-CAT0,Lueck-lattices, Lueck-arithmetic}. The conjecture states that the algebraic $K$-theory of a group ring is described in terms of generalized equivariant cohomology, see \cite{Lueck-survey} for a survey on this (and related) conjecture and applications.
The algebraic $K$-theory (and related invariants such as Hochschild homology) of group rings is of general interest in algebraic and geometric topology as well as in representation theory and has been widely studied, for instance in the context of the Bass trace conjecture.
This paper gives a new proof of Waldhausen's findings on the $K$-theory of group rings of amalgamated products of groups and extends them to more general pushouts, in particular to possibly non-discrete rings, and to arbitrary localizing invariants. 

As indicated earlier, our results on pushouts of rings are surprisingly closely related
to the behaviour of localizing invariants on pullbacks. The precise nature of this relation rests upon our earlier work \cite{LT}. To explain it, we first summarize the necessary background from op.\ cit.
The main aim therein was to understand under what circumstances a given localizing invariant satisfies \emph{excision}, i.e.\ sends a Milnor square (particular pullback squares consisting of discrete rings) to a pullback square, and to give qualitative results on the failure of excision. This was achieved in the following sense: To a pullback square 
\begin{equation}\label{pullback-square}
\begin{tikzcd}\tag{$\square$}
	A \ar[r] \ar[d] & B \ar[d] \\ 
	A' \ar[r] & B'
\end{tikzcd}
\end{equation}
of ring spectra we have associated a new ring spectrum, the $\wtimes{}{}$-ring, together with a map $\odot \to B'$
which determines the failure of excision on \emph{any} localizing invariant. Unfortunately, we have not described the ring spectrum $\wtimes{}{}$ in a manner which made it appealing for a priori calculations --- it is defined as an endomorphism ring in a suitably defined stable $\infty$-category, and we have only given a single non-trivial calculation of this ring. Fortunately, for many applications of this result, we merely had to make use of very formal properties of this new ring spectrum and its map to $B'$. Much to our surprise, the main results of this paper show that in many cases of interest, the $\wtimes{}{}$-ring is described as an explicitly computable pushout. This fact opens up new possibilities for direct computations and gives an intriguing relation between the a priori unrelated works on pushouts and excision existing in the literature.

To state our main results, we fix a base $\E_2$-ring spectrum $k$ and define a $k$-localizing invariant to be a functor from the $\infty$-category $\Cat_\infty^k$ of small, idempotent complete, $k$-linear $\infty$-categories to a stable $\infty$-category sending exact sequences to fibre sequences.
For a $k$-localizing invariant $E$ and a $k$-algebra $R$, we set $E(R) = E(\Perf(R))$ and we call a commutative square of $k$-algebras a \emph{motivic pullback square} if it is sent to a pullback square by any $k$-localizing invariant. We consider a commutative square of $k$-algebras as follows.
\begin{equation}\label{diag}\tag{$\diamond$}
\begin{tikzcd}
	R \ar[r] \ar[d] & S \ar[d] \\
	R' \ar[r] & S'
\end{tikzcd}
\end{equation}

\begin{introthm}\label{ThmA}
Assume that \eqref{diag} is a pushout square. There is an associated natural $k$-algebra $\widetilde{R}$ and a refined commutative diagram of $k$-algebras
\[\begin{tikzcd}
	R \ar[drr, bend left] \ar[ddr, bend right] \ar[dr] & & \\
		& \widetilde{R} \ar[r] \ar[d] & S \ar[d] \\
		& R' \ar[r] & S' 
\end{tikzcd}\]
whose inner square is a motivic pullback square. The underlying $k$-module of $\widetilde{R}$ is equivalent to the pullback $R' \times_{S \otimes_R R'} S$ and the underlying diagram of $k$-modules is the canonical one.
\end{introthm}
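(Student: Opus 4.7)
The plan is to construct $\widetilde{R}$ as the pullback $R' \times_{S'} S$ in the $\infty$-category of $E_1$-$k$-algebras. Since the forgetful functor $\Alg_{E_1}(\Mod_k) \to \Mod_k$ preserves limits, the underlying $k$-module of $\widetilde{R}$ is automatically the pullback of underlying $k$-modules, which by the pushout hypothesis $S' \simeq S \otimes_R R'$ identifies with $R' \times_{S \otimes_R R'} S$ and carries the canonical maps. The map $R \to \widetilde{R}$ then arises from the universal property of the pullback applied to the given compatible maps $R \to R'$ and $R \to S$ witnessing commutativity of \eqref{diag}. This produces the required refined diagram with the asserted underlying $k$-module description of $\widetilde{R}$.

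The substantive content is the assertion that the inner square is a motivic pullback square. Here my plan is to invoke the main result of our earlier work \cite{LT}: to any pullback square of ring spectra of the shape \eqref{pullback-square} one naturally associates a ring spectrum $\odot$ together with a canonical map $\odot \to S'$, and the square is a motivic pullback square precisely when this structural map is an equivalence. Thus the proof reduces to showing that, for the particular inner square produced above, the associated $\odot$-ring is equivalent to $S'$ via its canonical map.

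The main obstacle is this identification, and it is precisely where the pushout hypothesis enters. I would unwind the construction of $\odot$ from \cite{LT} as an endomorphism object in a suitable lax pullback of perfect module categories. The key calculation is that, because \eqref{diag} is a pushout of $k$-algebras, one has $\Perf(S') \simeq \Perf(R') \otimes_{\Perf(R)} \Perf(S)$, which should force the lax pullback built from the inner square to degenerate sufficiently that $\odot$ is carried by the canonical map to $S'$. Concretely, this amounts to showing that the relevant base change $S \otimes_{\widetilde{R}} R'$ recovers $S \otimes_R R' \simeq S'$; the fact that $R \to \widetilde{R}$ factors through a pullback whose legs are $R \to R'$ and $R \to S$ should give this identification as $k$-modules, and the compatibility of the whole construction with $E_1$-structures should promote it to an equivalence of ring spectra. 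Checking this compatibility carefully, and confirming that it indeed matches the canonical structural map $\odot \to S'$ produced by \cite{LT}, is the technical heart of the argument.
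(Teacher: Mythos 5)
There is a genuine gap, and it occurs in the very first step: you set $\widetilde{R} := R'\times_{S'} S$ and justify the underlying-module claim by asserting $S'\simeq S\otimes_R R'$ ``by the pushout hypothesis''. This is false. The pushout of $\E_1$-$k$-algebras is an amalgamated free product, whose underlying $k$-module is a bar-type direct sum of alternating tensor powers, not the relative tensor product $S\otimes_R R'$ (which is merely an $(S,R')$-bimodule and in general carries no ring structure at all). For example, for group rings $k[G]\leftarrow k[H]\to k[G']$ one has $S'=k[G\star_H G']$ while $S\otimes_R R'=k[G'\times_H G]$; for $\SS[x]\leftarrow \SS[x,y]\to \SS[y]$ one has $S'=\SS[t]$ with $|t|=2$, while $S\otimes_R R'=\SS$. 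Consequently $R'\times_{S'}S$ is not the ring $\widetilde{R}$ of the theorem (whose fibre over $R$ is $J\otimes_R I$), and your subsequent reduction --- apply \cite{LT} to the pullback square over $S'$ and show its $\odot$-ring maps equivalently to $S'$ --- is aimed at the wrong square; there is no reason the $\odot$-ring of the pullback $R'\times_{S'}S$ should be carried equivalently to $S'$, and the theorem does not assert that it is.

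The paper's route is essentially the reverse of yours. One forms the Milnor context $(R',S,S\otimes_R R')$ --- here $S\otimes_R R'$ enters only as a pointed $(S,R')$-bimodule, and no ring structure on it is needed --- and applies the generalized main theorem (Theorem~\ref{thm:GLT}) to produce $\widetilde{R}=R'\boxtimes_{S\otimes_R R'}S$, whose underlying module is the pullback $R'\times_{S\otimes_R R'}S$ by construction, together with a motivic pullback square whose fourth corner is the a priori mysterious ring $R'\wtimes{\widetilde{R}}{S\otimes_R R'}S$. The pushout hypothesis is then used through Theorem~\ref{ThmB}: $R$ is by construction a tensorizer for this Milnor context (the map $S\otimes_R R'\to S\otimes_R R'$ is the identity), so the fourth corner is identified with the pushout $R'\amalg_R S=S'$. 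In other words, one does not verify that a given pullback square over $S'$ has trivial $\odot$-defect; one computes the $\odot$-ring of a square built from the bimodule $S\otimes_R R'$ and finds that it \emph{is} $S'$. A smaller further issue: the expression $\Perf(R')\otimes_{\Perf(R)}\Perf(S)$ is not defined for $\E_1$-algebras ($\Perf(R)$ is not an algebra object in $\Cat^k_\infty$), and the pushout $\Perf(R')\amalg_{\Perf(R)}\Perf(S)$ is computed in the paper as a Verdier quotient of an oriented fibre product (Theorem~\ref{thm:pushout-in-Cat-perf}), not as a relative tensor product of categories.
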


A consequence of the second part of Theorem~\ref{ThmA} is that the fibre of the ring map $R \to \widetilde{R}$ is given by $J\otimes_R I$ where $I = \fib(R \to R')$ and $J = \fib(R \to S)$. We may then think of $J \otimes_R I \to R$ as an ideal, and of $\widetilde{R}$ as the quotient of $R$ by this ideal. With $\widetilde{R}$ replaced by $R/(J \otimes_R I)$, Jeff Smith announced that the resulting square appearing in Theorem~\ref{ThmA} is sent to a pullback by $K$-theory in a talk at Mittag-Leffler in 2006.
A preprint, however, has not appeared.

For computational purposes, one is naturally led to analyze the effect of the map $R \to \widetilde{R}$ on a given localizing invariant. This is, in spirit, similar to the situation in \cite{LT}, where we analyzed the effect of $\wtimes{}{} \to B'$
on localizing invariants. 
For instance, for a Milnor square, $\wtimes{}{}$ is connective and the map $\wtimes{}{} \to B'$ is equivalent to the canonical truncation map $\wtimes{}{} \to \tau_{\leq 0}(\wtimes{}{})$; see \cite[Example 2.9]{LT}. 
As a consequence, for many localizing invariants of interest, one has very good control over the effect of the map $\wtimes{}{} \to B'$. This observation formed the basis of almost all our applications in \cite{LT}.
For instance, on the fibre of the cyclotomic trace or periodic cyclic homology in characteristic 0, this map induces an equivalence.

One may wonder whether similar results hold true for the map $R \to \widetilde{R}$ in place of $\wtimes{}{} \to B'$ in the following situation, somewhat dual to the case of Milnor squares: Namely, the case of a pushout diagram \eqref{diag} in which all rings are discrete, $\widetilde{R}$ is coconnective\footnote{In the case of pullback diagrams where all rings are connective, it follows that $\wtimes{}{}$ is also connective. Dually, however, given a pushout diagram where all rings are coconnective, $\widetilde{(-)}$ need not be coconnective.}, and the map $R \to \widetilde{R}$ identifies with the canonical map $\tau_{\geq0}(\widetilde{R}) \to \widetilde{R}$. This is for instance the case in the situation of Waldhausen's papers \cite{Waldhausen1,Waldhausen2}. Inspired by his results and further basic examples, we raised the question whether in this case, the map $R \to \widetilde{R}$ induces an equivalence in (connective) $K$-theory provided $R$ is regular coherent. We shared this question with Burklund and Levy who soon after were able to show that this is indeed correct under an additional flatness assumption on the negative homotopy groups of $\widetilde{R}$ (which are automatically satisfied in Waldhausen's situation), see \cite{BL}. Their argument is based on the theorem of the heart for $t$-structures of Barwick \cite{Barwick} and Quillen's classical d\'evissage theorem in $K$-theory of abelian categories \cite{Quillen-Higher-K}. As a result, in $K$-theory and under regularity and flatness assumptions, the map $R \to \widetilde{R}$ induces an equivalence. In other localizing invariants like $\THH$ or $\TC$, this is far from true (and closely related to the fact that these invariants are far from being $\A^{1}$-invariant), but one can make the error term somewhat explicit. 

\medskip

Theorem~\ref{ThmA} will be proven as an application of a generalization the main theorem of our earlier work \cite{LT} (Theorem~\ref{thm:GLT} below) and Theorem~\ref{ThmB} below. To explain those, we introduce the following bit of terminology inspired by Milnor's role in excision in $K$-theory and his introduction of what are now called Milnor squares. The notation is chosen to be compatible with that of \cite{LT}.

\begin{introdfn}\label{dfn:Milnor-context}
A \emph{Milnor context} (over $k$) consists of a triple $(A',B,M)$ where $A'$ and $B$ are $k$-algebras and $M$ is a pointed $(B,A')$-bimodule in $\Mod(k)$.
The basepoint of $M$ determines canonical maps $A' \to M \leftarrow B$ of which the first is right $A'$-linear and the second is left $B$-linear.
\end{introdfn}

A pullback square of ring spectra \eqref{pullback-square} as considered in \cite{LT} gives
 rise to the Milnor context $(A',B,B')$ in the evident way.
Classical Milnor squares are hence special cases of Milnor contexts in the above sense. The method of proof of \cite[Main Theorem]{LT} generalizes to this setting and yields the following, see Section~{\ref{sec:bimodule-construction}} for the details.

\begin{thm}[Generalized \cite{LT}]\label{thm:GLT}
To a Milnor context $(A',B,M)$ there are naturally associated $k$-algebras $A$ and $A' \wtimes{A}{M} B$, and a motivic pullback diagram of $k$-algebras 
\begin{equation} \begin{tikzcd}\label{square}
	A \ar[r] \ar[d] & B \ar[d] \\ 
	A' \ar[r] & A' \wtimes{A}{M} B .
\end{tikzcd}
\end{equation}
The underlying $(A,A)$-bimodule of $A$ canonically identifies with the pullback $A' \times_M B$, the underlying $(A',B)$-bimodule of $A' \wtimes{A}{M} B$ canonically identifies with $A' \otimes_A B$, and the underlying diagram of $(A,A)$-bimodules is the canonical one. 
\end{thm}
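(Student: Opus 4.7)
The plan is to generalize the strategy of \cite{LT}, which established the statement for the Milnor context $(A', B, B')$ arising from a pullback square of ring spectra. The key observation is that the argument there uses $B'$ essentially only through its $(B,A')$-bimodule structure and the two canonical maps $A' \to B'$ and $B \to B'$, which in the present set-up are replaced by the pointing of $M$. One therefore expects the construction to extend to arbitrary Milnor contexts with only minor modifications.

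Concretely, I would first associate to $(A', B, M)$ a stable $k$-linear $\infty$-category $\cD$ as a lax pullback of module categories,
\[ \cD = \Mod(A') \slax_{\Mod(A')} \Mod(B), \]
formed along the identity on $\Mod(A')$ and the base-change functor $(-) \otimes_B M \colon \Mod(B) \to \Mod(A')$, which exists because $M$ carries a $(B, A')$-bimodule structure. An object of $\cD$ is a triple $(X, Y, \phi)$ with $X \in \Mod(A')$, $Y \in \Mod(B)$, and $\phi \colon X \to Y \otimes_B M$ a right $A'$-linear map. The pointing of $M$ determines a canonical object $\mathbf{1} = (A', B, \phi_0)$, where $\phi_0$ is induced by the basepoint $A' \to M$. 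Setting $A := \End_\cD(\mathbf{1})$, the general formula for mapping spectra in a lax pullback immediately identifies the underlying $(A,A)$-bimodule of $A$ with $A' \times_M B$, and exhibits the canonical maps $A \to A'$, $A \to B$ as the images of $\mathbf{1}$ under the two projections $\cD \to \Mod(A')$ and $\cD \to \Mod(B)$.

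To produce the ring $A' \wtimes{A}{M} B$ and the motivic pullback square, I would imitate the Verdier quotient construction of \cite{LT}: the full subcategory of $\cD$ spanned by $(A', 0, 0)$ and $(0, B, 0)$ fits into an exact sequence of $k$-linear $\infty$-categories whose cofibre is a stable $\infty$-category in which the image of $\mathbf{1}$ has an endomorphism ring whose underlying $(A', B)$-bimodule identifies with $A' \otimes_A B$; by definition, this endomorphism ring is $A' \wtimes{A}{M} B$. Applying any $k$-localizing invariant to the resulting exact sequence yields the motivic pullback property for the square.

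The main obstacle is to verify at each step that the identifications made in \cite{LT} use $B'$ only through its pointed $(B, A')$-bimodule structure rather than through any ring structure. I expect no conceptually new input beyond careful bookkeeping: the basepoint of $M$ suffices to form $\mathbf{1}$ and the associated exact sequence, and the $\infty$-categorical endomorphism-ring formalism produces the desired ring and bimodule structures automatically. Naturality in the Milnor context then follows from the evident functoriality of the lax pullback and of the quotient construction.
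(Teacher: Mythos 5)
Your overall strategy is exactly the paper's: form the oriented fibre product $\RMod(A') \laxtimes{\id,\,-\otimes_B M} \RMod(B)$, use the basepoint of $M$ to single out the object $\Lambda=(A',B,m)$, set $A=\End(\Lambda)$, read off the pullback description of $A$ from the mapping-spectrum formula, pass to a Verdier quotient to define $A'\wtimes{A}{M}B$, and observe (as the paper does in a footnote) that the relevant identifications from \cite{LT} only ever used the pointed $(B,A')$-bimodule structure of $B'$, never its multiplication.

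There is, however, one concrete misstatement in your description of the quotient step that would derail the construction if taken literally. You propose to quotient $\cD$ by the full subcategory spanned by $(A',0,0)$ and $(0,B,0)$ and then take the endomorphism ring of the image of $\mathbf{1}$. But those two objects \emph{generate} $\cD$, so that quotient is zero; and in the correct quotient the image of $\mathbf{1}$ is zero by construction, so its endomorphism ring cannot be the $\odot$-ring. What one must do is quotient by the full subcategory $\RMod(A)\subseteq\cD$ generated by $\mathbf{1}=\Lambda$ itself (this requires first checking that $\Lambda$ is compact, so that Schwede--Shipley applies; this is the paper's Lemma~\ref{lemma:compact-generation}), and then define $A'\wtimes{A}{M}B$ as the endomorphism ring of the image of $(0,B,0)$ in the quotient $\cQ$. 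The fibre sequence $(0,B,0)\to\Lambda\to(A',0,0)$ shows that the images of $(0,B,0)$ and $\Omega(A',0,0)$ agree in $\cQ$ and generate it, which is what produces the two ring maps $A'\to A'\wtimes{A}{M}B\leftarrow B$ and, after applying a localizing invariant to the exact sequence $\RMod(A)\to\cD\to\cQ$, the motivic pullback square. With that correction your outline matches the paper's proof; the remaining bimodule identification $A'\otimes_A B\simeq A'\wtimes{A}{M}B$ is indeed the bookkeeping you anticipate, carried out via the fibre sequence $I\otimes_A B\to B\to A'\wtimes{A}{M}B$ with $I=\fib(B\to M)$ and a check of $(A',B)$-linearity.
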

\begin{Notation}
To emphasize the dependence of the $k$-algebra $A$ in the above theorem on the Milnor context $(A',B,M)$, we will sometimes use the notation $A' \boxtimes_M B$ for the $k$-algebra $A$. 
\end{Notation}

Viewing a pullback diagram \eqref{pullback-square} as a Milnor context as above, the square constructed in Theorem~\ref{thm:GLT} is the one already obtained in \cite[Main theorem]{LT}. In particular, the $k$-algebra $A' \boxtimes_{B'} B$ is canonically equivalent to $A$ (hence the notation). Moreover, in Proposition~\ref{prop:describing-multiplication}, we give a formula for the multiplication map of $A' \wtimes{A}{M} B$.

\medskip

We can now come to the main result of this paper. To state it, let $(A',B,M)$ be a Milnor context, and let $A_0$ be a $k$-algebra equipped with $k$-algebra maps $A' \leftarrow A_0 \to B$ participating in a commutative diagram
\[ \begin{tikzcd}
	A_0 \ar[r] \ar[d] & B \ar[d] \\
	A' \ar[r] & M
\end{tikzcd}\]
of $(A_0,A_0)$-bimodules, hence inducing a canonical map $\phi \colon B\otimes_{A_{0}} A' \to M$ of $(B,A')$-bimodules. Such a datum is equivalently described by a morphism of Milnor contexts (see Definition~\ref{def:map-of-milnor-contexts}) $(A_0,A_0,A_0) \to (A',B,M)$ and we show in Lemma~\ref{lemma:map-of-milnor-contexts} that the canonical $(A_{0},A_{0})$-bimodule map $A_{0} \to A$ refines canonically to a map of $k$-algebras. If the map $\phi$ is an equivalence, we call the morphism $(A_{0},A_{0},A_{0}) \to (A',B,M)$, or sometimes by abuse of notation $A_0$ itself, a \emph{tensorizer} for $(A',B,M)$.

\begin{introthm}\label{ThmB}
Let $(A',B,M)$ be a Milnor context with tensorizer $A_{0}$. Then, the canonical commutative square 
\[
\begin{tikzcd}
	A_0 \ar[r] \ar[d] & B \ar[d] \\
	A' \ar[r] & A' \wtimes{A}{M} B
\end{tikzcd}
\]
obtained from Theorem~\ref{thm:GLT} and the map $A_{0} \to A$ is a pushout diagram in $\Alg(k)$.
\end{introthm}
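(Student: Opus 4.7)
The plan is to verify the universal property of the pushout in $\Alg(k)$ by comparing both sides through their categories of left modules. Commutativity of the square is built in: by Lemma~\ref{lemma:map-of-milnor-contexts} applied to the morphism of Milnor contexts $(A_0,A_0,A_0) \to (A',B,M)$ encoded by the tensorizer, we obtain a canonical comparison map $\Phi\colon A' \sqcup_{A_0} B \to A' \wtimes{A}{M} B$ out of the pushout taken in $\Alg(k)$. The goal is to show $\Phi$ is an equivalence.

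To this end, I would pass to module $\infty$-categories via $\LMod(-)\colon \Alg(k) \to \Pr^L_k$. This functor preserves colimits, hence $\LMod_{A' \sqcup_{A_0} B} \simeq \LMod_{A'} \otimes_{\LMod_{A_0}} \LMod_B$. On the other hand, the construction of the $\wtimes$-ring in Section~\ref{sec:bimodule-construction} identifies $\LMod_{A' \wtimes{A}{M} B}$ with the pushout $\LMod_{A'} \otimes_{\LMod_A} \LMod_B$ equipped with a distinguished generator -- this identification is the conceptual origin of the motivic pullback property of Theorem~\ref{thm:GLT}. Since $\Phi$ is an equivalence of $k$-algebras if and only if the induced functor on module categories is an equivalence matching the canonical generators (Morita theory), the problem reduces to showing that the natural functor
\[
\LMod_{A'} \otimes_{\LMod_{A_0}} \LMod_B \longrightarrow \LMod_{A'} \otimes_{\LMod_A} \LMod_B,
\]
induced by the ring map $A_0 \to A$, is an equivalence.

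The main step, and the principal obstacle, is then to use the tensorizer condition to establish this equivalence. Resolving both relative tensor products via the two-sided bar construction, one is reduced to comparing the mapping spaces between the canonical generators, which work out to $B \otimes_{A_0} A'$ on the $A_0$-side and $B \otimes_A A' \simeq M$ on the $A$-side (the latter equivalence coming from the underlying bimodule description $A \simeq A' \times_M B$ provided by Theorem~\ref{thm:GLT}). The tensorizer hypothesis provides precisely the equivalence $B \otimes_{A_0} A' \xrightarrow{\simeq} M$ matching these, and promoting this identification compatibly through the full simplicial bar resolutions -- while simultaneously tracking that the distinguished generators are preserved -- yields the required equivalence of module $\infty$-categories, and hence the theorem.
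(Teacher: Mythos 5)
Your reduction to module categories breaks down at the step where you identify both sides with relative tensor products, and this is not a repairable detail but the crux of the matter. First, for a mere $\E_1$-algebra $A_0$ the expression $\LMod_{A'}\otimes_{\LMod_{A_0}}\LMod_B$ is not defined: $\LMod_{A_0}$ is a monoidal $\infty$-category only when $A_0$ is at least $\E_2$. Second, even when it is defined, $\LMod(-)$ does not carry pushouts of \emph{associative} algebras to relative tensor products of module categories: the relative tensor product corresponds to $A'\otimes_{A_0}B$, i.e.\ to the pushout of $\E_\infty$- (or $\E_2$-) algebras, whereas $A'\amalg_{A_0}B$ in $\Alg(k)$ is a free product (for $A_0=k$ and $A'=B=k[x]$ one obtains $k\langle x,y\rangle$ versus $k[x,y]$). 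The true statement, which the paper uses, is that $\Perf(-)\colon\Alg(k)\to\Cat_\infty^k$ is conservative and preserves pushouts, where the pushout in the target is the genuine \emph{categorical} pushout of the span $\Perf(A')\leftarrow\Perf(A_0)\to\Perf(B)$; computing that pushout is the entire content of the proof, and it is not accessible by a two-sided bar construction (a pushout of a span is a colimit of a different shape than a relative tensor product). Similarly, Section~\ref{sec:bimodule-construction} does not exhibit $\RMod(A'\wtimes{A}{M}B)$ as a tensor product over $\LMod_A$; it exhibits it as the Verdier quotient of the oriented fibre product $\RMod(A')\laxtimes{M}\RMod(B)$ by $\RMod(A)$. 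Finally, the equivalence $B\otimes_AA'\simeq M$ you invoke is false in general: from $A\simeq A'\times_MB$ one only obtains a canonical $(B,A')$-linear map $B\otimes_AA'\to M$ (Proposition~\ref{prop:describing-multiplication}), and its failure to be an equivalence is exactly of the excision-obstruction type (e.g.\ for the pullback $\Z\times_{\F_p}\Z$ of Example~\ref{ex:arithmetic-coordinate-axes} it is far from one).

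For comparison, the paper's route is: reduce via $\Perf(-)$ to the purely categorical Theorem~\ref{thm:pushout-in-Cat-perf}, which asserts that for any span $\cA'\leftarrow\cA_0\to\cB$ the Verdier quotient $\cQ$ of $\cA'\laxtimes{\id,uv_*}\cB$ by the thick subcategory generated by the image of $\cA_0$ is the pushout. That theorem is proved by passing to Ind-completions, dualizing along $\Pr^{\L}_{\st}\simeq(\Pr^{\mathrm{R}}_{\st})^{\op}$, and checking that the square of right adjoints is a pullback of $\infty$-categories; the key computation identifies $\cQ\simeq\ker(s_0)$ with the strict pullback $\cB\times_{v_*,u_*}\cA'$ inside $\cB\laxtimes{v_*,u_*}\cA'$. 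The tensorizer hypothesis enters only to identify the oriented fibre product of the span $\Perf(A')\leftarrow\Perf(A_0)\to\Perf(B)$ with the one attached to the Milnor context $(A',B,M)$ (Example~\ref{example:span-tensorizer}), so that the Verdier quotient is $\Perf(A'\wtimes{A}{M}B)$. Your instinct to compare mapping objects of generators using the tensorizer condition is in the right spirit, but the comparison must be run through the oriented fibre product and its Verdier quotient, not through relative tensor products of module categories.
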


Thus, given a tensorizer $A_0$ for a Milnor context $(A',B,M)$, the $k$-algebra $A'\wtimes{A}{M}B$ is the pushout $A' \amalg_{A_0} B$.

Theorem~\ref{ThmA} follows from Theorem~\ref{thm:GLT} and Theorem~\ref{ThmB} by considering the Milnor context $(R',S,S \otimes_R R')$ for which $R$ is a tensorizer by construction. 
In addition, it turns out that  there exist tensorizers for many classical examples of Milnor squares, so one can apply Theorem~\ref{ThmB} and consequently identify the $\wtimes{}{}$-ring of \cite{LT}. Some examples are discussed below and in Section~\ref{sec:examples}.

\subsection*{Applications}
In the following, we list some sample applications of the above theorems.
To begin, we go back to Waldhausen's results and first describe his setup in more detail. Following Waldhausen, a map of $k$-algebras $C \to A$ is called a pure embedding if it admits a $(C,C)$-linear retraction. Typical examples are maps of group rings induced by subgroup inclusions. For a span $A \leftarrow C \to B$ of pure embeddings of $k$-algebras, we denote 
by $\bar A$ and $\bar B$ the cofibres of the maps from $C$ to $A$ and $B$, respectively. To obtain information about  the value of a localizing invariant $E$ on the pushout $A \amalg_{C} B$, we ought to analyze the ring $\widetilde C$ given by Theorem~\ref{ThmA}. We show in Proposition~\ref{prop:waldhausen-ring-is-sz} that this ring is a trivial square zero extension $C\oplus \Omega M$ of $C$ by the $(C,C)$-bimodule $\Omega M$, where $M = \bar B \otimes_C \bar A$. Such trivial square zero extensions are closely related to the category of $M$-twisted nilpotent endomorphisms, see Proposition~\ref{prop:twisted-nilpotent}, and consequently $E(C\oplus \Omega M)$ decomposes as a direct sum of $E(C)$ and a Nil-term which we denote by $\Nil E(C;M)$.

\begin{introthm}
Let $E$ be a $k$-localizing invariant and let $A \leftarrow C \to B$ be a span of pure embeddings of $k$-algebras.
Then there is a canonical decomposition
\[
E(A \amalg_{C} B) \simeq \big[ E(A) \oplus_{E(C)} E(B) \big] \oplus \Sigma\Nil E(C;M).
\]
If $A$, $B$, and $C$ are discrete, and $\bar A$ and $\bar B$ are both left $C$-flat, then $A \amalg_{C} B$ is discrete as well, and equals the pushout in the category of discrete rings. If moreover
\begin{enumerate}
\item[-] $E$ is $\A^1$-invariant, or
\item[-] $E$ is $K$-theory and $C$ is right regular Noetherian,
\end{enumerate}
then $\Sigma \Nil E(C;M)$ vanishes.
\end{introthm}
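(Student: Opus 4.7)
The plan is to apply Theorem~\ref{ThmA} to the pushout square determined by $A \leftarrow C \to B$. This produces a $k$-algebra $\widetilde C$ together with a motivic pullback square
\[
\begin{tikzcd}
\widetilde C \ar[r] \ar[d] & A \ar[d] \\
B \ar[r] & A \amalg_C B,
\end{tikzcd}
\]
and a canonical map $C \to \widetilde C$ refining the original data. Proposition~\ref{prop:waldhausen-ring-is-sz} identifies $\widetilde C$ with the trivial square zero extension $C \oplus \Omega M$, where $M = \bar B \otimes_C \bar A$, and this identification equips $C \to \widetilde C$ with a canonical ring-level retraction $r \colon \widetilde C \to C$.

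Applying $E$ to the motivic pullback square and using stability to reinterpret it as a pushout yields
\[ E(A \amalg_C B) \simeq E(A) \oplus_{E(\widetilde C)} E(B). \]
Proposition~\ref{prop:twisted-nilpotent} provides the canonical splitting $E(\widetilde C) \simeq E(C) \oplus \Nil E(C;M)$ induced by $r$. The key compatibility to verify is that both maps $\widetilde C \to A$ and $\widetilde C \to B$ from the Theorem~\ref{ThmA} diagram factor through $r$, so that on $E$ they annihilate the Nil-summand. This factorization should follow because $A$ and $B$ are discrete, so any ring map from $\widetilde C$ to them is pinned down by its effect on $\pi_0 = C$, and the refined diagram of Theorem~\ref{ThmA} forces that effect to be the structural map. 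Granting this, a formal cofibre manipulation splits the pushout as
\[ E(A \amalg_C B) \simeq \bigl[ E(A) \oplus_{E(C)} E(B) \bigr] \oplus \Sigma \Nil E(C; M). \]

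For the discreteness assertion under left $C$-flatness of $\bar A$ and $\bar B$, observe first that these assumptions force both the derived tensor $A \otimes^{L}_C B$ and $M = \bar B \otimes^L_C \bar A$ to be discrete. The discreteness of the pushout $A \amalg_C B$ in $\Alg(k)$ itself should then be deduced from the bar or simplicial model for pushouts of associative algebras, whose terms under the flatness assumption are levelwise discrete, so that the realization agrees with the classical pushout of rings. I expect this handling of the associative pushout to be the most delicate technical step, as, unlike the $\E_\infty$-case, it is not simply a derived tensor product.

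The vanishing statements for $\Sigma \Nil E(C;M)$ rely on the interpretation of $\Nil E(C;M)$ via the $M$-twisted nilpotent endomorphism category of Proposition~\ref{prop:twisted-nilpotent}. If $E$ is $\A^1$-invariant, the polynomial-type construction underlying the twisted-nilpotent category collapses under the invariance axiom, forcing $\Nil E(C;M) = 0$. If $E = K$ and $C$ is right regular Noetherian, the vanishing reduces to the classical theorem that $K$-theory Nil-terms vanish on regular Noetherian rings, generalizing Bass--Heller--Swan.
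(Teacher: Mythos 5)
Your overall architecture is the same as the paper's: apply Theorem~\ref{ThmA}, identify $\widetilde C$ with $C\oplus\Omega M$ via Lemma~\ref{prop:waldhausen-ring-is-sz}, and split off $\Sigma\Nil E(C;M)$ by showing the two maps out of $E(\widetilde C)$ kill the Nil summand. The genuine gap is in your justification of the crucial factorization of $\widetilde C\to A$ and $\widetilde C\to B$ through the retraction $r\colon\widetilde C\to C$. First, the decomposition is asserted for arbitrary spans of pure embeddings of $k$-algebras, so you cannot invoke discreteness of $A$ and $B$ at this stage. Second, even granting discreteness, the principle you appeal to is false: $\widetilde C$ is coconnective, and a map of $\E_1$-rings from a coconnective ring to a discrete ring is \emph{not} determined by its effect on $\pi_0$ (already on underlying spectra, $[\widetilde C,A]$ receives contributions from $\Ext^1(\pi_{-1}\widetilde C,A)$ and beyond). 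The paper instead obtains the factorization from naturality of the $\odot$-construction: the $(C,C)$-linear retractions supplied by \emph{purity} assemble into the map of Milnor contexts \eqref{eq:comparison-cospan}, and Lemma~\ref{lemma:map-of-milnor-contexts} then produces a map of motivic pullback squares under which $C\oplus\Omega M\xrightarrow{\sim}\widetilde C$ and the structure maps to $A$ and $B$ visibly factor through $C$. Note that purity enters the argument precisely here, whereas your sketch never uses it at this step.

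Two secondary points. For discreteness of $A\amalg_C B$, a levelwise discrete simplicial model does not suffice as stated, since the realization of a levelwise discrete simplicial spectrum need not be discrete; the paper instead computes $A\amalg_C B\simeq A\otimes_{C\oplus\Omega M}B\simeq\bigoplus_{n\geq 0}A\otimes_C M^{\otimes_C^n}\otimes_C B$ via Lemma~\ref{lemma:tensor-over-sz} and concludes by flatness (Lemma~\ref{lemma:pushout-discrete}). For the vanishing statements, the $\A^1$-invariant case does not follow from a direct ``collapse'' of the twisted-nilpotent category: one needs Corollary~\ref{cor:Nil-vs-N} to trade $\Sigma\Nil E(C;M)$ for $NE(C;M)=\cof(E(C)\to E(T_C(M)))$ and then Weibel's graded-ring homotopy argument (Example~\ref{Ex:A1-invariant}), which uses that $T_C(M)$ is discrete, hence the flatness hypothesis. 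For $K$-theory the required input is not the classical Bass--Heller--Swan vanishing (which concerns untwisted Nil-terms) but the d\'evissage theorem of Burklund--Levy \cite{BL} applied to the coconnective ring $C\oplus\Omega M$; obtaining the vanishing of these twisted Nil-terms is essentially the hard part of Waldhausen's original work and cannot be cited away as classical.
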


We remark that the final part of this theorem in case of $K$-theory is an application of the result of Burklund and Levy about the $K$-theory of coconnective ring spectra mentioned above \cite{BL} and in fact generalizes, for connective $K$-theory, to regular coherent rings. Examples of $\A^{1}$-invariant localizing invariants include Weibel's homotopy $K$-theory as well as periodic cyclic homology over $\Q$-algebras. 

The above theorem shows that trivial square zero extensions appear naturally in the context of Waldhausen's work. In this situation, the bimodule sits in homological degree $-1$. Classical examples of trivial square zero extensions where the bimodule sits in homological degree $0$ are given by the ring of dual numbers, whose $K$-theory has been studied extensively in the literature \cite{MR291158,Soule,MR952571,BACH,HM}. Motivated by these cases, we are interested in descriptions of the value of localizing invariants on general trivial square zero extensions. In this direction we can offer the following result, which can be viewed as a vast generalization of the relation between $NK$-theory (the failure of $\A^{1}$-invariance of $K$-theory) and the $K$-theory of nilpotent endomorphisms, see e.g.\ \cite[Theorem 7.4 (a)]{Bass} and \cite[\S 3]{Grayson}.

\begin{introthm}\label{introthm:free-vs-sz}
Let $C$ be a $k$-algebra and $M$ a $(C,C)$-bimodule. Then there is a canonical commutative square of $k$-algebras
\[ \begin{tikzcd}
	C \oplus \Omega M \ar[r] \ar[d] & C \ar[d] \\
	C \ar[r] & T_C(M) \, .
\end{tikzcd}
\]
This square is a motivic pullback square.
\end{introthm}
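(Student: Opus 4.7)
The plan is to apply Theorem~\ref{ThmA} to the pushout square of $k$-algebras
\[
\begin{tikzcd}
T_C(\Omega M) \ar[r] \ar[d] & C \ar[d] \\
C \ar[r] & T_C(M)
\end{tikzcd}
\]
in which both arrows out of $T_C(\Omega M)$ are the augmentation $\epsilon$ (corresponding to the zero bimodule map $\Omega M \to C$) and both arrows into $T_C(M)$ are the unit. First I verify this is indeed a pushout in $\Alg(k)$. A $k$-algebra map from $C \amalg_{T_C(\Omega M)} C$ to a test algebra $D$ consists of two algebra maps $\phi_i \colon C \to D$ together with a homotopy between the induced maps $\phi_i \circ \epsilon$, which are each $(\phi_i, 0)$ on the $(C, \Omega M)$-components. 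Once $\phi_1 \simeq \phi_2 =: \phi$ is fixed, the remaining datum is a self-homotopy of the zero bimodule map in $\Map_{(C,C)}(\Omega M, D)$, equivalently an element of $\Map_{(C,C)}(\Sigma \Omega M, D) \simeq \Map_{(C,C)}(M, D)$. This matches the universal property of $T_C(M)$ relative to $C$.

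With the pushout identified, Theorem~\ref{ThmA} produces a middle $k$-algebra $\widetilde R$ whose underlying $k$-module is the pullback $C \times_{C \otimes^L_{T_C(\Omega M)} C} C$ along the canonical unit maps. Using the short free resolution $T_C(\Omega M) \otimes_C \Omega M \to T_C(\Omega M) \to C$ of the augmentation and tensoring with $C$, one obtains a cofiber sequence $\Omega M \to C \to C \otimes^L_{T_C(\Omega M)} C$ in which the first map is null. Hence $C \otimes^L_{T_C(\Omega M)} C \simeq C \oplus M$ with both canonical maps $C \to C \oplus M$ identified with the summand inclusion $c \mapsto (c,0)$, and the relevant pullback is therefore canonically $C \oplus \Omega M$ as a $k$-module.

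To upgrade this to the trivial square-zero extension as a $k$-algebra, I would invoke Theorem~\ref{ThmB}: the augmentation $T_C(\Omega M) \to C$ furnishes a tensorizer for the Milnor context $(C, C, C \oplus M)$ pointed at $(1, 0)$, since the induced map $C \otimes^L_{T_C(\Omega M)} C \to C \oplus M$ is the equivalence computed above. Consequently $\widetilde R$ coincides with the $k$-algebra $A = C \boxtimes_{C \oplus M} C$ produced by Theorem~\ref{thm:GLT} applied to this Milnor context. The main obstacle is then verifying directly that this $A$ is the trivial square-zero extension: this should follow from inspecting the explicit construction of $A$, using that the bimodule $C \oplus M$ is split off the unit of the $C$-summand, which forces the multiplication on the $\Omega M$-summand to vanish.
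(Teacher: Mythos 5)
Your overall architecture is the same as the paper's: you work with the Milnor context $(C,C,C\oplus M)$ pointed by the first summand, exhibit $T_C(\Omega M)$ (with twice the augmentation) as a tensorizer via the computation $C\otimes_{T_C(\Omega M)}C\simeq C\oplus M$, and identify the pushout $C\amalg_{T_C(\Omega M)}C$ with $T_C(M)$ using that $T_C(-)$ is a left adjoint. These steps are correct and correspond to the paper's Lemma~\ref{lem:tensor-product-over-tensor-alg} and the last paragraph of the proof of Theorem~\ref{thm:free-vs-sz}; whether one enters through Theorem~\ref{ThmA} applied to the pushout or through Theorem~\ref{thm:GLT} plus Theorem~\ref{ThmB} applied to the Milnor context makes no difference, since Theorem~\ref{ThmA} is proved by combining the other two.

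The genuine gap is your final step. Knowing that the underlying $k$-module of $\widetilde{R}=C\boxtimes_{C\oplus M}C$ is $C\oplus\Omega M$ does not identify it as the trivial square zero extension \emph{as a $k$-algebra}, and "inspecting the explicit construction\dots{} forces the multiplication on the $\Omega M$-summand to vanish" is not a proof in this setting: a trivial square zero extension is, by definition, the value of the right adjoint $\BMod(C,C)\to\Alg(k)_{/C}$, and exhibiting an $\E_1$-algebra as such requires a coherent identification, not the vanishing of a multiplication map on homotopy. The paper closes this gap formally in Lemma~\ref{lem:pullback-bimodule}: the cospan $C\to C\oplus M\leftarrow C$ is itself a cospan of $k$-algebras, being the image of $0\to M\leftarrow 0$ under the trivial square zero extension functor; hence by Remark~\ref{rem:comparison-LT1} the algebra $C\boxtimes_{C\oplus M}C$ is the pullback of this cospan in $\Alg(k)$, and since $C\oplus(-)$ is a right adjoint (and the forgetful functor $\Alg(k)_{/C}\to\Alg(k)$ preserves contractible limits), this pullback is $C\oplus(0\times_M 0)\simeq C\oplus\Omega M$ with its trivial square zero structure. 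You should replace your "inspection" step with this adjunction argument.
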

The above theorem is obtained by analyzing the span $C \leftarrow T_C(\Omega M) \to C$, whose structure maps do \emph{not} admit retractions and hence do not appear in Waldhausen's context.
Interestingly, 
the identification of the ring $\widetilde{C}$ in case of Waldhausen's pure embeddings as a trivial square zero extension rests upon a comparison to the case at hand. If $C$ is a right regular Noetherian discrete ring, and $M$ is discrete and left $C$-flat, then as indicated above, it follows from \cite{BL} that the map $K(C\oplus \Omega M) \to K(C)$ is an equivalence. Therefore, also the map $K(C) \to K(T_C(M))$ is an equivalence. This generalizes \cite[Theorems 3 \& 4]{Waldhausen1} which in turn extends earlier work of Gersten \cite[Corollary 3.9]{Gersten}.

As indicated above, the case of dual numbers over $C$, i.e.\ the case where $M=\Sigma C$ (and say $C$ is discrete) is now contained in Theorem~\ref{introthm:free-vs-sz}. Here, we thus obtain an equivalence between the relative $E$-theory of the dual numbers and that of $T_{C}(\Sigma C) = C[t]$ with $|t|=1$ up to a shift.

Finally, we discuss two very classical Milnor squares. First, we consider the two-dimensional coordinate axes over a ring $C$, whose $K$-theory has been studied in characteristic zero in \cite{GRW} and in positive characteristic in \cite{Hesselholt}. The Milnor context $(C[x], C[y], C)$ corresponding to the gluing of two axes along the origin admits $C[x,y]$ as a tensorizer. Using this we obtain the following result.
\begin{introthm}\label{introthmE}
There is a canonical motivic pullback square
\[ \begin{tikzcd}
	C[x,y]/(xy) \ar[r] \ar[d] & C[x] \ar[d] \\
	C[y] \ar[r] & C[t]
\end{tikzcd}\]
where $|t|=2$.
\end{introthm}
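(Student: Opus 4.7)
The plan is to apply Theorem~\ref{thm:GLT} and Theorem~\ref{ThmB} to the Milnor context $(A',B,M) = (C[x], C[y], C)$, where $M = C$ is pointed by its unit, so that the $(C[y],C[x])$-bimodule structure on $C$ is encoded by the augmentations $C[x]\to C$, $x\mapsto 0$, and $C[y]\to C$, $y\mapsto 0$. The candidate tensorizer is $A_0 = C[x,y]$, with the canonical maps $C[x,y]\to C[x]$ ($y\mapsto 0$) and $C[x,y]\to C[y]$ ($x\mapsto 0$). To verify the tensorizer condition, I would check that the induced map $C[y]\otimes_{C[x,y]} C[x] \to C$ is an equivalence: the length-one free resolution $C[x,y]\xrightarrow{\cdot x} C[x,y]$ of $C[y]$, tensored with $C[x] = C[x,y]/(y)$, yields the complex $C[x]\xrightarrow{\cdot x} C[x]$, whose only nonvanishing homology is $C$ in degree $0$.

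Theorem~\ref{thm:GLT} then produces a motivic pullback square with upper-left corner $A = C[x]\boxtimes_C C[y]$ and lower-right corner $C[x]\wtimes{A}{C}C[y]$. The underlying $(A,A)$-bimodule of $A$ is the pullback of the two surjections of discrete rings $C[x] \twoheadrightarrow C \twoheadleftarrow C[y]$, so $A$ is discrete; combined with the canonical $k$-algebra maps $A \to C[x]$ and $A \to C[y]$, this identifies $A$ with the pullback ring $C[x,y]/(xy)$. By Theorem~\ref{thm:GLT}, the underlying $(C[x],C[y])$-bimodule of the lower-right corner is $C[x]\otimes_{A} C[y]$, so the remaining task is to compute this derived tensor product over $A = C[x,y]/(xy)$.

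This Tor computation is where the main content of the argument lies. In $A = C[x,y]/(xy)$ a direct calculation gives $\ker(\cdot x) = (y) = yC[y]$ and $\ker(\cdot y) = (x) = xC[x]$, so $x$ and $y$ are non-regular and $C[y] = A/(x)$ only admits the $2$-periodic free resolution
\[ \cdots \to A \xrightarrow{\cdot y} A \xrightarrow{\cdot x} A \xrightarrow{\cdot y} A \xrightarrow{\cdot x} A \to C[y] \to 0. \]
Tensoring with $C[x] = A/(y)$ annihilates the $\cdot y$-differentials and leaves the injective multiplication by $x$ on $C[x]$, so the resulting homology is $C = C[x]/(x)$ in every even degree and zero in odd degrees. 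This identifies the underlying spectrum of $C[x]\wtimes{A}{C}C[y]$ as one with homotopy $C[t]$, $|t|=2$, completing the proof. The main obstacle is correctly setting up the periodic resolution and verifying the non-regular kernel identities above; once that is done, the identification of both corners with the stated rings and the motivicity of the pullback square assemble directly from Theorems~\ref{thm:GLT} and~\ref{ThmB}, with the pushout perspective of Theorem~\ref{ThmB} available as a conceptual cross-check that the same object is the pushout $C[x]\amalg_{C[x,y]} C[y]$ in $\Alg(k)$.
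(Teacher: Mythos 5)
Your setup---the Milnor context $(C[x],C[y],C)$ with tensorizer $C[x,y]$, the Koszul verification of the tensorizer condition, and the identification of $A=C[x]\boxtimes_C C[y]$ with $C[x,y]/(xy)$---matches the paper, and your $2$-periodic resolution correctly computes $\pi_*(C[x]\otimes_A C[y])$ as $C$ in every even degree, consistently with Proposition~\ref{prop:underlying-bimodule}. But the last step has a genuine gap: knowing the homotopy groups (or even the homotopy \emph{ring}) of the $\odot$-ring does not identify it as the ring spectrum $C[t]=T_C(\Sigma^2 C)$, i.e.\ the free $\E_1$-algebra on a degree-$2$ generator. The paper's Remark~\ref{rem:formality} makes exactly this point: the $\odot$-ring $\Z\sslash p$ of the arithmetic coordinate axes has homotopy ring $\F_p[t]$ with $|t|=2$, yet for odd $p$ it is \emph{not} equivalent to $\F_p[t]$ as a ring spectrum. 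So your Tor computation pins down the underlying $(C[x],C[y])$-bimodule of $C[x]\wtimes{A}{C}C[y]$ but cannot, by itself, determine its multiplicative structure; formality is precisely what has to be proved.

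That identification is where the real content lies, and it is the step you relegate to a ``conceptual cross-check.'' The paper's argument runs through it as the main line: by Theorem~\ref{ThmB} the $\odot$-ring is the pushout $\SS[x]\amalg_{\SS[x,y]}\SS[y]$ in $\Alg(\SS)$ (one works universally over the sphere); since $\SS[-]\colon \Mon_{\E_1}(\Spc)\to\Alg(\Sp)$ and group completion preserve pushouts, this is $\SS[\Omega(S^1\star S^1)]\simeq\SS[\Omega S^3]=\SS[t]$ with $|t|=2$, and the case of general $C$ follows by base change (Proposition~\ref{prop:tensor-product}). Working over $\SS$ first also repairs a secondary issue in your write-up: your resolutions treat $C$ as a discrete (commutative) ring, whereas the statement holds for an arbitrary ring spectrum $C$. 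To fix your proof, replace the final paragraph by an actual computation of the pushout $C[x]\amalg_{C[x,y]}C[y]$ of $\E_1$-algebras along the lines above; the Tor calculation then serves as a consistency check rather than the proof.
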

Therefore, calculations about the coordinate axes $C[x,y]/(xy)$ are essentially equivalent to calculations about $C[t]$ with $|t|=2$. In Proposition~\ref{eq:K-theory-of-homotopical-polynomial-algebras}, we give an independent calculation of the relative rational $K$-theory of $C[t]$ (in fact, for arbitrary $|t|>0$), thus providing an alternative proof of the results of Geller--Reid--Weibel. Let us briefly also mention the following observation in case of perfect fields $k$ of positive characteristic. In this case, we have that $\THH(k) \simeq k[t]$ with $|t|=2$ by B\"okstedt periodicity. 
Furthermore, the $K$-theory of $\THH(k)$ was recently calculated by Bay\i nd\i r and Moulinos \cite[Theorem 1.2]{BM}. This calculation, together with \cite{Hesselholt}, reveals that the relative $K$-theory of $k[t]$ agrees with the relative $K$-theory of $k[x,y]/(xy)$ up to a shift. Theorem~\ref{introthmE} is a conceptual explanation for this computational observation.

We end this introduction by discussing the Rim square, which was one of the motivating examples for Milnor \cite{Milnor}. Let $p$ be a prime, $C_{p}$ the cyclic group of order $p$, and $\zeta_{p}$ a primitive $p$-th root of unity. The Rim square
\[
\begin{tikzcd}
 \Z[C_{p}] \ar[d]\ar[r] & \Z[\zeta_{p}] \ar[d] \\ 
 \Z \ar[r] & \F_{p} 
\end{tikzcd}
\]
admits the polynomial ring $\Z[x]$ as a tensorizer, and we obtain that the associated $\odot$-ring is given by $\Z[\zeta_{p}]\sslash (\zeta_{p}-1)$, i.e.\ by freely dividing $\Z[\zeta_{p}]$ by $\zeta_{p}-1$ in $\Z$-algebras. Thus we have:

\begin{introthm}
Let $p$ be a prime number. Then there is a canonical motivic pullback square
\[ \begin{tikzcd}
	\Z[C_{p}] \ar[r] \ar[d] & \Z[\zeta_{p}] \ar[d] \\
	\Z \ar[r] & \Z[\zeta_{p}]\sslash (\zeta_{p}-1) .
\end{tikzcd}
\]
\end{introthm}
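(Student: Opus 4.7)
The plan is to apply Theorem~\ref{ThmB} to the Milnor context $(A',B,M) = (\Z, \Z[\zeta_p], \F_p)$ associated to the classical Rim pullback square, using $A_0 = \Z[x]$ as candidate tensorizer. First I would define the morphism of Milnor contexts $(\Z[x], \Z[x], \Z[x]) \to (\Z, \Z[\zeta_p], \F_p)$ via the algebra maps $\Z[x] \to \Z$ sending $x \mapsto 0$ and $\Z[x] \to \Z[\zeta_p]$ sending $x \mapsto \zeta_p - 1$. Both composites to $\F_p$ send $x$ to $0$ (using $\zeta_p \mapsto 1$ modulo $p$), so the required compatibility on the bimodule side is automatic.

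The crucial step is to verify the tensorizer condition, namely that the induced $(\Z[\zeta_p], \Z)$-bimodule map $\phi \colon \Z[\zeta_p] \otimes_{\Z[x]} \Z \to \F_p$ is an equivalence. I would compute the derived tensor product using the free resolution $\Z[x] \xrightarrow{\cdot x} \Z[x] \to \Z$, which is valid since $x$ is a non-zero-divisor in $\Z[x]$. Applying $\Z[\zeta_p] \otimes_{\Z[x]} -$ yields the two-term complex $\Z[\zeta_p] \xrightarrow{\cdot (\zeta_p - 1)} \Z[\zeta_p]$. Since $\Z[\zeta_p]$ is a domain, multiplication by $\zeta_p - 1$ is injective, so there is no higher $\Tor$ and $H_0 = \Z[\zeta_p]/(\zeta_p - 1)$. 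Using the presentation $\Z[\zeta_p] = \Z[t]/\Phi_p(t)$, one has $\Z[\zeta_p]/(\zeta_p - 1) = \Z[t]/(\Phi_p(t), t-1) = \Z/\Phi_p(1) = \Z/p = \F_p$, and unwinding identifies the resulting map with $\phi$.

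With the tensorizer verified, Theorem~\ref{thm:GLT} applied to $(\Z, \Z[\zeta_p], \F_p)$ produces a motivic pullback square whose upper-left corner $A$ has underlying spectrum $\Z \times_{\F_p} \Z[\zeta_p]$. Since the classical Rim square is a pullback of discrete rings, this identifies $A$ with $\Z[C_p]$ as $k$-algebras, with the canonical map $\Z[x] \to A$ sending $x$ to $g - 1$ for $g$ the generator of $C_p$. Theorem~\ref{ThmB} then identifies the bottom-right corner as the pushout $\Z \amalg_{\Z[x]} \Z[\zeta_p]$ in $\Alg(k)$. A direct universal-property argument identifies this pushout with $\Z[\zeta_p] \sslash (\zeta_p - 1)$: a map from the pushout into a $k$-algebra $R$ amounts to a $\Z[\zeta_p]$-algebra structure on $R$ together with a nullhomotopy witnessing the vanishing of $\zeta_p - 1$ in $R$, which is exactly the defining universal property of $\Z[\zeta_p] \sslash (\zeta_p - 1)$.

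The main obstacle I anticipate is the tensorizer verification: while the algebraic identities $\Phi_p(1) = p$ and the non-zero-divisor property of $\zeta_p - 1$ are classical, the essential point is that the derived tensor product is concentrated in degree zero, so that one genuinely compares honest discrete rings rather than $\pi_0$-truncations of something non-discrete. Once this input is in place, the result follows by a formal unwinding of Theorems~\ref{thm:GLT} and \ref{ThmB}.
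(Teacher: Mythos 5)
Your argument is correct, and it is essentially the proof sketched in the paper's introduction: exhibit $\Z[x]$, via $x\mapsto 0$ and $x\mapsto \zeta_p-1$, as a tensorizer for the Milnor context $(\Z,\Z[\zeta_p],\F_p)$, identify $A=\Z\times_{\F_p}\Z[\zeta_p]$ with $\Z[C_p]$ by Rim's classical computation (using Remark~\ref{rem:comparison-LT1} and the discreteness of the pullback, which holds since $\Z\to\F_p$ is surjective), and then read off the $\odot$-ring as the pushout $\Z\amalg_{\Z[x]}\Z[\zeta_p]=\Z[\zeta_p]\sslash(\zeta_p-1)$ from Theorem~\ref{ThmB}. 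Your Koszul-resolution verification that $\Z[\zeta_p]\otimes_{\Z[x]}\Z$ is discrete and equals $\F_p$ is exactly the required input; the essential points are that $\zeta_p-1$ is a non-zero-divisor and that $\Phi_p(1)=p$. The body of the paper (Example~\ref{ex:Rim-square}) reaches the same square by a different route: Proposition~\ref{prop:coordinate-axes-sphere} first shows that $\SS[x,y]$ is a tensorizer for $(\SS[x],\SS[y],\SS)$ with pushout $\SS[\Omega S^3]=\SS[t]$, $|t|=2$, and Lemma~\ref{cor:generalized-coordinate-axes} then base-changes along $\SS[x,y]\to\Z[u]$, $x\mapsto u-1$, $y\mapsto 1+u+\dots+u^{p-1}$ via Proposition~\ref{prop:base-change}, the regular-sequence hypothesis there playing the role of your non-zero-divisor argument. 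That detour buys a uniform treatment of the coordinate axes, the arithmetic examples, and the Rim square, and it additionally identifies the homotopy ring of the $\odot$-ring as $\F_p[t]$ with $|t|=2$, which is what makes the square computationally useful; your direct route is shorter and fully sufficient for the statement as given. One minor point: form the pushout in $\Alg(\Z)$ to match the paper's ``freely dividing by $\zeta_p-1$ in $\Z$-algebras''; since all three rings involved are discrete $\Z$-algebras, the relative bar constructions over $\SS$ and over $\Z$ agree, so nothing is lost.
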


Independently of our work, Krause and Nikolaus have constructed a motivic pullback square as in the above theorem with $\Z[\zeta_{p}]\sslash (\zeta_{p}-1)$ replaced by $\tau_{\geq 0}(\Z^{tC_{p}})$. Comparing their and our construction, it turns out that the $\Z$-algebras $\Z[\zeta_{p}]\sslash (\zeta_{p}-1)$ and $\tau_{\geq 0}(\Z^{tC_{p}})$ are equivalent, and that their and our motivic pullback squares agree. Since $\tau_{\geq 0}(\Z^{tC_{p}})$ is in turn equivalent to $\F_{p}[t]$ with $|t|=2$, whose $K$-theory is fully understood as explained above, one can infer knowledge about the $K$-theory of $\Z[C_{p}]$.

\subsection*{Acknowledgements}
We thank Robert Burklund and Ishan Levy for discussions about the $K$-theory of coconnective rings and the relation to our work and in particular the categorical formulation of Theorem~\ref{ThmB}. We thank Achim Krause and Thomas Nikolaus for sharing with us their motivic pullback square associated with the Rim square and further discussions about presentations of the $\odot$-ring. We furthermore thank the referee for their careful reading and helpful comments.

\subsection*{Conventions}
We fix an $\E_{2}$-algebra $k$ in spectra.
For a $k$-algebras $A$, $\RMod(A)$ denotes the category of right $A$-modules in $\Mod(k)$ and $\Perf(A) = \RMod(A)^\omega$ the category of perfect (equivalently compact) $A$-modules. For a second $k$-algebra $B$, we denote by $\BMod(B,A)$ the $\infty$-category of $(B,A')$-bimodules in $\Mod(k)$. We evaluate a localizing invariant $E$ on a $k$-algebra $A$ by setting $E(A) := E(\Perf(A))$. 

We make use of the Lurie tensor product on the $\infty$-category $\Pr^{\L}_{\st}$ of presentable and stable $\infty$-categories in the following way. By assumption $\Mod(k)$ is a (commutative, if $k$ is $\E_\infty$) algebra object in $\Pr^{\L}_{\st}$ and we write $\Pr^{\L}(k)$ for the closed (symmetric, if $k$ is $\E_\infty$) monoidal $\infty$-category of $\Mod(k)$-modules in $\Pr^{\L}_{\st}$, or in other words of $k$-linear presentable $\infty$-categories; see \cite[\S4]{HoyoisScherotzkeSibilla}. The internal hom objects are given by $\Fun^{\L}_{k}(-,-)$, the $\infty$-category of colimit preserving $k$-linear functors. If $k$ is $\E_\infty$, we write $\otimes_{k}$ for the relative tensor product $\otimes_{\Mod(k)}$.

\section{The $\odot$-construction for Milnor contexts}
	\label{sec:2}
As indicated in the introduction, the proof of Theorem~\ref{thm:GLT} follows closely the argument of \cite{LT}. Fundamental to this argument was the use of what are called lax pullbacks in \cite{Tamme:excision, LT}, which we choose to call oriented fibre products in this paper, as it is not a lax pullback in the technical sense (rather a partially lax pullback in the sense of \cite{LNP}). The main observation to extend the arguments of \cite{LT} to the present setup is that the construction of \cite{LT} in fact makes sense for arbitrary Milnor contexts in which the bimodule $M$ itself need not be a ring. 
To explain this in more detail, we first recall some constructions in oriented fibre products which will be used several times. 

\subsection{Complements on oriented fibre products}

Here we collect some facts about oriented fibre products. In the context of excision in algebraic $K$-theory, these were first used in \cite{Tamme:excision}, and we refer to \S1 of op.~cit.~for their basic properties. See also \cite[\S1]{LT}, \cite{BachmannCatMilnor} for discussions, and \cite{CDW} for a systematic treatment. Given a diagram of $\infty$-categories $\cA \overset{p}\to \cC \overset{q}\leftarrow \cB$, the \emph{oriented fibre product} $\cA \laxtimes{p,\cC, q} \cB$  is defined as the pullback of $\infty$-categories
\[
\begin{tikzcd}
 \cA \laxtimes{p,\cC, q} \cB \ar[d]\ar[r] & \Fun(\Delta^{1}, \cC) \ar[d] \\ 
 \cA \times \cB \ar[r, "p\times q"] & \cC\times \cC 
\end{tikzcd}
\]
where the right vertical map is the source-target map. This is also a \emph{partially lax limit} in the technical sense, where the morphism $q$ is marked, see \cite[Part 1.4]{LNP}.
Objects in $\cA \laxtimes{p,\cC,q} \cB$ are thus represented by triples $(X,Y,f)$ where $X \in \cA$, $Y\in \cB$, and $f\colon p(X) \to q(Y)$ is a morphism in $\cC$, and mapping spaces are computed by the pullback diagram
\[
\begin{tikzcd}
 \Map_{\slax}( (X,Y,f) , (X', Y', f') ) \ar[d]\ar[r] & \Map_{\cB}(Y,Y') \ar[d, "f^{*}\circ q"] \\ 
 \Map_{\cA}(X,X') \ar[r, "f'_{*}\circ p"] & \Map_{\cC}(p(X), q(Y') ) .
\end{tikzcd}
\]
If $\cA \overset{p}\to \cC \overset{q}\leftarrow \cB$ is a diagram of stable $\infty$-categories and exact functors, then the oriented fibre product is stable, and the mapping spectra obey the same formula.
For readability, we often drop $p$, $q$, or $\cC$ from the notation when these are clear from the context, but note that the orientation of the cospan is relevant in the construction (in other words, we need to keep track which of the two morphisms $p$ and $q$ is marked to form the partially lax limit).

Oriented fibre products are clearly functorial in functors between diagrams. Moreover, they are functorial in natural transformations in the following sense. 
\begin{lemma}\label{lemma:functoriality}
Given a natural transformation $\epsilon\colon p \to p'$, there is an induced functor
\[
\epsilon^{*}\colon \cA \laxtimes{p',\cC, q} \cB \lto \cA \laxtimes{p,\cC, q} \cB
\]
which on objects is given by 
$(X,Y, p'(X) \xrightarrow{f} q(Y)) \mapsto (X,Y, p(X) \xrightarrow{f\circ \epsilon(X)} q(Y))$. Similarly, a natural transformation $\delta\colon q \to q'$ induces a functor
\[
\delta_{*}\colon \cA \laxtimes{p,\cC, q} \cB \lto \cA \laxtimes{p,\cC, q'} \cB
\]
which on objects is given by $(X,Y, p(X) \xrightarrow{f} q(Y)) \mapsto (X,Y, p(X) \xrightarrow{\delta(Y) \circ f} q'(Y))$.
\end{lemma}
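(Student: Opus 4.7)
I plan to construct the functor $\epsilon^*$ using the universal property of the pullback defining the oriented fibre product. By this definition, producing a functor $\epsilon^*\colon \cA \laxtimes{p', \cC, q} \cB \to \cA \laxtimes{p, \cC, q} \cB$ amounts to specifying (i) a functor to $\cA \times \cB$, (ii) a functor to $\Fun(\Delta^1, \cC)$, and (iii) an equivalence in $\cC \times \cC$ between $p \times q$ applied to (i) and the source-target pair of (ii). For (i) I take the canonical projection.

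For (ii), note that the natural transformation $\epsilon\colon p \to p'$ is the same datum as a functor $\widetilde\epsilon\colon \cA \to \Fun(\Delta^1, \cC)$ whose evaluations at $0$ and $1$ recover $p$ and $p'$, respectively. Precomposing with the projection to $\cA$ out of $\cA \laxtimes{p', \cC, q} \cB$ yields a functor producing the arrow $\epsilon(X)\colon p(X) \to p'(X)$. Combined with the structural functor to $\Fun(\Delta^1, \cC)$, which produces the arrow $f\colon p'(X) \to q(Y)$, this assembles to a functor into the pullback $\Fun(\Delta^1, \cC) \times_{\cC} \Fun(\Delta^1, \cC) \simeq \Fun(\Lambda^2_1, \cC)$, where the pullback is formed along target-of-first and source-of-second. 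Composing, i.e.\ extending along the inner horn inclusion $\Lambda^2_1 \hookrightarrow \Delta^2$ (a trivial cofibration, so the induced map on functor categories to $\cC$ is a trivial fibration) and restricting along the long edge $\Delta^1 \to \Delta^2$, produces a functor to $\Fun(\Delta^1, \cC)$ whose value on an object $(X, Y, f)$ is the composite $f \circ \epsilon(X)\colon p(X) \to q(Y)$. The compatibility (iii) is built into the construction since the source and target of the composed arrow are $p(X)$ and $q(Y)$.

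The functor $\delta_*$ is constructed symmetrically: the natural transformation $\delta$ encodes a functor $\widetilde\delta\colon \cB \to \Fun(\Delta^1, \cC)$, and one forms $\delta(Y) \circ f$ by the analogous composition procedure, now pairing arrows at the other end along target-of-first equals source-of-second. I do not anticipate any genuine obstacle; the statement is a direct formal consequence of the pullback definition of the oriented fibre product together with the standard composition functor $\Fun(\Delta^1, \cC) \times_{\cC} \Fun(\Delta^1, \cC) \to \Fun(\Delta^1, \cC)$ in an $\infty$-category, and the only verification required is that the induced action on objects matches the stated formulas, which is immediate from unwinding the definitions.
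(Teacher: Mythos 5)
Your proof is correct, but it takes a genuinely different route from the one in the paper. You work directly with the defining pullback square of the oriented fibre product: you encode $\epsilon$ as a functor $\widetilde\epsilon\colon \cA \to \Fun(\Delta^1,\cC)$, pair it with the structural functor to $\Fun(\Delta^1,\cC)$ to land in $\Fun(\Delta^1,\cC)\times_\cC \Fun(\Delta^1,\cC)\simeq \Fun(\Lambda^2_1,\cC)$ (note the two evaluation maps are isofibrations, so this strict pullback is the right one), and then compose via a section of the trivial fibration $\Fun(\Delta^2,\cC)\to\Fun(\Lambda^2_1,\cC)$. This is a complete argument: the compatibility datum required by the universal property of the pullback is supplied by restriction of the $\Delta^2$-diagram to its endpoints, and the effect on objects is visible by construction. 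The paper instead reidentifies $\cA \laxtimes{p,\cC,q}\cB$ as the total space of the bifibration over $\cA\times\cB$ classified by $\Map_\cC(p(-),q(-))\colon \cA^{\op}\times\cB\to \Spc$, using unstraightening and the comparison of left fibrations with bifibrations; the transformations $\epsilon$ and $\delta$ then induce maps of classifying functors, hence of total spaces. The trade-off: the paper's route makes the functoriality conceptually transparent (both $\epsilon^*$ and $\delta_*$ are instances of one construction, and the induced effect on mapping spaces is immediate, which is used later), but it has to unwind the unstraightening equivalence to read off the formula on objects; your route is more hands-on and elementary, makes the object formula manifest, but treats the two cases by parallel ad hoc constructions and leans on a (contractible) choice of composition witness.
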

\begin{proof}
This follows from the following alternative description of the oriented fibre product. Lurie's unstraightening theorem for left fibrations combined with the identification \cite[Remark A.1.12]{HMS} of left fibrations over $\cA^{\op}\times \cB$ with bifibrations over $\cA \times \cB$   provide canonical equivalences
\[ 
\Fun(\cA^\op\times \cB,\mathrm{Spc}) \simeq \mathrm{LFib}_{/\cA^{\op} \times \cB} \simeq \mathrm{BiFib}_{/\cA\times \cB}.
\]
The $\infty$-category $\cA \laxtimes{p,\cC,q} \cB$ is then given by taking the total space of the bifibration associated to the functor $\Map_{\cC}(p(-),q(-)) \colon \cA^\op \times \cB \to \mathrm{Spc}$ by the above equivalences, because the bifibration associated to the functor $\Map_\cC(-,-)$ is the source-target map $\Fun(\Delta^1,\cC) \to \cC \times \cC$. Clearly, the natural transformations $\epsilon$ and $\delta$ induce maps in $\Fun(\cA^\op\times \cB,\mathrm{Spc})$, hence also maps between the corresponding bifibrations. On total spaces, these give the desired functors. The effect on objects follows from the explicit description of the effect of the unstraightening equivalence on morphisms.
\end{proof}

\begin{lemma}\label{lemma:oriented-pb-adjunction}
In the above situation, assume that the functor $p \colon \cA \to \cC$ admits a right adjoint $r \colon \cC \to \cA$. Then there is a canonical equivalence 
\[
\cA \laxtimes{p, \cC, q} \cB \overset{\simeq}\lto \cA \laxtimes{\id,\cA,r q} \cB
\]
given on objects by $(X,Y, p(X) \xrightarrow{f} q(Y)) \mapsto (X,Y, X \xrightarrow{\hat f} rq(Y))$, where $\hat f$ is the adjoint morphism of $f$.
Similarly, if $q$ admits a left adjoint $s \colon \cC \to \cB$, then there is a canonical equivalence
\[
\cA \laxtimes{p, \cC, q} \cB \overset{\simeq}\lto \cA \laxtimes{sp, \cB, \id} \cB
\]
given on objects by $(X,Y, p(X) \xrightarrow{f} q(Y)) \mapsto (X,Y, sp(X) \xrightarrow{\hat f} Y)$, where $\hat f$ is the adjoint morphism of $f$.
\end{lemma}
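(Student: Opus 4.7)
The plan is to exploit the unstraightening description of the oriented fibre product recalled during the proof of Lemma~\ref{lemma:functoriality}: the $\infty$-category $\cA \laxtimes{p,\cC,q} \cB$ is, up to canonical equivalence, the total space of the bifibration over $\cA \times \cB$ classifying the mapping space functor $\Map_{\cC}(p(-),q(-))\colon \cA^{\op}\times\cB \to \Spc$. Thus constructing an equivalence of oriented fibre products is reduced to exhibiting a natural equivalence between the relevant mapping space functors and unstraightening it.

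For the first statement, I would use the adjunction $p \dashv r$, which supplies a natural equivalence
\[
\Map_{\cC}(p(X),q(Y)) \overset{\simeq}\lto \Map_{\cA}(X,rq(Y))
\]
of functors $\cA^{\op}\times\cB \to \Spc$, sending a morphism $f$ to its mate $\hat f$. The target is by definition the mapping space functor classifying $\cA \laxtimes{\id,\cA, rq} \cB$. Unstraightening this natural equivalence then produces an equivalence of the corresponding bifibrations, hence an equivalence of the total spaces, which is the desired equivalence of oriented fibre products. The claimed formula on objects comes directly from tracing through how the unstraightening equivalence encodes a point of a mapping space as an object of the total space, in exactly the same manner as in the description of the action on morphisms at the end of the proof of Lemma~\ref{lemma:functoriality}.

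The second statement is formally dual: starting from the natural equivalence $\Map_{\cC}(p(X),q(Y)) \simeq \Map_{\cB}(sp(X), Y)$ of functors $\cA^{\op}\times\cB \to \Spc$ supplied by $s\dashv q$, the same unstraightening argument yields the claimed equivalence with the prescribed action on objects.

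The only non-formal ingredient is the identification of the oriented fibre product with the unstraightening of its mapping space functor, which has already been secured in the proof of Lemma~\ref{lemma:functoriality}. The remaining issue — and the main potential obstacle — is bookkeeping: one needs to know that this identification is sufficiently natural that a natural equivalence of mapping space functors really lifts to an equivalence of total spaces implementing the adjoint correspondence on objects. Given the bifibration picture already deployed above, this is essentially immediate.
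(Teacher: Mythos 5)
Your proof is correct, but it takes a different route from the paper's. The paper builds the equivalence as an explicit composite of two functors: first the functor $\cA \laxtimes{p,\cC,q}\cB \to \cA\laxtimes{rp,\cA,rq}\cB$ induced by applying $r$ to the middle of the cospan (functoriality of the oriented fibre product in commutative diagrams of functors), then the functor $\cA\laxtimes{rp,\cA,rq}\cB \to \cA\laxtimes{\id,\cA,rq}\cB$ obtained from Lemma~\ref{lemma:functoriality} applied to the unit $\eta\colon \id \to rp$; it then checks essential surjectivity directly and full faithfulness via the pullback formula for mapping spaces. You instead stay entirely on the straightened side: the adjunction $p\dashv r$ gives a natural equivalence $\Map_\cC(p(-),q(-)) \simeq \Map_\cA(-,rq(-))$ of functors $\cA^{\op}\times\cB \to \Spc$, and unstraightening an equivalence of functors yields an equivalence of the corresponding bifibrations, hence of total spaces. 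Both arguments rest on the same bifibration identification of the oriented fibre product (which the paper only makes explicit inside the proof of Lemma~\ref{lemma:functoriality}), so your approach is a legitimate short-circuit: you get fully faithfulness and essential surjectivity for free, at the cost of having to trace through the unstraightening equivalence to confirm the stated formula on objects, which is precisely the step the paper's explicit composite makes transparent. The dual argument for $s\dashv q$ goes through identically in both treatments.
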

\begin{proof}
We only prove the first statement, the second being entirely analogous.
The canonical commutative diagram of functors 
\[ \begin{tikzcd}
	\cA \ar[r,"p"] \ar[d, equal] & \cC  \ar[r] \ar[d,"r"] & \cB \ar[l,"q"'] \ar[d,equal] \\
	\cA \ar[r,"rp"] & \cA & \cB \ar[l,"rq"']
\end{tikzcd}\]
and the unit of the adjunction $\eta\colon \id \to rp$, by Lemma~\ref{lemma:functoriality}, induce the functors 
\[ \cA \laxtimes{p,\cC,q} \cB \lto \cA \laxtimes{rp,\cA,rq} \cB \lto \cA \laxtimes{\id,\cA,rq} \cB,\]
respectively. The composite has the prescribed effect on objects. It follows immediately that this composite functor is essentially surjective, and using the above pullback formula for mapping spaces one obtains that it is also fully faithful.
\end{proof}

We will also need the following slight extension of \cite[Prop.~13]{Tamme:excision}.

\begin{lemma}\label{lemma:compact-generation}
Let $\cA \overset{p}\to \cC \overset{q}\leftarrow \cB$ be a diagram of stable $\infty$-categories. Assume that $\cA$ and $\cB$ are compactly generated presentable, $\cC$ is cocomplete, $p$ and $q$ preserve all small colimits, and $p$ preserves compact objects. Then $\cA \laxtimes{\cC} \cB$ is a compactly generated stable $\infty$-category, and its subcategory of compact objects coincides with $\cA^{\omega} \laxtimes{\cC} \cB^{\omega}$.
\end{lemma}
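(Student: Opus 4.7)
The plan is to verify the standard criteria for compact generation in three steps: (i) filtered colimits in $\cA \laxtimes{\cC} \cB$ are computed componentwise; (ii) every object $(X,Y,f)$ with $X \in \cA^{\omega}$ and $Y \in \cB^{\omega}$ is compact; and (iii) a suitable set of such objects generates $\cA \laxtimes{\cC} \cB$ under colimits. Step (i) is immediate: the oriented fibre product is a pullback in $\Cat_\infty$, $\cC$ admits small colimits, and both $p$ and $q$ preserve them, so
\[
\colim_i (X_i, Y_i, f_i) \;=\; (\colim_i X_i,\ \colim_i Y_i,\ \colim_i f_i),
\]
where the third entry is well-defined because $p$ and $q$ commute with the colimit.

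For step (ii), I will use the pullback formula for mapping spectra from the introduction of Section~\ref{sec:2} to express
\[
\map_{\slax}\bigl((X,Y,f),\ \colim_i(X_i, Y_i, f_i)\bigr) \;=\; \map_\cA(X, \colim_i X_i) \times_{\map_\cC(p(X), \colim_i q(Y_i))} \map_\cB(Y, \colim_i Y_i).
\]
Compactness of $X \in \cA$ and $Y \in \cB$ handles the outer two mapping spectra, while the middle term requires $p(X) \in \cC^\omega$ --- which is exactly where the hypothesis that $p$ preserves compact objects enters. Crucially, no analogous hypothesis on $q$ is required, owing to the asymmetry of the formula; this is the conceptual point allowing the extension of \cite[Prop.~13]{Tamme:excision}. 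Since finite limits of spectra commute with filtered colimits, I conclude that $(X,Y,f)$ is compact.

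For step (iii), the key observation is the canonical cofibre sequence
\[ (0,\ Y,\ 0) \lto (X,\ Y,\ f) \lto (X,\ 0,\ 0) \]
in $\cA \laxtimes{\cC} \cB$, arising as the cofibre of the map $(0 \to X,\ \id_Y)$, whose existence is justified by the fact that the structure square trivially commutes and whose cofibre is computed via (i). Picking compact generators $\{X_\alpha\} \subseteq \cA^{\omega}$ and $\{Y_\beta\} \subseteq \cB^{\omega}$, the objects $(X_\alpha, 0, 0)$ and $(0, Y_\beta, 0)$ are compact by (ii), using that $p(0) = 0$ is compact in $\cC$. Writing arbitrary $X$ and $Y$ as colimits of these generators and using (i), the above cofibre sequence realises every $(X, Y, f)$ as a colimit of objects of these two forms, so this set generates $\cA \laxtimes{\cC} \cB$. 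Finally, $\cA^\omega \laxtimes{\cC} \cB^\omega$ is closed in $\cA \laxtimes{\cC} \cB$ under finite colimits and retracts (inherited from $\cA^\omega$ and $\cB^\omega$, again invoking that $p$ preserves compact objects for the third entry), and it contains the generators; hence the standard identification of compact objects in a compactly generated stable $\infty$-category with the idempotent completion of any set of compact generators yields $(\cA \laxtimes{\cC} \cB)^\omega = \cA^\omega \laxtimes{\cC} \cB^\omega$. The main obstacle is step (ii), namely the bookkeeping needed to see that the asymmetric hypothesis on $p$ matches the asymmetry of the mapping spectrum formula.
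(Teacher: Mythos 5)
Your proof is correct and follows essentially the same route as the paper's: componentwise colimits, compactness of $(X,Y,f)$ via the pullback formula for mapping spectra combined with the hypothesis that $p$ preserves compact objects, and generation via the cofibre sequence $(0,Y,0)\to(X,Y,f)\to(X,0,0)$. The only cosmetic difference is that the paper packages the generation step as essential surjectivity of $\Ind(\cA^{\omega}\laxtimes{\cC}\cB^{\omega})\hookrightarrow\cA\laxtimes{\cC}\cB$ rather than arguing directly with a set of compact generators, and it cites stability of the oriented fibre product from the literature, which you leave implicit.
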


\begin{proof}
Stability is proven in \cite[Lemma 8]{Tamme:excision}. 
As $p$ and $q$ preserve colimits, colimits in $\cA \laxtimes{\cC} \cB$ are computed component-wise. As moreover $\cA$, $\cB$, and $\cC$ are cocomplete, so is $\cA \laxtimes{\cC} \cB$ (loc.~cit.).
Now let $(X,Y,f)$ be an object of $\cA^{\omega} \laxtimes{\cC} \cB^{\omega}$. We claim that $(X,Y,f)$ is compact in $\cA \laxtimes{\cC} \cB$. Note that the functor corepresented by $(X,Y,f)$ sits in a pullback square
\[
\begin{tikzcd}
 \Map_{\slax}( (X,Y,f), -)  \ar[d]\ar[r] & \Map_{\cB}(Y, -) \ar[d] \\ 
 \Map_{\cA}(X,-) \ar[r] & \Map_{\cC}(p(X), q(-)) .
\end{tikzcd}
\]
As filtered colimits in spaces distribute over pullbacks, the assumptions now imply that $(X,Y,f)$ is compact.
We conclude that the inclusion $\cA^{\omega} \laxtimes{\cC} \cB^{\omega} \hookrightarrow \cA \laxtimes{\cC} \cB$ extends to a fully faithful functor 
\[
\Ind(\cA^{\omega} \laxtimes{\cC} \cB^{\omega}) \hookrightarrow \cA \laxtimes{\cC} \cB.
\]
As by assumption $\cA$ and $\cB$ are compactly generated, it follows that its essential image contains $\cA$ and $\cB$ viewed as full subcategories via the  canonical inclusions $\cA \hookrightarrow \cA \laxtimes{\cC} \cB$ and $\cB \hookrightarrow \cA \laxtimes{\cC} \cB$. As every object in $\cA \laxtimes{\cC} \cB$ is an extension of an object of $\cA$ by an object of $\cB$ and the essential image is a stable subcategory, it follows that it has to be all of $\cA \laxtimes{\cC} \cB$. Thus the latter is compactly generated. As $\cA^{\omega} \laxtimes{\cC} \cB^{\omega}$ is idempotent complete, it follows that $(\cA \laxtimes{\cC} \cB)^{\omega} = \cA^{\omega} \laxtimes{\cC} \cB^{\omega}$.
\end{proof}

\subsection{The $\odot$-theorem for Milnor contexts}
	\label{sec:bimodule-construction}
In this subsection, we prove Theorem~\ref{thm:GLT}.
We recall that a Milnor context consists of a triple $(A',B,M)$ where $A'$ and $B$ are $k$-algebras, and $M$ is a pointed $(B,A')$-bimodule in $\Mod(k)$. The basepoint of $M$ can be described by the following equivalent data:
\begin{enumerate}
\item A $k$-linear map $k \to M$,
\item a right $A'$-linear map $A' \to M$,
\item a left $B$-linear map $B \to M$, or
\item a $(B,A')$-bimodule map $B\otimes_k A' \to M$.
\end{enumerate}

Moreover, we note that the datum of the bimodule $M$ is equivalent to the datum of a colimit preserving and $k$-linear functor $M\colon \RMod(B) \to \RMod(A')$; more precisely, the functor 
\[\begin{tikzcd}[row sep=tiny]
	\BMod(B,A') \ar[r] & \Fun^{\L,k}(\RMod(B),\RMod(A')) \\ M \ar[r,mapsto] & - \otimes_B M 
\end{tikzcd}\]
is an equivalence of categories, where the superscript on the right hand side refers to $k$-linear and colimit preserving functors \cite[Remark 4.8.4.9]{HA}.
We then consider the oriented fibre product
\[
\RMod(A') \laxtimes{M} \RMod(B) := \RMod(A') \laxtimes{\id,\RMod(A'),M} \RMod(B)
\]
whose objects, as we recall, consist of triples $(X,Y,f)$ with $X \in \RMod(A')$, $Y\in \RMod(B)$, and $f\colon X \to Y\otimes_B M$ a map of $A'$-modules.

\begin{rem}
Consider the Milnor context $(A',B,B')$ induced by ring maps $A' \to B' \leftarrow B$. Then by Lemma~\ref{lemma:oriented-pb-adjunction}, the oriented fibre product $\RMod(A') \laxtimes{B'} \RMod(B)$ is canonically equivalent to the one used in \cite{LT} and denoted by $\RMod(A') \laxtimes{\RMod(B')} \RMod(B)$ there. More precisely, in \cite{LT} we used an oriented fibre product of perfect modules, but also these versions are canonically equivalent by Lemma~\ref{lemma:compact-generation}.
\end{rem}

The construction of the oriented fibre product above does not make use of the basepoint $m$ of the bimodule $M$. We utilize $m$ in the following way.
\begin{construction}\label{dfn:lambda-and-A}
We let $\Lambda = (A',B, A' \stackrel{m}{\to} M)$ be the canonical object of $\RMod(A') \laxtimes{M} \RMod(B)$ associated to the base point of $M$. Moreover, we let $A = A' \boxtimes_M B$ be the $k$-algebra 
\[
A := \End_{\slax}(\Lambda).
\] 
\end{construction}
It follows from Lemma~\ref{lemma:compact-generation} that $\Lambda$ is compact. Hence, by the Schwede--Shipley theorem \cite[Theorem 7.1.2.1]{HA} we obtain a fully faithful embedding
\[ 
i \colon \RMod(A) \lto \RMod(A') \laxtimes{M} \RMod(B)
\]
sending $A$ to $\Lambda$. The projections from the oriented fibre product to $\RMod(A')$ and $\RMod(B)$ send $\Lambda$ to $A$ and $B$ respectively. There are therefore canonical $k$-algebra maps $A \to A'$, $A\to B$ such that  the composite functors from $\RMod(A)$ to $\RMod(A')$ and $\RMod(B)$ identify with the corresponding base change functors. In particular, $A'$ and $B$ are endowed with $(A,A)$-bimodule structures. By restriction, $M$ also becomes an $(A,A)$-bimodule.

\begin{rem}\label{rem:comparison-LT1}
For a Milnor context $(A',B,B')$ associated to ring maps $A' \to B' \leftarrow B$, it follows from \cite[Lemma~1.7]{LT} that the $k$-algebra $A=A'\boxtimes_M B$ defined above is in fact simply the pullback $A' \times_{B'} B$ (which is canonically a $k$-algebra). 
\end{rem}

In general, the underlying $(A,A)$-bimodule of $A$ can always be described as a pullback as follows. This proves the second assertion of Theorem~\ref{thm:GLT}.

\begin{lemma}\label{lemma:pullback-bimodule-ring}
There is a canonical pullback diagram
\[\begin{tikzcd}
	A \ar[r] \ar[d] & B \ar[d,"m"] \\
	A' \ar[r,"m"] & M
\end{tikzcd}\]
of $(A,A)$-bimodules.
\end{lemma}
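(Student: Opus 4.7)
The plan is to prove the lemma by a direct computation of the mapping spectrum $A = \End_{\slax}(\Lambda)$ via the pullback formula recalled above, and then to track the $(A,A)$-bimodule structure via naturality.

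First, I would specialize the mapping space pullback square for the oriented fibre product, enhanced to mapping spectra in the stable context, to the case $(X,Y,f) = (X',Y',f') = \Lambda = (A', B, m)$ with $p = \id$ and $q = -\otimes_B M$. This immediately yields a pullback square of spectra
\[
\begin{tikzcd}
\End_{\slax}(\Lambda) \ar[r] \ar[d] & \Map_{B}(B,B) \ar[d] \\
\Map_{A'}(A',A') \ar[r] & \Map_{A'}(A', B\otimes_B M),
\end{tikzcd}
\]
in which the bottom horizontal map is postcomposition with $m\colon A' \to M$, and the right vertical map sends $\beta$ to $(\beta\otimes_B M)\circ m$.

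Next I would apply the standard identifications $\Map_{A'}(A', A') \simeq A'$, $\Map_B(B,B) \simeq B$, and $\Map_{A'}(A', B\otimes_B M) \simeq M$ given by evaluation at the unit (plus $B\otimes_B M \simeq M$). Under these, the bottom and right maps become, respectively, the right $A'$-linear map $A' \to M$ and the left $B$-linear map $B \to M$ both determined by the basepoint $m \in M$; these are precisely the maps labelled $m$ in the statement. This proves the lemma at the level of underlying $k$-modules.

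To upgrade the square to a pullback of $(A,A)$-bimodules, I would invoke naturality of the mapping spectrum pullback formula in both variables, which canonically endows each corner with an $(A,A) = (\End_{\slax}(\Lambda), \End_{\slax}(\Lambda))$-action, and makes the whole square one of $(A,A)$-bimodules. The identifications of the previous paragraph are natural in $\Lambda$ via the projections $\RMod(A')\laxtimes{M}\RMod(B) \to \RMod(A')$ and $\RMod(B)$, which are exact $k$-linear functors inducing the canonical ring maps $A \to A'$ and $A \to B$; so on the corners $A'$, $B$, and (by composition) $M$, the induced $(A,A)$-bimodule structures agree with the ones obtained by restriction along these ring maps. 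Since the forgetful functor $\BMod(A,A) \to \Mod(k)$ is conservative and limit-preserving, a $k$-module pullback with $(A,A)$-bilinear maps is automatically a pullback in $\BMod(A,A)$, which finishes the proof.

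The only real subtlety, and what I expect to be the main point requiring care, is bookkeeping: verifying that the evaluation-at-unit identifications really are natural in $\Lambda$ so as to produce the correct compatible $(A,A)$-bimodule structures on $A'$, $B$, and $M$, rather than merely a pullback of $k$-modules that happens to consist of $(A,A)$-bimodules.
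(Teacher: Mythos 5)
Your proposal is correct and follows essentially the same route as the paper: specialize the mapping-spectrum pullback formula for the oriented fibre product to $\Lambda=(A',B,m)$, identify the corners with $A'$, $B$, and $M$, and observe that the resulting square is one of $(A,A)$-bimodules. The paper's proof is exactly this, stated more briefly; your extra care about the naturality of the evaluation-at-unit identifications is a reasonable elaboration of what the paper leaves implicit.
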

\begin{proof}
By the formula for mapping spectra in oriented fibre products, the following diagram is a pullback. 
\[\begin{tikzcd}
	\map_{\laxtimes{}}(\Lambda,\Lambda) \ar[r] \ar[d] & \map_B(B,B) \ar[d] \\
	\map_{A'}(A',A') \ar[r,"m_*"] & \map_{A'}(A',M)
\end{tikzcd}\]
Here the right vertical map is given by the composite
\[ 
\map_B(B,B) \overset{(-)\otimes_{B}M}\lto \map_{A'}(M,M) \stackrel{m^*}{\lto} \map_{A'}(A',M).
\] 
The above diagram is a commutative diagram of $(A,A)$-bimodules, and it identifies canonically with the diagram in the statement.
\end{proof}

We now construct the $k$-algebra $A' \wtimes{A}{M} B$, the commutative diagram \eqref{square}, and prove that it is a motivic pullback square.

\begin{construction}\label{constr:Q}
We let $\cQ$ be the Verdier quotient of $\RMod(A')\laxtimes{M} \RMod(B)$ by the full subcategory $\RMod(A)$. Denoting by $p$ the projection functor to $\cQ$,
we let $A' \wtimes{A}{M} B$ be the $k$-algebra 
\[
A' \wtimes{A}{M} B := \End_\cQ(p(0,B,0)).
\]
Note that $p(0,B,0)$ is a compact object in $\cQ$.
\end{construction}

\begin{prop}\label{prop:SS}
As a cocomplete $k$-linear $\infty$-category, $\cQ$ is generated by $p(0,B,0)$. In particular, by the Schwede--Shipley theorem, $\cQ$ is equivalent to $\RMod(A'\wtimes{A}{M} B)$.
\end{prop}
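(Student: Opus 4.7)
The plan is to build $\cQ$ out of $p(0,B,0)$ using two observations: every object of the oriented fibre product decomposes, via a natural fibre sequence, into its ``$A'$-part'' and ``$B$-part''; and the object $\Lambda$ itself participates in such a decomposition, which becomes trivial after applying $p$.

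More concretely, I would first record that limits (in particular fibres) and colimits in $\cC := \RMod(A') \laxtimes{M} \RMod(B)$ are computed componentwise, as already used in Lemma~\ref{lemma:compact-generation}. Consequently, for any object $(X,Y,f)$ there is a natural fibre sequence
\[ (0, Y, 0) \lto (X, Y, f) \lto (X, 0, 0) \]
in $\cC$: the first component of the fibre vanishes because $\id_X$ is an equivalence, and the transition datum for the fibre is forced to be zero since $0$ is initial in $\RMod(A')$. Specialising this to $\Lambda = (A', B, m)$ gives the fibre sequence $(0,B,0) \to \Lambda \to (A',0,0)$; since $p(\Lambda) \simeq 0$ by construction of $\cQ$, this yields a canonical equivalence $p(A', 0, 0) \simeq \Sigma p(0, B, 0)$ in $\cQ$.

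Next, I would exploit that the inclusions $X \mapsto (X,0,0)$ and $Y \mapsto (0,Y,0)$ preserve all colimits (again by the componentwise computation), so that, composed with the colimit-preserving Verdier projection $p$, the images of the generators $A'$ and $B$ generate the images of all $(X,0,0)$ and all $(0,Y,0)$, respectively, under colimits and shifts. Combined with the natural fibre sequence above, every $p(X,Y,f)$ lies in the localizing subcategory of $\cQ$ generated by $\{p(A',0,0), p(0,B,0)\}$, which, by the equivalence $p(A',0,0) \simeq \Sigma p(0,B,0)$, collapses to the single generator $p(0,B,0)$.

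For the final assertion, I would verify that $p(0,B,0)$ is compact in $\cQ$: Lemma~\ref{lemma:compact-generation} shows that $(0,B,0)$ is compact in $\cC$, and $\RMod(A) \subset \cC$ is generated by the compact object $\Lambda$, so by the standard theory of Verdier quotients (localisations by compactly generated subcategories) the projection $p$ preserves compact objects and $\cQ$ remains presentable stable $k$-linear. The Schwede--Shipley theorem then gives $\cQ \simeq \RMod(\End_\cQ(p(0,B,0))) = \RMod(A' \wtimes{A}{M} B)$. I expect the only slightly delicate point to be the bookkeeping around compactness of $p(0,B,0)$ and preservation of compact objects by $p$, but this is a formal consequence of the compactness of $\Lambda$.
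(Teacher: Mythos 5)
Your proof is correct and follows essentially the same route as the paper's: the paper likewise observes that the oriented fibre product is generated by $(A',0,0)$ and $(0,B,0)$ and uses the fibre sequence $(0,B,0) \to \Lambda \to (A',0,0)$ together with $p(\Lambda)\simeq 0$ to identify $p(0,B,0)$ with $\Omega p(A',0,0)$. You merely spell out the generation claim and the compactness bookkeeping (already recorded in Lemma~\ref{lemma:compact-generation} and Construction~\ref{constr:Q}) in more detail.
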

\begin{proof}
The oriented fibre product is generated by the two objects $(A',0,0)$ and $(0,B,0)$. The fibre sequence
\[
(0,B,0) \lto (A', B, m) \lto (A',0,0)
\]
shows that  $p(0,B,0) \simeq \Omega p(A',0,0)$. This implies the proposition.
\end{proof}

We write $i_1 \colon \RMod(A) \to \RMod(A')$ and $i_2 \colon \RMod(A) \to \RMod(B)$ for the respective extension of scalars functors and 
\begin{align*}
&j_{1}\colon \RMod(A') \to \RMod(A') \laxtimes{M} \RMod(B), \\
&j_{2}\colon \RMod(B) \to \RMod(A') \laxtimes{M} \RMod(B)
\end{align*}
for the respective inclusion functors. There is then a natural transformation $\tau\colon \Omega j_{1} i_{1} \Rightarrow j_{2} i_{2}$ of functors induced by the canonical fibre sequence
\[ 
(0,Y,0) \lto (X,Y,f) \lto (X,0,0) 
\]
in $\RMod(A')\laxtimes{M} \RMod(B)$.
Consider the following diagram. 
\[
\begin{tikzcd}
	\RMod(A) \ar[r, "i_{2}"] \ar[d, "i_{1}"'] & \RMod(B) \ar[d,"j_{2}"] \ar[ddr, bend left, "k_{2}"] & \\
	\RMod(A') \ar[r,"\Omega j_{1}"] \ar[drr, bend right=15, "k_{1}"'] \arrow[ur, Rightarrow, shorten >= 2em, shorten <= 1em,"\tau"] & \RMod(A') \laxtimes{M} \RMod(B) \ar[dr,"p"] & \\
	&& \cQ
\end{tikzcd}
\]
The functors $k_1$ and $k_2$ are defined so that the outer triangles commute. The transformation $p(\tau)$ is an equivalence, so the outer part of the above diagram commutes. 
Since $k_{1}i_{1}(A) \simeq p(0,B,0) = k_{2}i_{2}(A)$ via $p(\tau)$, the functors in the outer diagram respect the preferred generators. Passing to endomorphism rings, we obtain the commutative diagram \eqref{square} of Theorem~\ref{thm:GLT}.
Arguing exactly as in \cite[p.~888]{LT}, it is a motivic pullback square.
\medskip

To finish the proof of Theorem~\ref{thm:GLT}, it now remains to identify the $(A',B)$-bimodule underlying the $k$-algebra $A' \wtimes{A}{M} B$. 

\begin{prop}\label{prop:underlying-bimodule}
Let $(A',B,M)$ be a Milnor context. Then the $k$-algebra map $A \to A' \wtimes{A}{M} B$ extends uniquely to an $(A',B)$-bimodule map $A' \otimes_A B \to A' \wtimes{A}{M} B$. This map is an equivalence. Under this equivalence, the ring maps $A' \to A' \wtimes{A}{M} B$ and $B \to A' \wtimes{A}{M} B$ correspond to the canonical maps $A' \to A' \otimes_{A} B$ and $B \to A' \otimes_{A} B$, respectively.
\end{prop}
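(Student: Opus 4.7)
My plan splits the proof into three parts matching the three assertions. First, I construct $\phi$ using the universal property of base change for bimodules: the restriction-of-scalars functor $\BMod(A',B) \to \BMod(A,A)$ along the ring maps $A \to A'$ and $A \to B$ admits a left adjoint $N \mapsto A' \otimes_A N \otimes_A B$, whose value at $N = A$ is $A' \otimes_A B$. This identifies $(A',B)$-bimodule maps $A' \otimes_A B \to Q$ with $(A,A)$-bimodule maps $A \to Q$ for any $(A',B)$-bimodule $Q$. Applied to $Q = A' \wtimes{A}{M} B$ and the $(A,A)$-bimodule map $A \to A' \wtimes{A}{M} B$ underlying the ring map in \eqref{square}, this produces the unique extension $\phi$. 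The identification of the ring maps $A' \to A' \wtimes{A}{M} B$ and $B \to A' \wtimes{A}{M} B$ with the canonical maps from $A' \otimes_A B$ then follows by uniqueness, since both composites restrict to the same $(A,A)$-bimodule map on $A$.

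For the equivalence, I use the Schwede--Shipley equivalence $\cQ \simeq \RMod(A' \wtimes{A}{M} B)$ of Proposition~\ref{prop:SS}, under which $p(0,B,0)$ corresponds to the rank-one free module. Since $k_{1}, k_{2}$ are colimit-preserving with $k_{1}(A') = k_{2}(B) = p(0,B,0)$, they coincide with the base change functors along the ring maps $A' \to A' \wtimes{A}{M} B$ and $B \to A' \wtimes{A}{M} B$ (these maps being by construction induced on endomorphisms by $k_{1}$ and $k_{2}$). I then plan to enhance the canonical natural equivalence $p(\tau)\colon k_{1} i_{1} \simeq k_{2} i_{2}$ of colimit-preserving functors $\RMod(A) \to \cQ$ to an equivalence of functors on $(C, A)$-bimodules valued in $(C, A'\wtimes{A}{M}B)$-bimodules, for any $k$-algebra $C$ — this is natural because $i_{1}, i_{2}, k_{1}, k_{2}$ are all base change functors. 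Evaluating at $C = A'$ and the $(A', A)$-bimodule $A'$ yields a canonical identification
\[
(A' \otimes_A B) \otimes_B (A' \wtimes{A}{M} B) \;\simeq\; A' \otimes_A (A' \wtimes{A}{M} B)
\]
of $(A', A' \wtimes{A}{M} B)$-bimodules, compatible with $\phi \otimes \mathrm{id}$ and the multiplication action $A' \otimes_A (A'\wtimes{A}{M} B) \to A' \wtimes{A}{M} B$. The map $\phi$ thus becomes an equivalence after base change along $B \to A' \wtimes{A}{M} B$, and descent to the underlying level of spectra completes the argument.

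The main obstacle is the rigorous enhancement of $p(\tau)$ to bimodule-valued functors, together with the final descent step from an identification of $(A', A' \wtimes{A}{M} B)$-bimodules to one of $(A', B)$-bimodules. A clean alternative, which may well be the cleaner execution, is to bypass the descent by computing $\End_\cQ(p(0,B,0))$ as a $k$-module directly via the $(k_{2}, k_{2}^{R})$-adjunction and the Verdier quotient structure of $\cQ$, identifying it with $A' \otimes_A B$ along the way and then verifying that the bimodule map arising from the adjunction agrees with the $\phi$ constructed in Step~1.
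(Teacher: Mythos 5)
Your first step is correct and is exactly the paper's: $A' \otimes_A B$ is the free $(A',B)$-bimodule on the $(A,A)$-bimodule $A$, which gives the unique extension $\phi$ and, by the same uniqueness, the identification of the two ring maps with the canonical ones. The problem is your main route to showing $\phi$ is an equivalence. Granting the enhancement of $p(\tau)$ to bimodules, what you obtain is an identification of $(A' \otimes_A B) \otimes_B (A' \wtimes{A}{M} B)$ with $A' \otimes_A (A' \wtimes{A}{M} B)$ --- two objects that are canonically equivalent anyway, since the left $A$-action on $A' \wtimes{A}{M} B$ factors through both $A'$ and $B$; this does not isolate $\phi$ itself. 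More seriously, even if you establish that $\phi$ becomes an equivalence after base change along $B \to A' \wtimes{A}{M} B$, you cannot conclude that $\phi$ is an equivalence: base change along an arbitrary ring map is not conservative, and no descent datum (faithful flatness, descendability, a splitting) is available here --- indeed $B \to A' \wtimes{A}{M} B$ is typically very far from being such a map (e.g.\ for coordinate axes it is $\SS[y] \to \SS[t]$, $y\mapsto 0$). The word ``descent'' in your last sentence is carrying the entire weight of the argument and is not justified. The underlying difficulty is structural: $k_1$ and $k_2$ are colimit-preserving functors \emph{into} $\RMod(A' \wtimes{A}{M} B)$, so they can only ever produce statements after base change; to compute $A' \wtimes{A}{M} B$ itself one must map \emph{into} the generator of $\cQ$, i.e.\ use the mapping-spectrum formula in the Verdier quotient.

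That is what the paper does, and it is essentially your fallback alternative: from the localization functor $L$ on the oriented fibre product one gets (as in \cite[Prop.~1.13]{LT}) a fibre sequence of $(B,B)$-bimodules
\[
I \otimes_A B \lto B \lto A' \wtimes{A}{M} B, \qquad I = \fib(B \to M) \simeq \fib(A \to A'),
\]
which one compares with the base change along $A \to B$ of the fibre sequence $I \to A \to A'$ from Lemma~\ref{lemma:pullback-bimodule-ring} to get an $(A,B)$-linear equivalence $A' \otimes_A B \simeq A' \wtimes{A}{M} B$. Note that even then one is not done: this equivalence is a priori only $(A,B)$-linear, and identifying it with your $\phi$ requires showing it is $(A',B)$-linear; the paper does this by recognizing its restriction to $A'$ as the ring map $A' \to A' \wtimes{A}{M} B$ (via \cite[Prop.~1.14]{LT}), which is $(A',A)$-linear, and then extending. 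Your proposal flags this compatibility check in the alternative route but leaves both it and the computation of the fibre sequence unexecuted, so as written the proof of the equivalence is incomplete.
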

\begin{proof}
First, we note that $A' \wtimes{A}{M} B$ is, by restriction, canonically an $(A',B)$-bimodule, and hence again by restriction an $(A,A)$-bimodule and the map $A \to A'\wtimes{A}{M} B$ is then an $(A,A)$-bimodule map. As $A' \otimes_A B$ is the free $(A',B)$-bimodule on the $(A,A)$-bimodule $A$, there is a unique $(A',B)$-linear extension $A' \otimes_A B \to A' \wtimes{A}{M} B$ as claimed. The last statement in the proposition is clear from this construction. It now remains to see that the map $A' \otimes_A B \to A' \wtimes{A}{M} B$ is an equivalence. 
For this, let 
\[
I = \fib(B \overset{m}{\lto} M),
\]
so $I$ is canonically a $(B, A)$-bimodule. 
As in \cite[Prop.~1.13]{LT}\footnote{We note here that the proofs of Propositions~1.13 and 1.14 in op.~cit.~have never used that $B'$ is a ring, only the formal properties of the oriented fibre product induced by the $(A',B)$-bimodule $B'$. The arguments therefore indeed carry over to our situation.}, we have a fibre sequence of $(B,B)$-bimodules
\begin{equation}\label{ee1}
I \otimes_{A} B \lto B \lto A' \wtimes{A}{M} B,
\end{equation}
where the second map is the $k$-algebra map $B \to A' \wtimes{A}{M} B$. On the other hand, by Lemma~\ref{lemma:pullback-bimodule-ring}, we have a fibre sequence of underlying $(A,A)$-bimodules
\[
I \lto A \lto A'
\]
and by extension of scalars a fibre sequence of $(A,B)$-bimodules
\[
I \otimes_{A} B \lto B \lto A' \otimes_{A} B.
\]
Comparing with \eqref{ee1}, we obtain a canonical equivalence $A' \otimes_{A} B \xrightarrow{\sim} A' \wtimes{A}{M} B$ as $(A,B)$-bimodules. It is now enough to see that this map is in fact $(A',B)$-linear. As this map is $(A,B)$-linear, this map  is the unique $(A,B)$-linear extenstion of the $(A,A)$-linear composite $A ' \to A' \otimes_{A} B \xrightarrow{\sim} A' \wtimes{A}{M} B$, where the first map is the canonical one. Now \cite[Prop.~1.14]{LT} implies that this composite is the $k$-algebra map $A' \to A' \wtimes{A}{M} B$ constructed above. In particular, it is $(A',A)$-linear. Finally, it is a general fact that extending an $(A',A)$-linear map as an $(A,A)$-linear map to an $(A,B)$-linear map yields an $(A',B)$-linear map. This finishes the proof of the proposition.
\end{proof}

To finish this section, we describe the multiplication map of $A' \wtimes{A}{M} B$ in terms of the equivalence with $A' \otimes_A B$ obtained in Proposition~\ref{prop:underlying-bimodule}. 

\begin{prop}\label{prop:describing-multiplication}
The multiplication map of $A' \wtimes{A}{M} B$ is determined by the composite 
\[ B \otimes_A A' \stackrel{\mu}{\lto} M \stackrel{\phi}{\lto}  A' \wtimes{A}{M} B  
\]
by extending left linearly to $A'$ and right linearly to $B$. Here, $\mu$ is the unique $(B,A')$-linear extension of the canonical map $A \to M$, and $\phi$ is induced by the commutative diagram
\[\begin{tikzcd}
	A \ar[r] \ar[d] & B \ar[d] \\
	A' \ar[r] &  A' \wtimes{A}{M} B  
\end{tikzcd}
\]
using that, by Lemma~\ref{lemma:pullback-bimodule-ring},  $M$ is the pushout of the upper-left part of this square.
\end{prop}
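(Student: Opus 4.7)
The approach is to identify both the multiplication of $R = A' \wtimes{A}{M} B$ and the composite $\phi \circ \mu$ as $(B, A')$-linear maps $B \otimes_A A' \to R$ agreeing on $A$, and then conclude via the universal property of $B \otimes_A A'$.

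First, I would extract an ``inner multiplication'' map $\psi \colon B \otimes_A A' \to R$ from the ring structure. Using the equivalence $R \simeq A' \otimes_A B$ of $(A',B)$-bimodules established in Proposition~\ref{prop:underlying-bimodule}, rewrite
\[
R \otimes_k R \simeq A' \otimes_A (B \otimes_k A') \otimes_A B
\]
as $(A',B)$-bimodules. The multiplication, being $(A',B)$-bilinear, corresponds via the adjunction $(A' \otimes_A (-) \otimes_A B) \dashv \text{restrict}$ between $\BMod(A,A)$ and $\BMod(A',B)$ to an $(A,A)$-linear map $B \otimes_k A' \to R$, which further factors through $B \otimes_A A'$ as multiplication is $A$-balanced, yielding $\psi$. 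Explicitly, $\psi(b \otimes a') = b \cdot_R a'$, and from this formula one checks that $\psi$ is in fact $(B,A')$-linear. The multiplication on $R$ is recovered as the $(A',B)$-bilinear extension of $\psi$, so it suffices to show $\psi = \phi \circ \mu$.

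Second, I would verify that both $\psi$ and $\phi \circ \mu$ restrict to the $k$-algebra map $A \to R$ along $A \hookrightarrow B \otimes_A A'$, $a \mapsto 1 \otimes a$. For $\psi$ this is immediate from $\psi(1 \otimes a) = a$ in $R$; for $\phi \circ \mu$ it uses that $\mu|_A$ is the basepoint map $A \to M$ while $\phi$ sends this basepoint to the unit of $R$ by construction from the defining commutative square. Since $B \otimes_A A'$ is the free $(B,A')$-bimodule on the $(A,A)$-bimodule $A$, a $(B,A')$-linear map out of $B \otimes_A A'$ is uniquely determined by its restriction to $A$. Thus $\psi = \phi \circ \mu$ would follow at once if we knew $\phi \circ \mu$ to be $(B,A')$-linear.

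The hard step is exactly this verification: that $\phi \circ \mu$ is $(B,A')$-linear, or equivalently that $\phi \colon M \to R$ intertwines the $(B,A')$-actions on the image of $\mu$ inside $M$ with the ring-induced action on $R$. I plan to establish this by applying the left adjoint $B \otimes_A (-) \otimes_A A' \colon \BMod(A,A) \to \BMod(B,A')$ to the pushout square of Lemma~\ref{lemma:pullback-bimodule-ring} and to the commutative square from which $\phi$ is constructed. Composing the resulting pushout of $(B,A')$-bimodules with the canonical action maps $B \otimes_A M \otimes_A A' \to M$ and $B \otimes_A R \otimes_A A' \to R$ reduces the desired compatibility to a statement on the ``generators'' $B \otimes_A B \otimes_A A'$ and $B \otimes_A A' \otimes_A A'$, which follows from the fact that $\phi$ restricts to the ring maps $A' \to R$ and $B \to R$. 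Alternatively, one may unwind $\phi$ as a morphism in the Verdier quotient $\cQ$ and read $(B,A')$-equivariance directly from the oriented fibre product, where both sides of the equation arise from manipulating the same span in $\RMod(A') \laxtimes{M} \RMod(B)$.
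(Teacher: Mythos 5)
Your reduction is the same as the paper's: the multiplication of $R = A'\wtimes{A}{M}B$, being $(A',B)$-linear out of a free $(A',B)$-bimodule, is determined by its restriction $\nu\colon B\otimes_A A'\to R$ along the tensor product of the two ring maps; this $\nu$ is $(B,A')$-linear, hence determined by its further restriction to $A$, which is the ring map $A\to R$; and since $\mu$ is $(B,A')$-linear by definition and $\phi\circ\mu$ restricts to the same map on $A$, everything comes down to promoting $\phi$ to a $(B,A')$-linear map. Up to this point your argument and the paper's proof agree (the paper works with $R\otimes_A R$ rather than $R\otimes_k R$, which spares you the "factors through as it is $A$-balanced" step, but this is cosmetic).

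The gap is in your primary method for the remaining step. In this $\infty$-categorical setting, $(B,A')$-linearity of the $(A,A)$-bimodule map $\phi\colon M\to R$ is structure, not a property: one cannot ``reduce the desired compatibility to a statement on the generators'' and verify it there, because what is actually needed is a lift of $\phi$ to $\BMod(B,A')$ --- only with such a lift in hand does the uniqueness of $(B,A')$-linear extensions off the free bimodule $B\otimes_A A'$ apply to $\phi\circ\mu$. Checking commutation with the actions after applying $B\otimes_A(-)\otimes_A A'$ would at best produce unspecified homotopies, not the required lift. The paper resolves this exactly along the lines of your stated alternative: it produces a $(B,A')$-linear map $M\to R$ directly from the oriented fibre product, by comparing the mapping spectra out of $(0,B,0)$ and $(\Omega A',0,0)$ into $L(0,B,0)\simeq L(\Omega A',0,0)$ for the localization functor $L$ associated to the Verdier projection; the middle row of the resulting diagram is manifestly $(B,A')$-linear, the outer rows identify with the ring maps $B\to R$ and $A'\to R$, and since this map therefore fits into the square exhibiting $M$ as the pushout $A'\amalg_A B$ of $(A,A)$-bimodules (Lemma~\ref{lemma:pullback-bimodule-ring}), its underlying $(A,A)$-linear map is forced to agree with $\phi$. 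So your fallback is the correct (and the paper's) argument; the ``generators'' route should be replaced by this construction.
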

\begin{proof}
Since $A' \wtimes{A}{M} B$ is a ring under $A$, its multiplication is given by a map
\[
(A' \wtimes{A}{M} B) \otimes_{A} (A'\wtimes{A}{M} B) \lto A' \wtimes{A}{M} B.
\]
This map is $(A',B)$-linear and as such, by Proposition~\ref{prop:underlying-bimodule}, uniquely determined by its restriction along the map
\[
B \otimes_{A} A' \lto (A' \wtimes{A}{M} B) \otimes_{A} (A'\wtimes{A}{M} B)
\]
which is the tensor product (over $A$) of the ring maps $B \to A' \wtimes{A}{M} B$ and $A' \to A'\wtimes{A}{M} B$. We denote this restricted map by
\[
\nu \colon B \otimes_{A} A' \lto A' \wtimes{A}{M} B.
\]
By construction, the map $\nu$ is $(B,A')$-linear, hence it is uniquely determined by its restriction along $A \to B \otimes_{A} A'$. This restricted map is just the ring map $A \to A' \wtimes{A}{M} B$.

Recall the definition of $\phi\colon M \to A'\wtimes{A}{M} B$ and $\mu\colon B\otimes_{A}A' \to M$.
We claim that $\nu$ is the composition $\phi\circ\mu$. As $B\otimes_{A}A'$ is the free $(B,A')$-bimodule on the $(A,A)$-bimodule $A$ and the restriction of $\phi\circ\mu$ along $A \to B \otimes_{A} A'$  coincides with the ring map $A \to A' \wtimes{A}{M} B$ by construction, it suffices to prove that $\phi\circ\mu$ is $(B,A')$-linear. For $\mu$ this is true by definition. So it suffices to show that $\phi$ is $(B,A')$-linear. By definition, $\phi$ is the unique $(A,A)$-bilinear map fitting in the commutative diagram
\[
\begin{tikzcd}
 A \ar[d]\ar[r] & B \ar[d] \ar[ddr, bend left]\\ 
 A' \ar[r]\ar[drr, bend right] & M \ar[dr, dashed, "\exists! \phi"] \\	
 		&	& A'\wtimes{A}{M} B .
\end{tikzcd}
\]
It thus suffices to produce a $(B,A')$-linear map $M \to A' \wtimes{A}{M} B$ fitting in such a commutative diagram, as then the underlying $(A,A)$-linear map necessarily agrees with $\phi$.
To this end, we use an argument of the proof of \cite[Prop.~1.14]{LT}. Let $L$ denote the localisation endofunctor of $\RMod(A') \laxtimes{M} \RMod(B)$ corresponding to the Verdier projection $p\colon \RMod(A') \laxtimes{M} \RMod(B) \to \cQ$.
Recall the map 
\[
\tau_{\Lambda}\colon (\Omega A', 0,0) \lto (0,B,0)
\]
which becomes an equivalence upon applying $L$. We have the following commutative diagram of mapping spaces in $\RMod(A')\laxtimes{M} \RMod(B)$.
\[
\begin{tikzcd}
 \map((0,B,0), (0,B,0)) \ar[d, "\tau_{\Lambda}^{*}"']\ar[r] & \map( (0,B,0), L(0,B,0) ) \ar[d, "\simeq"] \\ 
 \map((\Omega A', 0,0), (0,B,0)) \ar[r] & \map( (\Omega A', 0, 0), L(0,B,0) )\\
 \map((\Omega A', 0,0), (\Omega A', 0,0)) \ar[u, "\tau_{\Lambda*}"]\ar[r] & \map ( (\Omega A', 0,0), L(\Omega A', 0,0) ) \ar[u, "\simeq"'] 
\end{tikzcd}
\]
The left column identifies with $B \to M \leftarrow A'$, the spectra in the right column all identify with $A' \wtimes{A}{M} B$. Obviously, the middle horizontal map is $(B,A')$-linear. Moreover, the upper respectively lower horizontal map identifies with the canonical ring map $B \to A'\wtimes{A}{M}B$ respectively $A' \to A' \wtimes{A}{M} B$; see also the proof of \cite[Prop.\ 1.14]{LT}. Thus this finishes the construction of the desired map, and hence the proof of the proposition. 
\end{proof}

\begin{rem}
We thank Hongxing Chen and Changchang Xi for sharing with us their construction of non-commuta\-tive tensor products, whose relation to our work we explain now.
In \cite{MR3896125}, they define and study a non-commutative tensor product associated with a so-called exact context, and in 
the preprint \cite{Chen:2012uy}, they also provide $K$-theory pullbacks for what they call homological exact contexts. 
In our language, an exact context is a Milnor context $(A', B, M)$ over $\Z$ in which all objects are discrete and the map $A' \oplus B \to M$ is surjective. In particular, the pullback $A$ is then also discrete. It is homological if the Tor-groups $\Tor_{*}^{A}(A',B)$ vanish in positive degrees.
Their non-commutative tensor product is then a discrete ring, which they denote by $A' \boxtimes_{A} B$ (we apologize for the notational clash with what we denote $A' \boxtimes_M B$). It is a consequence of the definition of $A' \boxtimes_A B$ and Proposition~\ref{prop:describing-multiplication} that there is a canonical ring isomorphism $\pi_{0}(A' \wtimes{A}{M} B) \cong A' \boxtimes_{A} B$. Furthermore, by Proposition~\ref{prop:underlying-bimodule}, an exact context is homological if and only if $A' \odot_{A}^{M} B$ is discrete and hence equivalent to $A' \boxtimes_{A} B$. In particular, \cite[Theorem 1.3(1)]{Chen:2012uy} is a special case of Theorem~\ref{thm:GLT}.
\end{rem}

\subsection{Further properties of the $\odot$-construction}

In this final subsection, we investigate the naturality of the $\odot$-construction as well as its behaviour with respect to base change and possibly existing commutativity structures.
We begin with its functoriality in the Milnor context.

\begin{dfn}\label{def:map-of-milnor-contexts}
A map of Milnor contexts $(A', B, M) \to (C', D, N)$ over $k$  consists of $k$-algebra maps $A' \to C'$ and $B \to D$, a map of $(B,A')$-bimodules  $M \to N$, where $N$ is viewed as a $(B,A')$-module via restriction, and a commutative diagram
\[
\begin{tikzcd}
M \ar[rr] && N \\
& k \ar[ur]\ar[ul],
\end{tikzcd}
\]
where the  maps from $k$ are the respective base points.
\end{dfn}

\begin{lemma}\label{lemma:map-of-milnor-contexts}
Given a map of Milnor contexts $(A', B, M) \to (C', D, N)$ over $k$, we obtain a canonical map of commutative squares of $k$-algebras
\[
\left(\begin{tikzcd}
 A \ar[d]\ar[r] & B \ar[d] \\ 
 A' \ar[r] & A' \wtimes{A}{M} B 
\end{tikzcd}\right)
\lto
\left(
\begin{tikzcd}
 C \ar[d]\ar[r] & D \ar[d] \\ 
 C' \ar[r] & C' \wtimes{C}{N} D 
\end{tikzcd}
\right)
\]
where $A=A' \boxtimes_{M} B$ and $C=C' \boxtimes_{N} D$.
\end{lemma}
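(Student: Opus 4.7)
The plan is to produce an exact $k$-linear functor
\[
F \colon \RMod(A') \laxtimes{M} \RMod(B) \lto \RMod(C') \laxtimes{N} \RMod(D)
\]
which carries the distinguished object $\Lambda = (A',B,m)$ to $\Lambda' = (C',D,n)$ and the object $(0,B,0)$ to $(0,D,0)$. Passing to endomorphism rings of these objects in the oriented fibre product itself and in its Verdier quotient will then furnish the four $k$-algebra maps $A \to C$, $A' \to C'$, $B \to D$, and $A' \wtimes{A}{M} B \to C' \wtimes{C}{N} D$, and the commutativity of the resulting map of squares will come for free from $F$'s compatibility with the canonical inclusions and projections.

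To construct $F$, the data of a map of Milnor contexts gives exactly what is required: the $k$-algebra maps $A' \to C'$ and $B \to D$ yield extension-of-scalars functors, and the $(B,A')$-linear map $M \to N$ has a unique $(B,C')$-linear extension $M \otimes_{A'} C' \to N$, yielding a natural transformation
\[
\bigl({-}\otimes_B M\bigr) \otimes_{A'} C' \;=\; {-}\otimes_B (M\otimes_{A'} C') \;\Longrightarrow\; {-}\otimes_B N \;=\; \bigl({-}\otimes_B D\bigr)\otimes_D N
\]
of functors $\RMod(B) \to \RMod(C')$. Combining the strict functoriality of oriented fibre products in component-wise maps of cospans (obvious from their pullback description) with Lemma~\ref{lemma:functoriality} applied to the above natural transformation, one obtains an exact, colimit-preserving functor $F$ acting on objects by
\[
(X,Y,f) \longmapsto \bigl(X \otimes_{A'} C',\; Y \otimes_B D,\; X \otimes_{A'} C' \xrightarrow{f \otimes C'} Y\otimes_B(M\otimes_{A'} C') \to Y\otimes_B N \bigr).
\]
By Lemma~\ref{lemma:compact-generation} and the fact that each extension-of-scalars functor preserves compact objects, $F$ preserves compact objects.

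Verifying $F(\Lambda) = \Lambda'$ amounts to tracing that the third component of $F(\Lambda)$ is the right $C'$-linear map $C' \to N$ determined by $1 \mapsto (M\to N)(m_0) = n_0$, which by the compatibility of basepoints in Definition~\ref{def:map-of-milnor-contexts} is precisely the basepoint map $n$ of $\Lambda'$; the check $F(0,B,0) = (0,D,0)$ is immediate. Since $F$ is exact and sends $\Lambda$ to $\Lambda'$, it descends through the Schwede--Shipley embeddings $\RMod(A) \hookrightarrow \RMod(A') \laxtimes{M} \RMod(B)$ and $\RMod(C) \hookrightarrow \RMod(C') \laxtimes{N} \RMod(D)$ to induce the ring map $A \to C$, and compatibly with the Verdier projections $p$ and $p'$ to give an exact functor $\bar F \colon \cQ_{(A',B,M)} \to \cQ_{(C',D,N)}$ with $\bar F(p(0,B,0)) \simeq p'(0,D,0)$; passing to endomorphism rings yields $A' \wtimes{A}{M} B \to C' \wtimes{C}{N} D$. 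Commutativity of the resulting square of squares follows because $F$ strictly intertwines the canonical inclusions $j_1, j_2$ with theirs and the projections of the oriented fibre products onto the two factors with theirs, so the ring maps $A \to A', A \to B$ and $A' \to A' \wtimes{A}{M}B, B \to A' \wtimes{A}{M} B$ are all carried as claimed.

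The main obstacle I anticipate is the rigorous $\infty$-categorical construction of $F$ from a diagram of cospans that commutes only up to a genuine (non-invertible) natural transformation. While the formula on objects is transparent, assembling a coherent functor between oriented fibre products requires either the bifibration viewpoint underlying the proof of Lemma~\ref{lemma:functoriality}---perhaps after first factoring $F$ through the strict base changes so that only a single natural transformation remains---or the universal property of oriented fibre products as partially lax limits. Once $F$ is in hand, the identification $F(\Lambda) = \Lambda'$ is essentially a restatement of the basepoint compatibility in Definition~\ref{def:map-of-milnor-contexts}, and the remaining bookkeeping (descent to Verdier quotients and naturality of the resulting square) is routine.
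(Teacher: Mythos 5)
Your proposal is correct and follows essentially the same route as the paper: construct a functor between the oriented fibre products from the extension-of-scalars functors together with the natural-transformation functoriality of Lemma~\ref{lemma:functoriality}, check that it carries $\Lambda_M$ to $\Lambda_N$ via the basepoint compatibility of Definition~\ref{def:map-of-milnor-contexts}, and then descend to the Verdier quotients and pass to endomorphism rings of the preferred generators. One small correction to the difficulty you flag at the end: after the strict base changes a \emph{single} natural transformation does not quite suffice, since the strictly commuting square of module categories has top arrow $-\otimes_B(D\otimes_B M)$ rather than $-\otimes_B M$; accordingly the paper factors the functor into three steps, first applying Lemma~\ref{lemma:functoriality} to the $(B,A')$-bimodule map $M \to D\otimes_B M$, then the strict square of extension-of-scalars functors, and then Lemma~\ref{lemma:functoriality} again for the $(D,C')$-bimodule map $D\otimes_B M\otimes_{A'}C' \to N$.
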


\begin{proof}
We will construct a canonical functor 
\[
\Phi\colon \RMod(A') \laxtimes{M} \RMod(B) \lto \RMod(C') \laxtimes{N} \RMod(D)
\]
which respects the objects of Constructions~\ref{dfn:lambda-and-A} and \ref{constr:Q} and therefore induces a functor $\Psi\colon \cQ_M \to \cQ_N$ of the Verdier quotients which again respects the objects of Construction~\ref{constr:Q}. The functor $\Psi$ then participates in the map of squares of categories 
\[ \left(\begin{tikzcd}
\RMod(A) \ar[r] \ar[d] & \RMod(B) \ar[d] \\
\RMod(A') \ar[r] & \cQ_M 
\end{tikzcd}\right)
\lto 
\left(\begin{tikzcd}
\RMod(C) \ar[r] \ar[d] & \RMod(D) \ar[d] \\
\RMod(C') \ar[r] & \cQ_N 
\end{tikzcd}\right), \]
and the map of squares of $k$-algebras in the statement of the lemma is obtained by passing to endomorphisms of the canonical generators everywhere.

We now proceed to construct the functor $\Phi$ above.
We consider the $(D,A')$-bimodule $D \otimes_{B} M$ and view it as a $(B,A')$-bimodule. As such, it receives a map from $M$ induced by the $k$-algebra map $B \to D$. This map of $(B,A')$-bimodules gives a natural transformation of the corresponding functors $\RMod(B) \to \RMod(A')$. From Lemma~\ref{lemma:functoriality}, we obtain a functor
\[
\RMod(A') \laxtimes{M} \RMod(B) \lto \RMod(A') \laxtimes{D\otimes_{B} M} \RMod(B). 
\]
On the other hand, we consider the free $(D,C')$-bimodule on $M$, namely $D \otimes_{B}M\otimes_{A'}C'$. It induces the lower horizontal functor in the following commutative diagram.
\[
\begin{tikzcd}[column sep =   2.5cm]
 \RMod(B) \ar[d, "-\otimes_{B}D"']\ar[r, "-\otimes_{B}D\otimes_{B}M"] & \RMod(A') \ar[d, "-\otimes_{A'}C'"] \\ 
 \RMod(D) \ar[r, "-\otimes_{B}D \otimes_{B}M\otimes_{A'}C'"] & \RMod(C') 
\end{tikzcd}
\]
Here the vertical functors are given by extension of scalars. By functoriality of the pullback of $\infty$-categories and of arrow categories, this commutative diagram induces a functor
\[
\RMod(A') \laxtimes{D\otimes_{B} M} \RMod(B) \lto \RMod(C') \laxtimes{D \otimes_{B}M\otimes_{A'}C'} \RMod(D).
\]
Now the $(B,A')$-bimodule map $M \to N$ is equivalently given by a $(D,C')$-bimodule map $D \otimes_{B}M\otimes_{A'}C' \to N$. By the same argument as in the first step, we obtain a third functor
\[
\RMod(C') \laxtimes{D \otimes_{B}M\otimes_{A'}C'} \RMod(D) \lto \RMod(C') \laxtimes{N} \RMod(D).
\]
Composing the above three functors, we obtain the desired functor $\Phi$. It sends an object $(X,Y, X \xrightarrow{f} Y \otimes_{B} M)$ to 
\[
(X \otimes_{A'} C', Y \otimes_{B} D, X \otimes_{A'} C' \xrightarrow{\hat f} (Y \otimes_{B} D) \otimes_{D} N \simeq Y \otimes_{B} N)
\]
where $\hat f$ is the $C'$-linear extension of the composition $X \xrightarrow{f} Y \otimes_{B}M \to Y\otimes_{B} N$.
In particular, $\Phi$ sends the object $\Lambda_{M} = (A', B, A' \to M)$ of Construction~\ref{dfn:lambda-and-A} to the corresponding object $\Lambda_{N}$ for the Milnor context $(C',D,N)$. Here we use the compatibility of the base points. 
The functor $\Phi$ has all desired properties.
\end{proof}

\begin{rem}
There is a canonical $\infty$-category of Milnor contexts, which in the notation of \cite{HA} is given by $\BMod(\Mod(k))_{(k,k,k)/}$, see \cite[Example 4.3.1.15]{HA}. Here, $\BMod(\Mod(k))$ is an $\infty$-category of bimodules whose objects consist of triples $(A',B,M)$ where $A'$ and $B$ are $k$-algebras and $M$ is a $(B,A')$-bimodule in $\Mod(k)$ and $(k,k,k)$ is the obvious object in $\BMod(\Mod(k))$.
The construction sending $(A',B,M)$ to the commutative square 
\[ \begin{tikzcd}
	\RMod(A) \ar[r] \ar[d] & \RMod(B) \ar[d] \\
	\RMod(A') \ar[r] & \cQ_M
\end{tikzcd}\]
ought to refine to a functor $\BMod(\Mod(k))_{(k,k,k)/} \to \Fun(\Delta^1 \times \Delta^1, \Pr^\L_\omega(k)_{\ast/})$, but we refrain from attempting a precise proof of this. Lemma~\ref{lemma:map-of-milnor-contexts} records what the effect of this supposed functor is on morphisms.
\end{rem}

Next, we record two compatibilities of the $\odot$-construction with tensor products and base change, respectively. 
In the first case, we assume that the base ring $k$ is an $\E_{\infty}$-ring so that $\Mod(k)$ and also $\Alg(k)$ become symmetric monoidal $\infty$-categories with respect to the relative tensor product $\otimes_{k}$.
As always, let $(A', B, M)$ be a Milnor context over $k$. Let $R$ be another $k$-algebra. Then we can form the $k$-algebras $A'_{R} := A' \otimes_{k} R$ and $B_{R} := B \otimes_{k} R$, and the $(B_{R}, A'_{R})$-bimodule $M_{R} := M \otimes_{k} R$. So $(A'_{R}, B_{R}, M_{R})$ is another Milnor context over $k$. 

\begin{prop}\label{prop:tensor-product}
In the above situation, there are canonical equivalences of $k$-algebras 
\[
(A'\boxtimes_{M}B) \otimes_{k} R \xrightarrow{\sim} A'_{R} \boxtimes_{M_{R}} B_{R} \qquad\text{ and }\qquad (A' \wtimes{A}{M} B) \otimes_{k} R \xrightarrow{\sim} A'_{R} \wtimes{A_{R}}{M_{R}} B_{R}.
\]
\end{prop}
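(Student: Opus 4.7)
My plan is to apply the functoriality provided by Lemma~\ref{lemma:map-of-milnor-contexts} to the morphism of Milnor contexts $(A',B,M) \to (A'_{R},B_{R},M_{R})$ induced by the unit $k\to R$, and then to verify that the resulting maps become equivalences after tensoring with $R$. The first step is thus to apply Lemma~\ref{lemma:map-of-milnor-contexts} to obtain $k$-algebra maps $A'\boxtimes_{M}B \to A'_{R}\boxtimes_{M_{R}}B_{R} =: A_{R}$ and $A'\wtimes{A}{M}B \to A'_{R}\wtimes{A_{R}}{M_{R}}B_{R}$. Since the target Milnor context consists of $R$-modules and all structure maps are $R$-linear, the targets naturally carry the structure of $R$-algebras. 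The adjunction between $k$- and $R$-algebras then furnishes the desired comparison maps $(A'\boxtimes_{M}B)\otimes_{k}R \to A'_{R}\boxtimes_{M_{R}}B_{R}$ and $(A'\wtimes{A}{M}B)\otimes_{k}R \to A'_{R}\wtimes{A_{R}}{M_{R}}B_{R}$ of $R$-algebras.

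Next I would verify that these comparison maps are equivalences, which may be checked on underlying $k$-modules. For the $\boxtimes$-ring, Lemma~\ref{lemma:pullback-bimodule-ring} applied to both Milnor contexts identifies source and target as $(A'\times_{M}B)\otimes_{k}R$ and $A'_{R}\times_{M_{R}}B_{R}$ respectively. The base-change functor $-\otimes_{k}R \colon \Mod(k)\to \Mod(R)$ is a left adjoint between stable presentable $\infty$-categories, hence exact, and therefore preserves finite limits. The desired equivalence follows.

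For the $\odot$-ring, Proposition~\ref{prop:underlying-bimodule} identifies the underlying bimodules of both sides with the relative tensor products $A'\otimes_{A}B$ and $A'_{R}\otimes_{A_{R}}B_{R}$. Presenting the relative tensor product as the geometric realisation of the two-sided bar complex, using that $-\otimes_{k}R$ preserves colimits, and noting the identity $(A\otimes_{k}R)^{\otimes_{R}n}\simeq A^{\otimes_{k}n}\otimes_{k}R$, one obtains
\[ (A'\otimes_{A}B)\otimes_{k}R \simeq A'_{R}\otimes_{A_{R}}B_{R}, \]
where the previously established equivalence $A\otimes_{k}R\simeq A_{R}$ enters in identifying the ring over which the relative tensor product on the right-hand side is taken.

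The main obstacle I foresee is not the module-level computation itself, but the bookkeeping required to ensure that the abstract comparison maps produced via Lemma~\ref{lemma:map-of-milnor-contexts} coincide on underlying modules with the concrete base-change equivalences described above. This should follow from the naturality of Lemma~\ref{lemma:pullback-bimodule-ring} and Proposition~\ref{prop:underlying-bimodule} in the Milnor context, which in turn is a formal consequence of the fact that the functor $\Phi$ built in the proof of Lemma~\ref{lemma:map-of-milnor-contexts} sends the canonical compact generators to the corresponding ones and respects the Verdier quotient $\cQ$.
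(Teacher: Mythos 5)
Your strategy for the verification step --- reduce to underlying $k$-modules via Lemma~\ref{lemma:pullback-bimodule-ring} and Proposition~\ref{prop:underlying-bimodule}, and use that $-\otimes_k R$ is exact and preserves geometric realizations of bar complexes --- is sound and genuinely different from the paper's argument, which instead shows that the entire categorical construction (the oriented fibre product, the generator $\Lambda$, and the Verdier quotient $\cQ$) is carried into the corresponding construction for $(A'_R,B_R,M_R)$ by $-\otimes_k\RMod(R)$, using dualizability of $\RMod(R)$ in $\Pr^{\L}(k)$. The gap lies earlier, in how you produce the comparison maps. Applying Lemma~\ref{lemma:map-of-milnor-contexts} to $(A',B,M)\to(A'_R,B_R,M_R)$ is fine and gives $k$-algebra maps $A'\boxtimes_M B\to A'_R\boxtimes_{M_R}B_R$ and $A'\wtimes{A}{M}B\to A'_R\wtimes{A_R}{M_R}B_R$. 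But the extension over $-\otimes_k R$ via ``the adjunction between $k$- and $R$-algebras'' is not available: $R$ is only assumed to be an $\E_1$-algebra over $k$, so there is no monoidal $\infty$-category $\Mod(R)$, no $\infty$-category of $R$-algebras, and no such adjunction; correspondingly it is unclear in what sense $A'_R\boxtimes_{M_R}B_R=\End_{\slax}(\Lambda_R)$, formed inside a $k$-linear oriented fibre product, ``naturally carries the structure of an $R$-algebra''. Even for $\E_\infty$-$R$, producing that structure compatibly with the map from $A'\boxtimes_M B$ is essentially the base-change compatibility of Proposition~\ref{prop:base-change}, which is not available at this point and is itself proved by the same method as the proposition at hand.

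What is actually needed is a $k$-algebra map out of the tensor product $(A'\boxtimes_M B)\otimes_k R\simeq\End_{\slax}(\Lambda)\otimes_k\End_R(R)$, i.e.\ maps from $\End_{\slax}(\Lambda)$ and from $R$ into $\End_{\slax}(\Lambda_R)$ together with a commutation datum. This is exactly what the lax monoidal structure of the external tensor product provides: the canonical algebra map $\End_{\cC}(X)\otimes_k\End_{\cD}(Y)\to\End_{\cC\otimes_k\cD}(X\otimes Y)$ for compact $X$ and $Y$, applied to $\Lambda\otimes R$ under the identification $(\RMod(A')\laxtimes{M}\RMod(B))\otimes_k\RMod(R)\simeq\RMod(A'_R)\laxtimes{M_R}\RMod(B_R)$. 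Once the comparison map is phrased this way you are already running the core of the paper's proof, and your module-level check becomes a workable alternative way to conclude (the bookkeeping you flag, identifying the map on underlying modules with the evident base-change map, is then read off from the construction). As written, however, the construction of the comparison map is the gap, not the bookkeeping.
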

\begin{proof}
By assumption $\Mod(k)$ is a commutative algebra object in $\Pr^{\L}_{\st}$, and we write $\Pr^{\L}(k)$ for the closed symmetric monoidal $\infty$-category of $\Mod(k)$-modules in $\Pr^{\L}_{\st}$.
The association $S \mapsto \RMod(S)$ refines to a symmetric monoidal functor $\Alg(k) \to \Pr^{\L}(k)$; see \cite[Remark 4.8.5.17]{HA}. In particular, for any $k$-algebra $S$, we have a canonical equivalence $\RMod(S \otimes_{k} R) \cong \RMod(S) \otimes_{k} \RMod(R)$. 
We claim that all categorical constructions performed in the proof of Theorem~\ref{thm:GLT} are compatible with tensoring with $\RMod(R)$. We first argue that
\begin{equation}
	\label{ee2}
 (\RMod(A') \laxtimes{M} \RMod(B) ) \otimes_{k} \RMod(R) \simeq \RMod(A'_{R}) \laxtimes{M_{R}} \RMod(B_{R}).
\end{equation}
For ease of notation, we write $\cR = \RMod(R)$. By definition of the oriented fibre product, it suffices to check that $(-)\otimes_{k}\cR$ preserves pullbacks of $k$-linear $\infty$-categories and the formation of arrow categories. As $\cR$ is compactly generated, it is dualizable in $\Pr^{\L}(k)$ \cite[Proposition~4.10]{HoyoisScherotzkeSibilla}. Let $\cR^{\vee}$ denote its dual. By the above description of the internal hom objects, we have an equivalence of functors 
$
(-) \otimes_{k} \cR \simeq \Fun^{L}_{k}(\cR^{\vee}, -).
$
Now for any $\cC \in \Pr^{\L}(k)$ we get
\[
\Fun(\Delta^{1}, \cC) \otimes_{k} \cR
	\simeq \Fun^{\L}_{k}(\cR^{\vee}, \Fun(\Delta^{1}, \cC))
	\simeq \Fun(\Delta^{1}, \Fun^{\L}_{k}(\cR^{\vee}, \cC))
	\simeq \Fun(\Delta^{1}, \cC \otimes_{k} \cR).
\]
For the equivalence we use the explicit description of the $\infty$-category of $k$-linear functors \cite[Lemma~4.8.4.12]{HA} together with the fact that colimits in $\Fun(\Delta^{1}, \cC)$ are formed pointwise. By a similar argument one sees that $(-) \otimes_{k} \cR$ preserves pullbacks of $k$-linear categories, as claimed. So we have established the equivalence \eqref{ee2}.
Under this equivalence, the object $\Lambda \otimes_{k} R$, where $\Lambda$ is as in Construction~\ref{dfn:lambda-and-A}, is mapped to the object $\Lambda_{R} := (A'_{R}, B_{R}, A'_{R} \to M_{R})$ corresponding to the Milnor context $(A'_{R}, B_{R}, M_{R})$ in the right-hand $\infty$-category. It follows that 
\[
A \otimes_{k} R \simeq \End_{\slax}(\Lambda) \otimes_{k} \End_{R}(R) \simeq \End_{\slax}(\Lambda_{R}) \simeq A_{R},
\]
establishing the first claim. It also follows that the full subcategory $\RMod(A_{R})$ of the right-hand $\infty$-category in \eqref{ee2} is equivalent to $\RMod(A) \otimes_{k} \cR$. As $(-)\otimes_{k} \cR$ preserves exact sequences of $k$-linear categories (see the proof of~\cite[Lemma~3.12]{LT}), it follows that for the corresponding Verdier quotient $\cQ_{R}$ from Construction~\ref{constr:Q} we also get an equivalence $\cQ \otimes_{k} \cR \simeq \cQ_{R}$, which again preserves the preferred generators. By the construction in \ref{constr:Q}, this implies the second assertion of the proposition.
\end{proof}

One can use very similar arguments to prove the following base change compatibility of the $\odot$-construction. Back to the general setting, we here assume that the base ring $k$ is $\E_{2}$ and consider a map of $\E_{2}$-rings $k \to \ell$. Then $(-)\otimes_{k} \ell$ refines to a monoidal functor $\Mod(k) \to \Mod(\ell)$ and hence induces a functor $\Alg(k) \to \Alg(\ell)$. Both functors will be referred to as  base change along $k \to \ell$. Given a Milnor context $(A', B, M)$ over $k$, we obtain a Milnor context $(A'_{\ell}, B_{\ell}, M_{\ell})$ over $\ell$ by case change.

\begin{prop}\label{prop:base-change}
In the above situation, there are canonical equivalences of $\ell$-algebras
\[
(A'\boxtimes_{M}B) \otimes_{k} \ell \simeq (A'_{\ell}\boxtimes_{M_{\ell}}B_{\ell}) \qquad \text{ and } \qquad (A' \wtimes{A}{M} B) \otimes_{k} \ell \simeq A'_{\ell} \wtimes{A_{\ell}}{M_{\ell}} B_{\ell}. 
\]
\end{prop}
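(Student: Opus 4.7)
The plan is to adapt the argument from the proof of Proposition~\ref{prop:tensor-product} to the base change setting, replacing the tensor product $(-) \otimes_k \cR$ on $k$-linear presentable categories by the base change $(-) \otimes_{\Mod(k)} \Mod(\ell)$, where $\Mod(\ell)$ is viewed as an object of $\Pr^{\L}(k)$ via the monoidal functor $\Mod(k) \to \Mod(\ell)$ induced by $k \to \ell$. The key input is the compatibility of the assignment $S \mapsto \RMod(S)$ with base change: for any $k$-algebra $S$, there is a canonical equivalence
\[
\RMod(S \otimes_k \ell) \simeq \RMod(S) \otimes_{\Mod(k)} \Mod(\ell)
\]
of $\ell$-linear presentable stable $\infty$-categories.

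First, I would establish the analogue of equivalence~\eqref{ee2}, namely
\[
(\RMod(A') \laxtimes{M} \RMod(B)) \otimes_{\Mod(k)} \Mod(\ell) \simeq \RMod(A'_\ell) \laxtimes{M_\ell} \RMod(B_\ell).
\]
Unwinding the definition of the oriented fibre product, this reduces to showing that $(-) \otimes_{\Mod(k)} \Mod(\ell)$ preserves pullbacks and arrow categories in $\Pr^{\L}(k)$. Since $\Mod(\ell)$ is compactly generated by $\ell$, it is dualizable as a $\Mod(k)$-module; writing $N$ for its dual, we have $(-) \otimes_{\Mod(k)} \Mod(\ell) \simeq \Fun^{\L}_k(N, -)$, and the two arguments given in the proof of Proposition~\ref{prop:tensor-product} carry over verbatim. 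Under this equivalence, the canonical object $\Lambda \in \RMod(A') \laxtimes{M} \RMod(B)$ is sent to the analogous object $\Lambda_\ell$ for the base-changed Milnor context, since the base point of $M_\ell$ is the base change of that of $M$. Passing to endomorphism $\ell$-algebras then yields the first claimed equivalence
\[
(A' \boxtimes_M B) \otimes_k \ell \simeq \End_{\slax}(\Lambda_\ell) \simeq A'_\ell \boxtimes_{M_\ell} B_\ell.
\]

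For the second claim, the same dualizability argument shows that $(-) \otimes_{\Mod(k)} \Mod(\ell)$ preserves exact sequences of $k$-linear presentable stable $\infty$-categories (cf.\ the proof of \cite[Lemma~3.12]{LT}). Hence it sends the Verdier quotient $\cQ$ of Construction~\ref{constr:Q} to the corresponding quotient $\cQ_\ell$ for the base-changed Milnor context, and respects the preferred compact generator $p(0,B,0)$. Passing to endomorphism $\ell$-algebras then yields the desired equivalence $(A' \wtimes{A}{M} B) \otimes_k \ell \simeq A'_\ell \wtimes{A_\ell}{M_\ell} B_\ell$.

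The only real subtlety --- and the main point to check carefully --- is that the monoidal refinement of $S \mapsto \RMod(S)$ and its compatibility with the relative tensor product $(-) \otimes_{\Mod(k)} \Mod(\ell)$ behave correctly when $k$ and $\ell$ are merely $\E_2$ rather than $\E_\infty$. This is essentially already encoded in the framework set up in the Conventions section, so no new ideas beyond those of Proposition~\ref{prop:tensor-product} are needed.
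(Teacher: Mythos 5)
Your proposal is correct and follows exactly the route the paper intends: the paper in fact omits the proof of this proposition entirely, stating only that "one can use very similar arguments" to those of Proposition~\ref{prop:tensor-product}, and your write-up supplies precisely those arguments (dualizability of $\Mod(\ell)$ over $\Mod(k)$, preservation of pullbacks, arrow categories, and exact sequences, and compatibility with the distinguished objects $\Lambda$ and $p(0,B,0)$). Your closing remark correctly flags the only genuine point of care, namely that the monoidal (rather than symmetric monoidal) structure on $\Pr^{\L}(k)$ in the $\E_2$ setting is what makes the relative tensor product with the $(\Mod(k),\Mod(\ell))$-bimodule $\Mod(\ell)$ well defined.
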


Finally, we remark that our construction is generically incompatible with possibly existing commutativity structures. The main observation for this is the following proposition. Below it will be used to show that diagram \eqref{square} provided by Theorem~\ref{thm:GLT} does not refine to one of $\E_{2}$-$k$-algebras in concrete examples.

\begin{prop}\label{prop:multiplicative-structures}
Let $(A',B,M)$ be a Milnor context. Suppose that in the diagram 
\[ \begin{tikzcd}
	A \ar[r] \ar[d] & B \ar[d] \\
	A' \ar[r] & A' \wtimes{A}{M} B
\end{tikzcd}\]
provided by Theorem~\ref{thm:GLT}, the maps $A' \leftarrow A \to B$ refine to maps of $\E_2$-$k$-algebras. Then $A' \otimes_A B$ is naturally an $\E_1$-$k$-algebra. If furthermore the whole diagram above refines to one of $\E_2$-rings, then this $\E_1$-$k$-algebra $A' \otimes_A B$ is equivalent to $A' \wtimes{A}{M} B$.
\end{prop}
\begin{proof}
Suppose the diagram commutes as a diagram of $\E_2$-$k$-algebras.
The $\E_{2}$-map $A \to A'$ makes $A'$ an $\E_{1}$-$A$-algebra. The extension of scalars functor $\Mod(A) \to \Mod(B)$ is monoidal, and hence refines to a functor $\Alg_{\E_{1}}(\Mod(A)) \to \Alg_{\E_{1}}(\Mod(B))$, which is again left adjoint to the forgetful functor. In particular,  $A' \otimes_{A} B$ is canonically an $\E_{1}$-$B$-algebra, and the canonical map $A' \otimes_{A} B \to A' \wtimes{A}{M} B$ (the right $B$-linear extension of $A' \to A' \wtimes{A}{M} B$) refines to an $\E_{1}$-map. By Proposition~\ref{prop:underlying-bimodule} it is an equivalence.
\end{proof}

\begin{ex}
In (the proof of) Theorem~\ref{thm:free-vs-sz} below, we show that diagram~\eqref{square} associated  to the Milnor context $(\Z,\Z,\Z\oplus \Z)$ identifies with the commutative diagram
\[ 
\begin{tikzcd}
	\Z\oplus \Omega \Z \ar[r] \ar[d] & \Z \ar[d] \\
	\Z \ar[r] & \Z[x].
\end{tikzcd}
\]
Here $\Z \oplus \Omega \Z$ denotes the trivial square zero extension on $\Omega \Z$, and $\Z[x]$ is the usual polynomial algebra.
We note that all maps appearing in this diagram are canonically $\E_\infty$ and even more, there \emph{exists} an $\E_\infty$-homotopy rendering the diagram commutative. However, the $\E_1$-homotopy which is provided by Theorem~\ref{thm:free-vs-sz} does not even refine to an $\E_2$-homotopy. Indeed, if it would, then by Proposition~\ref{prop:multiplicative-structures} we would find that $\Z[x]$ were equivalent to $\Z \otimes_{\Z\oplus \Omega \Z} \Z$ which, however, is isomorphic to $\Gamma_\Z(x)$, the free divided power algebra, not the polynomial algebra.
\end{ex}

\begin{ex}\label{ex:coordinate-axes-multiplication}
Similarly, the $\E_1$-homotopy in the diagram
\[ \begin{tikzcd}
	\F_p[x,y]/(xy) \ar[r] \ar[d] & \F_p[y] \ar[d] \\
	\F_p[x] \ar[r] & \F_p[t]
\end{tikzcd}\]
obtained in Corollary~\ref{cor:coordinate-axes}, where $\F_{p}[t]$ is the free $\F_{p}$-algebra on a generator of degree 2, does not refine to an $\E_2$-homotopy.  Otherwise $\F_p[t]$ would have to have a divided power structure on its positively graded homotopy, which it doesn't as $t^p \neq 0$. Indeed, the relative tensor product $\F_p[x] \otimes_{\F_p[x,y]/(xy)} \F_p[y]$ is canonically an animated commutative ring, and every animated commutative ring has a divided power structure on the ideal of positively graded elements, see e.g.\ \cite[\S 4]{Richter}.
\end{ex}

\section{Proof of Theorems~\ref{ThmA} \& \ref{ThmB}}\label{sec:3}

In this section we aim to prove Theorem~\ref{ThmB} from the introduction, which asserts that a certain square of $k$-algebras is a pushout square. Using that the functor $A \mapsto \Perf(A)$ from $k$-algebras to small $k$-linear categories detects pushouts (see the proof of Theorem~\ref{thm:thm-B-im-Text} for details), it suffices to prove the analogous statement for small $k$-linear $\infty$-categories. We therefore start by describing pushouts in $\Cat^{k}_\infty$ and prove a categorical version (Theorem~\ref{thm:pushout-in-Cat-perf}) of Theorem~\ref{ThmB}. In this form, our result was already used by Burklund and Levy in \cite[\S 4]{BL} see e.g.\ Theorem 4.11 there.

\subsection{Pushouts in $\Cat^{k}_{\infty}$}
	\label{sec:pushouts-in-Cat-perf}

Consider a span 
\[
\cA' \stackrel{u}{\longleftarrow} \cA_0 \stackrel{v}{\lto} \cB
\]
of small $k$-linear $\infty$-categories. Abusing notation, we denote the induced functors on Ind-completions still by $u$ and $v$, and the Yoneda embeddings $\cA' \to \Ind(\cA')$, $\cB \to \Ind(\cB)$ by the identity.
The functor $v$ admits an Ind-right adjoint $v_* \colon \Ind(\cB) \to \Ind(\cA_{0})$ which we may use to form the oriented fibre product 
\[
\cA' \laxtimes{\id, uv_{*}} \cB := \cA' \laxtimes{\id, \Ind(\cA'), uv_{*}} \cB
\]
whose objects consist of triples $(X,Y,X \to uv_*(Y))$, with $X \in \cA'$ and $Y \in \cB$.
We consider the canonical functor 
\begin{equation}
	\label{eq:functor-from-A0}
i_{0}\colon \cA_{0} \lto \cA' \laxtimes{\id, uv_{*}} \cB, \quad Z \mapsto (u(Z), v(Z), u(Z \to v_{*}v(Z)) )
\end{equation}
and let $\cA \subseteq \cA' \laxtimes{\id, uv_{*}} \cB$ denote the thick subcategory generated by the image of this functor. Furthermore, we let $\cQ$ denote the Verdier quotient of $\cA' \laxtimes{\id, uv_*} \cB$ by the full subcategory $\cA$:
\[
\cQ := (\cA' \laxtimes{\id, uv_{*}} \cB) / \cA
\]

\begin{ex}\label{example:span-tensorizer}
If the span $\cA' \leftarrow \cA_0 \to \cB$ is obtained from a span $A' \leftarrow A_0 \to B$ of $k$-algebras by applying $\Perf(-)$, then the oriented fibre product described above is simply given by the compact objects of oriented fibre product associated to the Milnor context $(A',B,B\otimes_{A_0} A)$ as in Section~\ref{sec:bimodule-construction}, and $\cA$ identifies with $\Perf(A)$. Indeed, by construction $\cA$ is given by the thick subcategory generated by $i_0(A_0)$, which is precisely the object $\Lambda$ considered in Construction~\ref{dfn:lambda-and-A}.
\end{ex}

We let $p \colon \cA' \laxtimes{\id, uv_{*}} \cB  \lto \cQ$ be the canonical functor and denote by $j_{1}$ respectively $j_{2}$ the inclusions 
\begin{align*}
& j_{1}\colon \cA' \lto \cA' \laxtimes{\id, uv_{*}} \cB \\
& j_{2}\colon \cB \lto \cA' \laxtimes{\id, uv_{*}} \cB .
\end{align*}
As in the paragraph following Proposition~\ref{prop:SS}, there is a natural transformation $\tau \colon \Omega j_1u \Rightarrow j_2v$ as indicated in the diagram
\begin{equation}
	\label{diag1}
\begin{tikzcd}
	\cA_{0} \ar[r, "v"] \ar[d, "u"'] & \cB \ar[d,"j_{2}"] & \\
	\cA' \ar[r,"\Omega j_{1}"'] \arrow[ur, Rightarrow, shorten >= 1em, shorten <= 1em,"\tau"] & \cA' \laxtimes{\id, uv_{*}} \cB \,
\end{tikzcd}
\end{equation}
induced by the natural fibre sequence
\[
(0,Y,0) \lto (X,Y,f) \lto (X,0,0).
\]
By construction, $p(\tau)$ is a natural equivalence between functors $\cA_{0} \to \cQ$, and we therefore obtain the commutative diagram
\begin{equation}
	\label{diag:pushout}
\begin{tikzcd}
 \cA_{0} \ar[d, "u"]\ar[r, "v"] & \cB \ar[d, "pj_{2}"] \\ 
 \cA' \ar[r, "p\Omega j_{1}"] & \cQ .
\end{tikzcd}
\end{equation}

The promised categorical version of Theorem~\ref{ThmB} is then the following assertion.

\begin{thm}
	\label{thm:pushout-in-Cat-perf}
Diagram \eqref{diag:pushout} is a pushout diagram in $\Cat^k_{\infty}$.  
\end{thm}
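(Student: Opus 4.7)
The plan is to verify the universal property of the pushout in $\Cat^k_\infty$ directly. Since Ind-completion gives an equivalence between $\Cat^k_\infty$ and the $\infty$-category of compactly generated stable $k$-linear presentable $\infty$-categories with colimit-preserving, compactness-preserving functors, and this equivalence preserves pushouts, it suffices to pass to Ind-completions and show that $\Ind(\cQ)$ is the pushout of the span $\Ind(\cA') \xleftarrow{u} \Ind(\cA_0) \xrightarrow{v} \Ind(\cB)$ in $\Pr^{\L}(k)$. Here $\Ind(\cQ)$ is the Verdier quotient of $\Ind(\cA' \laxtimes{\id, uv_*} \cB)$ by the localizing subcategory $\Ind(\cA)$ generated by the essential image of $i_0$. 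The key reformulation comes from the fibre sequence $j_2 v(Z) \to i_0(Z) \to j_1 u(Z)$, which exhibits $i_0(Z) \simeq \cof(\tau_Z)$. Hence $\Ind(\cA)$ is the localizing subcategory generated by the cofibres of $\tau$, and an exact colimit-preserving functor $\bar F \colon \Ind(\cA' \laxtimes{\id, uv_*} \cB) \to \cD$ factors through $\Ind(\cQ)$ if and only if $\bar F(\tau_Z)$ is an equivalence for all $Z \in \Ind(\cA_0)$.

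The central step is to establish the following \emph{lax pushout} universal property of the oriented fibre product: for any $\cD \in \Pr^{\L}(k)$, restriction along $\Omega j_1$, $j_2$, and $\tau$ induces an equivalence between $\Fun^{\L}_k(\Ind(\cA' \laxtimes{\id, uv_*} \cB), \cD)$ and the $\infty$-category of triples $(H_1, H_2, \gamma)$, where $H_1 \colon \Ind(\cA') \to \cD$ and $H_2 \colon \Ind(\cB) \to \cD$ are exact colimit-preserving $k$-linear functors and $\gamma \colon H_1 u \Rightarrow H_2 v$ is a natural transformation of functors $\Ind(\cA_0) \to \cD$. Granted this, the condition that $\bar F$ inverts $\tau$ corresponds precisely to the condition that $\gamma$ is a natural equivalence, which is the defining data for the pushout in $\Pr^{\L}(k)$ of the span $\Ind(\cA') \leftarrow \Ind(\cA_0) \rightarrow \Ind(\cB)$. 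This yields the theorem.

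The main obstacle is the proof of the lax pushout universal property. The forward direction is $\bar F \mapsto (\Omega \bar F j_1, \bar F j_2, \bar F(\tau))$. For the inverse, one uses the fibre sequence $j_2(Y) \to (X, Y, f) \to j_1(X)$ to see that $\bar F(X, Y, f)$ is determined by $F_1 := \bar F j_1$, $F_2 := \bar F j_2$, and the image in $\cD$ of the connecting morphism $\partial_f \colon F_1(X) \to \Sigma F_2(Y)$. Via the mapping spectrum formula $\map_{\slax}((\Omega X, 0, 0), (0, Y, 0)) \simeq \map_{\Ind(\cA')}(X, uv_*Y)$, the assignment $f \mapsto \partial_f$ is a natural transformation of bifunctors. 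Given $(H_1, H_2, \gamma)$, the inverse sets $F_1 = \Sigma H_1$, $F_2 = H_2$, and defines $\partial_f$ as the suspension of the composite
\[
H_1(X) \xrightarrow{H_1(f)} H_1 uv_*(Y) \xrightarrow{\gamma_{v_*Y}} H_2 vv_*(Y) \xrightarrow{H_2(\epsilon_Y)} H_2(Y),
\]
where $\epsilon \colon vv_* \Rightarrow \id$ is the counit of $v \dashv v_*$. Specializing to $(X, Y, f) = i_0(Z)$, naturality of $\gamma$ together with the triangle identity $\epsilon v \circ v\eta = \id_v$ reduce this composite to $\gamma_Z$; consequently $\bar F$ kills $\cA$ precisely when $\gamma$ is a natural equivalence. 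Verifying that the two constructions are mutually inverse is then a direct coherence computation using the same triangle identities.
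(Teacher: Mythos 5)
Your strategy is genuinely different from the paper's and is sound in outline. The paper never verifies the colimit universal property covariantly: it reduces to $\Pr^{\L}_{\st}$, passes to the diagram of \emph{right adjoints} via $\Pr^{\L}_{\st}\simeq(\Pr^{\mathrm{R}}_{\st})^{\op}$, identifies $\cQ$ with $\ker(s_0)$ inside the oriented fibre product, and then shows by explicit computation (Lemmas~\ref{lemma:equiv-of-oriented-pb}--\ref{lemma:right-adjoint-of-functor-from-A0}) that the resulting square of right adjoints is the honest pullback $\cB\times_{v_*,u_*}\cA'$ in $\widehat{\Cat}_\infty$. The advantage of that route is that the oriented fibre product is \emph{defined} as a (partially lax) limit, so its limit-type universal property is available for free and everything reduces to identifying objects and transformations. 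Your route instead exhibits $\Ind(\cA'\laxtimes{\id,uv_*}\cB)$ as a partially lax \emph{pushout} in $\Pr^{\L}(k)$ and observes that the Verdier quotient by the cofibres of $\tau$ inverts the lax structure map, strictifying the lax cone; this is more conceptual and explains \emph{why} the quotient is the pushout. Your bookkeeping is correct: the identification of the gluing datum as a transformation $H_1u\Rightarrow H_2v$ (rather than $H_1uv_*\Rightarrow H_2$) is legitimate by the mate correspondence for $v\dashv v_*$, and your triangle-identity computation showing that $\partial_{f}$ for $f=u(\eta_Z)$ reduces to $\gamma_Z$ is exactly what makes "$\bar F$ kills $\cA$" equivalent to "$\gamma$ is an equivalence".

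The one real gap is the central "lax pushout universal property" itself. In the $\infty$-categorical setting you cannot construct the inverse equivalence by prescribing $\bar F$ on objects as $\fib(F_1(X)\to\Sigma F_2(Y))$ and declaring the rest "a direct coherence computation": specifying a functor of $\infty$-categories requires a coherent construction, and this equivalence
\[
\Fun^{\L}_k\bigl(\Ind(\cA'\laxtimes{\id,uv_*}\cB),\cD\bigr)\;\simeq\;\{(H_1,H_2,\gamma\colon H_1u\Rightarrow H_2v)\}
\]
is precisely the nontrivial content of the theorem, not a routine verification. It is a true statement --- it is the assertion that the partially lax limit defining the oriented fibre product is simultaneously a partially lax colimit of the left-adjoint diagram in $\Pr^{\L}$, equivalently the universal property of a recollement/semiorthogonal gluing --- but it needs either a citation (e.g.\ to the treatment of partially lax (co)limits in \cite{LNP} or \cite{CDW}) or an actual construction, for instance by dualizing to right adjoints, which is exactly what the paper does. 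You should also record the reduction from $\Pr^{\L}_\omega(k)$ to $\Pr^{\L}(k)$: since $\Pr^{\L}_\omega(k)\subset\Pr^{\L}(k)$ is not full, one must check that the mediating functor out of $\cQ$ preserves compact objects, which follows because the compact objects of $\cQ$ are generated as a thick subcategory by the images of $\cA'{}^\omega$ and $\cB^\omega$.
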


For the proof, we need some preliminary lemmas. We denote by $\Pr^{\L}_\omega(k) \subset\Pr^{\L}(k)$ the subcategory of compactly generated, presentable $k$-linear categories with colimit preserving $k$-linear functors that preserve compact objects. 
As $\Perf(k)$ is rigid, taking the Ind-completion induces an equivalence $\Cat^k_{\infty} \simeq \Pr^{\L}_\omega(k)$; see \cite[Prop.~4.5, 4.9]{HoyoisScherotzkeSibilla}. We may therefore equivalently show that the diagram of Ind-completions $\Ind(\eqref{diag:pushout})$ is a pushout in $\Pr^{\L}_\omega(k)$. So from now on we pass to the Ind-completions of the above categories without mentioning that in the notation. We make use of this passage in the following lemma.

\begin{lemma}
	\label{lemma:equiv-of-oriented-pb}
There is a canonical equivalence
\[
\Phi\colon  \cB \laxtimes{v_{*},u_{*}} \cA'   \stackrel{\simeq}{\lto} \cA' \laxtimes{\id,uv_{*}} \cB
\]
fitting in a commutative diagram
\[
\begin{tikzcd}
 \cA' \ar[d, equal] & \cB \laxtimes{v_{*},u_{*}} \cA' \ar[l, "\pr_{\cA'}"'] \ar[r, "\pr_{\cB}"] \ar[d, "\simeq", "\Phi"']& \cB \ar[d, equal]\\
 \cA' & \cA' \laxtimes{\id,uv_{*}} \cB \ar[l, "\Sigma j_{1*}"']  \ar[r, "\pr_{\cB}"] & \cB,
\end{tikzcd}
\]
where $j_{1*}$ denotes a right adjoint of $j_{1}$.
On objects,  $\Phi$ is given by
\[
(Y, X, v_{*}(Y) \xrightarrow{f} u_{*}(X) ) \mapsto ( \fib(\hat f), Y, \fib(\hat f) \xrightarrow{\mathrm{can}} uv_{*}(Y))
\]
where $\hat f\colon uv_{*}(Y) \to X$ is the adjoint map to $f$.
\end{lemma}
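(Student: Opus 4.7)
The plan is to construct $\Phi$ as the composite of two canonical equivalences, the first using the adjunction $u \dashv u_{*}$ and the second the stability of $\cA'$.

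For the first step, I would apply the second form of Lemma~\ref{lemma:oriented-pb-adjunction} to the functor $u_{*}\colon \cA' \to \Ind(\cA_{0})$, whose left adjoint is $u$. This yields a canonical equivalence
\[
\Phi_{1}\colon \cB \laxtimes{v_{*}, u_{*}} \cA' \xrightarrow{\sim} \cB \laxtimes{uv_{*}, \cA', \id} \cA'
\]
with effect on objects $(Y, X, f\colon v_{*}(Y) \to u_{*}(X)) \mapsto (Y, X, \hat f \colon uv_{*}(Y) \to X)$, where $\hat f$ is the adjoint of $f$.

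For the second step, I would use that in any stable $\infty$-category $\cC$ the full subcategory of cofibre sequences inside $\Fun(\Delta^{2}, \cC)$ admits a canonical rotation autoequivalence $(A \to B \to C) \mapsto (\Omega C \to A \to B)$; this follows from the fact that the two restriction functors to $\Fun(\Delta^{1}, \cC)$ along the consecutive edges $\Delta^{\{0,1\}}$ and $\Delta^{\{1,2\}}$ are both equivalences. Under the identification of $\Fun(\Delta^{1}, \cA')$ with the category of cofibre sequences in $\cA'$ via restriction to $\Delta^{\{0,1\}}$, the oriented fibre products $\cB \laxtimes{uv_{*}, \cA', \id} \cA'$ and $\cA' \laxtimes{\id, uv_{*}} \cB$ correspond, respectively, to the subcategories of cofibre sequences whose term in position $0$, respectively in position $1$, lies in $uv_{*}(\cB)$. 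The rotation autoequivalence matches these two descriptions and yields
\[
\Phi_{2}\colon \cB \laxtimes{uv_{*}, \cA', \id} \cA' \xrightarrow{\sim} \cA' \laxtimes{\id, uv_{*}} \cB, \quad (Y, X, g) \mapsto (\fib(g), Y, \fib(g) \to uv_{*}(Y)).
\]
Setting $\Phi := \Phi_{2} \circ \Phi_{1}$ gives a functor with the required effect on objects.

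Finally, commutativity of the diagram in the statement reduces to two checks. That $\pr_{\cB} \circ \Phi = \pr_{\cB}$ is immediate as the $\cB$-component is preserved at each step. For the other triangle, I would first verify from the pullback formula for mapping spaces that the right adjoint $j_{1*}\colon \cA' \laxtimes{\id, uv_{*}} \cB \to \cA'$ sends $(X', Y, f)$ to $\fib(f)$. Since $\Sigma \fib(f) \simeq \cof(f)$ in a stable $\infty$-category, and $\cof(\fib(\hat g) \to uv_{*}(Y)) \simeq X$ from the cofibre sequence associated to $\hat g\colon uv_{*}(Y) \to X$, we obtain $\Sigma j_{1*}\Phi(Y, X, g) \simeq X = \pr_{\cA'}(Y, X, g)$ as required. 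The main obstacle I anticipate is making the construction of $\Phi_{2}$ fully precise at the $\infty$-categorical level, ensuring that the rotation of cofibre sequences descends to the pullbacks defining the oriented fibre products; this should follow formally from the functoriality of the category of cofibre sequences in the stable $\infty$-category $\cC$.
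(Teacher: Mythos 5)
Your proposal is correct and takes essentially the same route as the paper: first apply the second part of Lemma~\ref{lemma:oriented-pb-adjunction} to trade $u_*$ for its left adjoint $u$, then use stability of $\cA'$ to rotate and swap the two factors, and finally check compatibility with the projections via the identification $\Sigma j_{1*}(X,Y,f)\simeq\cof(f)$. The only difference is cosmetic: you spell out the rotation step (which the paper asserts with ``As $\cA'$ is stable\ldots'') via the autoequivalence of the category of cofibre sequences, and you derive the formula for $j_{1*}$ from the mapping-space pullback rather than citing \cite[Lemma 1.5]{LT}.
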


\begin{proof}
Lemma~\ref{lemma:oriented-pb-adjunction} provides an equivalence
\[
\cB \laxtimes{v_{*},u_{*}} \cA' \simeq \cB \laxtimes{uv_{*},\id} \cA', \quad 
	(Y, X, v_{*}(Y) \xrightarrow{f} u_{*}(X) ) \mapsto (Y,X, uv_{*}(Y) \xrightarrow{\hat f} X)
\]
which is compatible with the projections $\pr_{\cA'}$ and $\pr_{\cB}$.
As $\cA'$ is stable, there is further a canonical equivalence
\[
\cB \laxtimes{uv_{*},\id}  \cA' \simeq \cA' \laxtimes{\id,uv_{*}}  \cB  
\]
given on objects by
\[
(Y, X,   uv_{*}(Y) \xrightarrow{\hat f} X) \mapsto (\fib(\hat f), Y,  \fib(\hat f) \xrightarrow{\mathrm{can}}  uv_{*}(Y) ) .
\]
By \cite[Lemma 1.5]{LT} the right adjoint $\Sigma j_{1*}$ of $\Omega j_1$ is given by the formula $(X,Y,f) \mapsto \cof(f)$. It follows that the above equivalence fits in a commutative diagram
\[
\begin{tikzcd}
 \cA' \ar[d, equal] & \cB \laxtimes{uv_{*},\id} \cA' \ar[l, "\pr_{\cA'}"']\ar[r, "\pr_{\cB}"] \ar[d, "\simeq"]& \cB \ar[d, equal]\\
 \cA' &  \cA' \laxtimes{\id, uv_{*}} \cB \ar[l, "\Sigma j_{1*}"']  \ar[r, "\pr_{\cB}"] & \cB.
\end{tikzcd}
\]
Composing the two equivalences we obtain the lemma.
\end{proof}

We now consider the Ind-completion of diagram \eqref{diag1} (again without adding this in the notation), pass to the diagram of right-adjoints, and use the equivalence constructed in Lemma~\ref{lemma:equiv-of-oriented-pb} to obtain the following diagram.
\[
\begin{tikzcd}
	\cA_{0}  & \cB \ar[l, "v_{*}"'] \arrow[dl, Rightarrow, shorten >= 1em, shorten <= 1em,"\sigma'"'] \\
	\cA' \ar[u, "u_{*}"]  &  \cB \laxtimes{v_{*},u_{*}}  \cA'  \ar[l, "\pr_{\cA'}"] \ar[u, "\pr_{\cB}"']
\end{tikzcd}
\]

\begin{lemma}\label{claim1}
This diagram identifies with the canonical diagram associated to the oriented fibre product of the diagram $\cA' \overset{u_{*}}{\longrightarrow} \cA_{0} \overset{v_{*}}{\longleftarrow}  \cB$. In particular, restricted to the full subcategory of $\cB \laxtimes{v_*,u_*} \cA'$ on the objects $(Y,X,f \colon v_*(Y) \to u_*(X))$ where $f$ is an equivalence, the transformation $\sigma'$ becomes an equivalence, and the resulting diagram is a pullback in $\widehat{\Cat}_{\infty}$, the $\infty$-category of large $\infty$-categories.

\end{lemma}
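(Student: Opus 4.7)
The plan is to identify the diagram in the statement with the canonical diagram associated to the oriented fibre product $\cB \laxtimes{v_{*}, u_{*}} \cA'$ of the cospan $\cB \xrightarrow{v_{*}} \cA_{0} \xleftarrow{u_{*}} \cA'$, and then to deduce the pullback property by restriction to the appropriate full subcategory.

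The identification of the functors is essentially formal. The right adjoints in the Ind-completion of \eqref{diag1} are $u_{*}$, $v_{*}$, $\pr_{\cB}$ (the latter from the mapping-spectrum formula, which collapses $\Map_{\slax}((0, Y, 0), (X', Y', f'))$ to $\Map_{\cB}(Y, Y')$), and $\Sigma j_{1*}$ by \cite[Lemma~1.5]{LT}. The equivalence $\Phi$ of Lemma~\ref{lemma:equiv-of-oriented-pb} then transports $\Sigma j_{1*}$ and $\pr_{\cB}$ to the projections $\pr_{\cA'}$ and $\pr_{\cB}$ of $\cB \laxtimes{v_{*}, u_{*}} \cA'$, matching the underlying functors.

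To identify the transformation $\sigma'$—the Beck--Chevalley mate of $\tau$, whiskered with $\Phi$—with the canonical natural transformation $\alpha \colon v_{*} \pr_{\cB} \Rightarrow u_{*} \pr_{\cA'}$ of the oriented fibre product, whose component at $(Y, X, f \colon v_{*}Y \to u_{*}X)$ is $f$ itself, I would evaluate at a general object. Setting $Z = \Phi(Y, X, f) = (\fib(\hat f), Y, \fib(\hat f) \to uv_{*}Y)$ with $\hat f \colon uv_{*}Y \to X$ the $u \dashv u_{*}$-adjoint of $f$, the composite adjunctions $u \dashv u_{*}$ and $\Omega j_{1} \dashv \Sigma j_{1*}$ rewrite $\sigma'(Z) \colon v_{*}Y \to u_{*}X$ as an element of $\Map_{\slax}(\Omega j_{1} u v_{*} Y, Z)$; by the definition of the mate, this element is the composite
\[
\Omega j_{1} u v_{*} Y \xrightarrow{\tau(v_{*}Y)} j_{2} v v_{*} Y \xrightarrow{j_{2}(\epsilon^{v}_{Y})} j_{2} Y \xrightarrow{\epsilon^{j_{2}}_{Z}} Z,
\]
where $\epsilon^{v}$ and $\epsilon^{j_{2}}$ denote the counits of $v \dashv v_{*}$ and $j_{2} \dashv \pr_{\cB}$. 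Applying $\Sigma j_{1*}$ to the last map yields the canonical projection $uv_{*}Y \to \cof(\fib(\hat f) \to uv_{*}Y) = X$ from the fibre sequence of $\hat f$, i.e.\ $\hat f$ itself. The first two maps become $u\eta_{v_{*}Y}$ followed by $uv_{*}(\epsilon^{v}_{Y})$—as one sees by comparing Puppe sequences—and compose to the identity on $uv_{*}Y$ by the triangle identity for $v \dashv v_{*}$. Hence the $u \dashv u_{*}$-adjoint of $\sigma'(Z)$ is $\hat f$, i.e.\ $\sigma'(Z) = f$, as claimed.

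With this identification, the remaining claims are immediate. The full subcategory $\cC \subseteq \cB \laxtimes{v_{*}, u_{*}} \cA'$ on objects with $f$ invertible is equivalent to the strict pullback $\cB \times_{\cA_{0}} \cA'$ along $v_{*}$ and $u_{*}$ by the universal property of oriented fibre products: a functor $\cD \to \cC$ is the same data as a pair of functors to $\cB$ and $\cA'$ together with a natural equivalence between their images in $\cA_{0}$. On $\cC$ the transformation $\sigma'$ is therefore an equivalence, and the resulting square is a pullback in $\widehat{\Cat}_{\infty}$. The main technical obstacle is the mate computation in the third paragraph; the delicate step is recognising the triangle identity inside $\Sigma j_{1*}$ applied to the fibre-sequence definition of $\tau$, which is where $\tau$'s origin as a connecting map is used.
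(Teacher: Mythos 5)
Your proposal is correct and follows essentially the same route as the paper's proof: identify the right adjoints ($u_*$, $v_*$, $\pr_{\cB}$, $\Sigma j_{1*}$ transported through $\Phi$), express the mate of $\tau$ via the standard formula, observe that $\Sigma j_{1*}$ sends the counit of $(j_2,\pr_{\cB})$ at $\Phi(Y,X,f)$ to $\hat f$ and sends $\tau$ to the unit of $(v,v_*)$, and collapse the remaining composite by a triangle identity to conclude $\sigma'_{(Y,X,f)}=f$. The only difference is presentational — you evaluate the mate objectwise in its adjoint form $FG'Z\to F'G'Z\to Z$, whereas the paper manipulates the unit–counit composite of natural transformations — but the content is identical.
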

\begin{proof}
We first recall that given a pair of adjunctions $(F,G,\epsilon,\eta)$ and $(F',G',\epsilon',\eta')$ between two $\infty$-categories and a natural transformation $\tau\colon F \Rightarrow F'$, the adjoint natural transformation is given by the composite
\[ G' \stackrel{\eta G'}{\Longrightarrow} GFG' \stackrel{G\tau G'}{\Longrightarrow} GF'G' \stackrel{G\epsilon'}{\Longrightarrow} G.\]
Therefore, what we need to calculate is the natural transformation
\[ 
G'\Phi \Longrightarrow GFG'\Phi \Longrightarrow GF'G' \Phi \Longrightarrow G\Phi
\]
where 
\[ 
F = \Omega j_1 u, \quad F' = j_{2}v ,\quad G = u_* \Sigma j_{1*}, \quad \text{ and }  \quad G' = v_* \pr_{\cB}^{2}
\]
and $\Phi$ is the equivalence constructed in Lemma~\ref{lemma:equiv-of-oriented-pb}. 
Here, in order to avoid confusion, we temporarily denote the projection $\cB \laxtimes{v_{*}, u_{*}} \cA' \to \cB$ by $\pr_{\cB}^{1}$ and the projection $\cA' \laxtimes{\id, uv_{*}} \cB \to \cB$ by $\pr_{\cB}^{2}$.
Spelling out all definitions of these functors, we obtain the following canonical equivalences
\[ 
\Sigma j_{1*}j_2 = uv_*, \quad \Sigma j_{1*} \Phi = \pr_{\cA'}, \quad \text{ and } \quad \pr_\cB^{2} \Phi = \pr_{\cB}^{1}
\]
and therefore the map we investigate is given by a composite
\begin{equation} \label{eq:composite}
v_* \pr_\cB^{1} \Longrightarrow u_*uv_* \pr_\cB^{1} \Longrightarrow u_*u(v_*v)v_* \pr_\cB^{1} \Longrightarrow u_* \pr_{\cA'} 
\end{equation}
of natural transformations between functors $\cB \laxtimes{v_*,u_*} \cA' \to \cA_0$,
where the first map is given by the unit of the adjunction $(u,u_*)$ and the second map by the unit of the adjunction $(v,v_*)$. The latter follows by observing that under the equivalence $\Sigma j_{1*} \Omega j_1 u = u$ (recall that $j_1$ is fully faithful), the transformation $\tau$, by definition, induces the unit map
\[ 
u \simeq \Sigma j_{1*} \Omega j_1 u \stackrel{\Sigma j_{1*} \tau u}{\Longrightarrow}  \Sigma j_{1*}j_2 v = uv_*v.
\]

The final map in composite \eqref{eq:composite} arises by applying the functor $u_{*}\Sigma j_{1*}$ to the counit of the adjunction for $(F',G') = (j_{2}v,\pr_\cB^{2} v_*)$, which factors as the counit of $(v,v_*)$ followed by the counit of $(j_2,\pr_\cB^{2})$, on the functor $\Phi$. Therefore, using the above equivalences, the final map in \eqref{eq:composite} is given by the composite
\[ 
u_*uv_*(vv_*)\pr_\cB^{1} \Longrightarrow u_*uv_*\pr_\cB^{1} \stackrel{\alpha}{\Longrightarrow} u_*\pr_{\cA'},
\]
whose first map is induced by the counit of $(v,v_*$).
By the triangle identities, we therefore find that the composite \eqref{eq:composite} we wish to calculate is given by 
\begin{equation}
	\label{eq:second-composite}
v_*\pr_\cB^{1} \Longrightarrow u_*uv_*\pr_{\cB}^{1} \stackrel{\alpha}{\Longrightarrow} u_*\pr_{\cA'}
\end{equation}
where the first map is the unit of $(u,u_*)$. 
Now we describe the natural transformation $\alpha$ which is induced by the counit of $(j_2,\pr_\cB^{2})$ on the functor $\Phi$, or, equivalently, by the counit of the adjunction $(\Phi^{-1}j_{2}, \pr_{\cB}^{1}=\pr^{2}_{\cB} \Phi)$. We first note that on an object $(F, Y, F \to uv_{*}(Y))$ of $\cA' \laxtimes{\id, uv_{*}} \cB$, the counit of the adjunction $(j_{2}, \pr_{\cB}^{2})$ is the canonical map $(0,Y,0) \to (F,Y,  F \to uv_*(Y))$. Using Lemma~\ref{lemma:equiv-of-oriented-pb}, we find that the counit of the adjunction $(\Phi^{-1}j_{2}, \pr_{\cB}^{1})$ on an object $(Y, X,  f\colon v_{*}(Y) \to u_{*}(X))$ is the canonical map
\[
(0, Y, 0) \lto ( \fib(\hat f), Y, \fib(\hat f) \stackrel{\mathrm{can}}{\lto} uv_{*}(Y) ),
\]
where as before $\hat f$ is the adjoint map of $f$. Applying $\Sigma j_{1*}$, which, as we recall, is the cofibre functor on the arrow component of the oriented fibre product, to this natural transformation gives the canonical map
$uv_{*}(Y) \to \cof ( \can)$. This map naturally identifies with the map $\hat f\colon uv_{*}(Y) \to X$. Thus, applying $u_{*}$ and precomposing with the unit of $(u,u_{*})$ we obtain the map $f$. Thus we finally see that the composite \eqref{eq:composite} that we wish to calculate and that equals \eqref{eq:second-composite} 
is the canonical transformation which to an object $(X,Y,f\colon v_*(Y) \to u_*(X))$ associates the morphism $f$. The claim that the resulting square obtained by restricting along the inclusion
\[ 
\cB \widetilde{\laxtimes{v_*,u_*}} \cA' \subseteq \cB \laxtimes{v_*,u_*} \cA' ,
\]
where the tilde denotes the full subcategory on objects where the morphism $f\colon v_*(Y) \to u_*(X)$ is an equivalence, is a pullback follows immediately.
\end{proof}

\begin{lemma}
	\label{lemma:right-adjoint-of-functor-from-A0}
The right adjoint of the functor $i_{0}\colon \cA_{0} \to \cA' \laxtimes{\id,uv_{*}} \cB$ is given by
\[
s_{0} \colon (X,Y, X \xrightarrow{f} uv_{*} (Y)) \mapsto u_{*}(X) \times_{u_{*}uv_{*}(Y)} v_{*}(Y),
\]
where the map $u_{*}(X) \to u_{*}uv_{*}(Y)$ is $u_{*}(f)$ and the map $v_{*}(Y) \to u_{*}uv_{*}(Y)$ is the unit of the adjunction $(u,u_{*})$.
\end{lemma}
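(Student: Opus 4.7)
The plan is to directly verify the adjunction by computing mapping spaces in the oriented fibre product and rewriting them via the adjunctions $(u, u_*)$ and $(v, v_*)$. Since objects of $\cA_0$ are mapped to objects of the form $(u(Z), v(Z), u(\eta^v_Z))$, where $\eta^v$ denotes the unit of $(v,v_*)$, the key input is the pullback formula for mapping spaces in oriented fibre products recalled in Section~\ref{sec:2}.

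First, I would apply this formula to obtain, for any $Z \in \cA_0$ and $(X, Y, f) \in \cA' \laxtimes{\id, uv_*} \cB$, a canonical pullback square
\[
\begin{tikzcd}
\Map_{\slax}(i_0(Z), (X, Y, f)) \ar[r] \ar[d] & \Map_\cB(v(Z), Y) \ar[d] \\
\Map_{\cA'}(u(Z), X) \ar[r, "f_*"] & \Map_{\cA'}(u(Z), uv_*(Y))
\end{tikzcd}
\]
in which the right vertical map sends $g \colon v(Z) \to Y$ to the composite $u(\eta^v_Z)$ followed by $uv_*(g)$. Next, I would apply the adjunction equivalences $\Map_{\cA'}(u(Z), -) \simeq \Map_{\cA_0}(Z, u_*(-))$ and $\Map_\cB(v(Z), -) \simeq \Map_{\cA_0}(Z, v_*(-))$ to each term, so that the horizontal and vertical comparison maps become maps of mapping spaces out of $Z$ in $\cA_0$. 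The horizontal map becomes postcomposition with $u_*(f)$ by naturality of the adjunction counit for $(u, u_*)$.

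The main point to check is that, under these identifications, the right vertical map becomes postcomposition with the unit $\eta^u_{v_*(Y)} \colon v_*(Y) \to u_*uv_*(Y)$ of $(u, u_*)$. This follows from a short naturality argument: a map $g \colon v(Z) \to Y$ has adjoint $\tilde g = v_*(g)\circ \eta^v_Z \colon Z \to v_*(Y)$, and by naturality of $\eta^u$ applied to $\tilde g$ one obtains $\eta^u_{v_*(Y)} \circ \tilde g = u_*u(\tilde g) \circ \eta^u_Z$, whose right-hand side is precisely the $(u, u_*)$-adjoint of $u(\tilde g) = uv_*(g)\circ u(\eta^v_Z)$. Thus the pullback of mapping spaces identifies with
\[
\Map_{\cA_0}(Z, u_*(X)) \times_{\Map_{\cA_0}(Z, u_*uv_*(Y))} \Map_{\cA_0}(Z, v_*(Y)),
\]
where the maps are postcomposition with $u_*(f)$ and $\eta^u_{v_*(Y)}$, respectively.

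Finally, since $\Map_{\cA_0}(Z, -)$ preserves the pullback in $\cA_0$ (here and throughout the lemma we work in the Ind-completion, which is complete), this pullback of mapping spaces is naturally equivalent to $\Map_{\cA_0}\bigl(Z,\, u_*(X) \times_{u_*uv_*(Y)} v_*(Y)\bigr)$. Naturality of all the identifications in $(X, Y, f)$ then upgrades the resulting equivalence to an adjunction, exhibiting $s_0$ as the right adjoint of $i_0$. The only conceptually delicate step is the identification of the right vertical comparison map, as this requires tracking both units $\eta^v$ and $\eta^u$ through the pullback formula; the rest is formal manipulation of adjunctions and limits.
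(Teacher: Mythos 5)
Your proof is correct and follows essentially the same route as the paper: the paper's own argument also reads the adjunction off from the pullback formula for mapping spaces in the oriented fibre product, combined with the adjunction equivalences for $(u,u_*)$ and $(v,v_*)$. You simply spell out the identification of the right vertical comparison map (via the naturality of $\eta^u$), which the paper leaves implicit.
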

\begin{proof}
Consider the map induced by the unit transformations of $(u,u_{*})$ and $(v,v_{*})$:
\[
X \lto u_{*}u(X) \times_{u_{*}uv_{*}v(X)} v_{*}v(X) = s_{0}i_{0}(X).
\]
From the formula for mapping spaces in the oriented fibre product, one reads off the desired equivalence
\[
\Map_{\slax}( i_0(X), (Y,Z, f) ) \simeq \Map_{ \cA_{0}} (X, s_{0}(Y,Z,f) )
\]
as indicated in the following pullback diagram:
\[
\begin{tikzcd}[column sep = 0cm]
\Map_{\slax}( (u(X), v(X), u(X) \to uv_{*}v(X)), (Y,Z, f) ) \ar[d]\ar[r] & \Map_{\cB}(v(X), Z) \simeq \Map_{\cA_{0}}(X, v_{*}(Z)) \ar[d]\\ 
\Map_{\cA_{0}}(X, u_{*}(Y)) \simeq \Map_{\cA'}(u(X), Y) \ar[r] & \Map_{\cA'}(u(X), uv_{*}(Z)) \simeq \Map_{\cA_{0}}(X, u_{*}uv_{*}(Z)) 
\end{tikzcd}
\]
\end{proof}

\begin{proof}[Proof of Theorem~\ref{thm:pushout-in-Cat-perf}]
As indicated earlier, the functor $\Ind\colon \Cat^k \to \Pr^{\L}_{\omega}(k)$ is an equivalence \cite[Prop.~4.5 \& 4.9]{HoyoisScherotzkeSibilla}, so we may equivalently show that the diagram of Ind-completions 
\[ \begin{tikzcd}
	\cA_0 \ar[r,"v"] \ar[d,"u"] & \cB \ar[d,"pj_2"] \\
	\cA' \ar[r,"p\Omega j_1"] & \cQ
\end{tikzcd}\]
is a pushout diagram in $\Pr_{\omega}^{\L}(k)$. Note that given a functor $\varphi\colon \cQ \to \cT$ in $\Pr^{\L}(k)$ such that both restrictions to $\cA'$ and $\cB$ preserve compact objects, then so does the functor $\varphi$. This is because the compact objects of $\cQ$ are generated (as a thick subcategory) by the images of compact objects of $\cA'$ and $\cB$, and compact objects in $\cT$ form a thick subcategory of $\cT$. Therefore, it suffices to show that the above square is a pushout in $\Pr^{\L}(k)$. 
As $\Pr^{\L}_{\st}$ is closed symmetric monoidal \cite[Rem.~4.8.1.18]{HA}, the tensor product commutes with colimits separately in each variable. Hence the forgetful functor $\Pr^{\L}(k) = \Mod_{\Mod(k)}(\Pr^{\L}_{\st}) \to \Pr^{\L}_{\st}$ preserves colimits \cite[Cor.~4.2.3.5]{HA}. As this forgetful functor is also conservative \cite[Cor.~4.2.3.2]{HA}, it is enough to prove that the above square is a pushout diagram in $\Pr^{\L}_{\st}$. 
Using the equivalence $\Pr^{\L}_{\st} \simeq (\Pr^{\mathrm{R}}_{\st})^{\op}$ (which fixes objects and sends a left adjoint functor to its right adjoint; cf.~\cite[Cor.~5.5.3.4]{HTT}), we may equivalently show that the diagram of right adjoints 
\[\begin{tikzcd}
	\cQ \ar[r,"\pr_{\cB}p_*"] \ar[d,"\Sigma j_{1*}p_*"'] & \cB \ar[d,"v_*"] \\
	\cA' \ar[r,"u_*"] & \cA_0
\end{tikzcd}\]
is a pullback diagram in $\Pr^{\mathrm{R}}_{\st}$. As limits in $\Pr^{\mathrm{R}}_{\st}$
 are computed in $\widehat{\Cat}_\infty$ (see~\cite[Thm.~4.4.3.18]{HTT} and \cite[Thm.~1.1.4.4]{HA}), it is finally enough to prove that the above square is a pullback diagram of $\infty$-categories.

Recall that $\cQ$ is defined as the Verdier quotient $(\cA' \laxtimes{\id, uv_{*}} \cB) / \cA$, where $\cA$ is the localizing subcategory of the oriented fibre product generated by the image of the canonical functor $i_{0} \colon \cA_0 \to \cA' \laxtimes{\id,uv_*} \cB$. The inclusion 
\[
 \cA  \hookrightarrow \cA' \laxtimes{\id,uv_{*}}  \cB
\]
admits a right adjoint $s$, and it is a general fact that $p_{*}$ identifies $\cQ$ with the subcategory $\ker(s) \subseteq  \cA' \laxtimes{\id,uv_{*}}  \cB$. 
By construction, the essential image of the functor $\cA_{0} \to \cA$ induced by $i_{0}$ generates the target under colimits. We deduce that its right adjoint is conservative, hence the kernel of $s$ coincides with the kernel of the right adjoint $s_{0}$ of $i_{0}$. 
We show below that the equivalence $\Phi$ from Lemma~\ref{lemma:equiv-of-oriented-pb} restricts to an equivalence
\begin{equation}
	\label{eq1}
 \cB \laxtimes{v_{*},u_{*}} \cA' \supseteq \cB \times_{v_{*},u_{*}} \cA'  \overset{\simeq}{\lto}   \ker(s_{0})   \subseteq \cA' \laxtimes{\id, uv_{*}}  \cB 
\end{equation}
where we view the pullback on the left-hand side as the full subcategory of the oriented fibre product spanned by the objects $(Y,X,f)$ with $f$ an equivalence. Together with Lemma~\ref{claim1} this proves the assertion.

Now we prove \eqref{eq1}. Obviously, $\Phi$ restricts to an equivalence $\ker(s_{0}\circ\Phi) \stackrel{\simeq}{\to} \ker(s_{0})$. It therefore suffices to determine $\ker(s_0\circ \Phi)$. 
So let $(Y,X,v_*(Y) \stackrel{f}{\to} u_*(X))$ be an object of $ \cB \laxtimes{v_{*},u_{*}} \cA'$. By the formula for $\Phi$ in Lemma~\ref{lemma:equiv-of-oriented-pb} and by
Lemma~\ref{lemma:right-adjoint-of-functor-from-A0} we then have a solid pullback diagram together with the dashed commutative diagram, making the lower horizontal composite a fibre sequence:
\[
\begin{tikzcd}
 s_0(\Phi(Y,X,f)) \ar[d]\ar[r] & v_{*}(Y) \ar[d, "\epsilon_{u}(v_{*}(Y))"] \ar[dr, dashed, bend left,"f"] \\ 
 u_{*}(\fib(\hat f)) \ar[r, "u_{*}(\can)"] & u_{*}uv_{*}(Y) \ar[r,dashed,"u_*(\hat{f})"] & u_*(X)
\end{tikzcd}
\]
As before, $\hat{f}\colon uv_{*}(Y) \to X$ is the adjoint map of $f$. We deduce a canonical equivalence $s_0(\Phi(Y,X,f)) \simeq \fib(f)$. Thus $(Y,X,f)$ belongs to $\ker(s_{0}\circ\Phi)$ if and only of $f$ is an equivalence. This proves the remaining claim.
\end{proof}

\begin{cor}\label{cor:categorical-thmA}
Let 
\[
\begin{tikzcd}
 \cA_{0} \ar[d, "u"']\ar[r, "v"] & \cB \ar[d] \\ 
 \cA' \ar[r] & \cQ 
\end{tikzcd}
\]
be a pushout square of small, $k$-linear categories. As before, let $\cA \subseteq \cA' \laxtimes{\id, uv_{*}} \cB$ be the thick subcategory generated by the image of the canonical functor $\cA_{0} \to \cA' \laxtimes{\id, uv_{*}} \cB$. 
Then the induced square
\[
\begin{tikzcd}
 \cA \ar[d]\ar[r] & \cB \ar[d] \\ 
 \cA' \ar[r] & \cQ 
\end{tikzcd}
\]
is a motivic pullback square.
\end{cor}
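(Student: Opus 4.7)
The plan is to combine the pushout identification from Theorem~\ref{thm:pushout-in-Cat-perf} with the standard semi-orthogonal decomposition of the oriented fibre product.

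First, I would apply Theorem~\ref{thm:pushout-in-Cat-perf} to the span $\cA' \stackrel{u}{\longleftarrow} \cA_0 \stackrel{v}{\lto} \cB$. This furnishes a pushout diagram
\[
\begin{tikzcd}
	\cA_{0} \ar[d, "u"']\ar[r, "v"] & \cB \ar[d] \\
	\cA' \ar[r] & (\cA' \laxtimes{\id, uv_{*}} \cB)/\cA
\end{tikzcd}
\]
in $\Cat_{\infty}^{k}$. Since by hypothesis the square of the corollary is also a pushout and since both pushouts have the same span, the universal property yields a canonical equivalence $(\cA' \laxtimes{\id, uv_{*}} \cB)/\cA \simeq \cQ$ compatible with all structure maps. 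In particular, one obtains an exact sequence of small, idempotent complete, $k$-linear $\infty$-categories
\[
\cA \lto \cA' \laxtimes{\id, uv_{*}} \cB \lto \cQ.
\]

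Next, I would invoke the standard semi-orthogonal decomposition of the oriented fibre product. The canonical fibre sequence $(0,Y,0) \to (X,Y,f) \to (X,0,0)$ assembles into a Verdier sequence $\cB \to \cA' \laxtimes{\id, uv_{*}} \cB \to \cA'$ which is split by the inclusion $j_{1}\colon X \mapsto (X,0,0)$. Applying any $k$-localizing invariant $E$ thus produces a canonical equivalence
\[
E(\cA' \laxtimes{\id, uv_{*}} \cB) \simeq E(\cA') \oplus E(\cB).
\]
Combining this splitting with the cofibre sequence obtained from the Verdier sequence above yields a fibre sequence
\[
E(\cA) \lto E(\cA') \oplus E(\cB) \lto E(\cQ)
\]
whose right-hand map, up to sign, has components $E(\cA' \to \cQ)$ and $E(\cB \to \cQ)$. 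This identifies $E(\cA)$ with the pullback $E(\cA') \times_{E(\cQ)} E(\cB)$, which is exactly the claim.

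The only subtle point is to verify that the resulting equivalence $E(\cA) \simeq E(\cA') \times_{E(\cQ)} E(\cB)$ realizes the canonical comparison map determined by the commutative square of the corollary rather than some twist thereof. This is a formal check once the above two ingredients are in place and follows the same pattern as the argument on \cite[p.~888]{LT}.
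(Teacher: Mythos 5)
Your proof is correct and follows essentially the same route as the paper: the paper likewise deduces the equivalence $(\cA' \laxtimes{\id, uv_{*}} \cB)/\cA \simeq \cQ$ from Theorem~\ref{thm:pushout-in-Cat-perf} and then compresses your second half (the split semi-orthogonal decomposition $E(\cA'\laxtimes{\id,uv_*}\cB)\simeq E(\cA')\oplus E(\cB)$ combined with the localization fibre sequence, plus the identification of the comparison map) into the citation of \cite[p.~888]{LT}. No gaps.
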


\begin{proof}
By Theorem~\ref{thm:pushout-in-Cat-perf} there is an equivalence $(\cA' \laxtimes{\id, uv_{*}} \cB)/\cA \simeq \cQ$ and arguing again as in \cite[p.~888]{LT} the resulting second square in the statement is a motivic pullback square.
\end{proof}

Finally, we record here the following consequence of Theorem~\ref{thm:pushout-in-Cat-perf}. To that end, let 
\begin{equation}\label{diagram:categories}
\begin{tikzcd}
	\cA \ar[r,"v"] \ar[d,"u"] & \cB \ar[d,"g"] \\
	\cA' \ar[r,"f"] & \cB' 
\end{tikzcd}
\end{equation}
be a commutative diagram in $\Cat_\infty^k$. Associated to the square \eqref{diagram:categories}, there is the canonical exchange transformation $\alpha\colon uv_* \to f_* g$. We say that \eqref{diagram:categories} is \emph{Ind-adjointable} if $\alpha$ is an equivalence.
\begin{cor}
Any Ind-adjointable bicartesian square \eqref{diagram:categories} is a motivic pullback square.
\end{cor}
\begin{proof}
Postcomposition with $\alpha$ induces a functor $\cA' \laxtimes{uv_*} \cB \to \cA' \laxtimes{g_*f} \cB \simeq \cA' \laxtimes{\cB'} \cB$ which participates in the following commutative diagram.
\[\begin{tikzcd}
	\cA \ar[r,"i"] \ar[d] & \cA' \laxtimes{uv_*} \cB \ar[d] \\
	\cA' \times_{\cA} \cB \ar[r] & \cA' \laxtimes{\cB'} \cB 
\end{tikzcd}\]
As a result of Ind-adjointability, the right vertical functor is an equivalence, and since \eqref{diagram:categories} is cartesian, the left vertical map is an equivalence. Consequently, $i$ is fully faithful and the result follows then from Corollary~\ref{cor:categorical-thmA} applied to the bicartesian square \eqref{diagram:categories}.
\end{proof}

\begin{rem}\label{rem:adjointable-squares}
Let us consider the following examples.
\begin{enumerate}
\item Consider a bicartesian square where one entry is $0$. Such squares are adjointable, and are motivic pullback squares either by additivity of localizing invariants (the case where $\cA$ or $\cB'$ is $0$) or by definition (when $\cA' = 0$, the square reduces to a bifibre sequence).
\item Consider a cartesian square where two parallel functors are Karoubi projections, that is, such that the fibre sequences obtained by adding the kernel of these functors are also cofibre sequences. By pushout pasting, any such square is cocartesian and a motivic pullback square as a consequence of the definitions of localizing invariants. Moreover, any such square is Ind-adjointable as follows e.g.\ from \cite[Prop.\ A.1.18]{CDHII}.
\item Dually, consider a cocartesian square where two parallel functors are fully faithful. Expanding out this square to include the Verdier quotients of the respective subcategories, one finds that the resulting sequences are bifibre sequences. Again, it follows that such a square is cartesian and a motivic pullback square, as well as Ind-adjointable as follows again from \cite[Prop.\ A.1.18]{CDHII}.
\end{enumerate}

Finally, we note that \cite[Theorem 3.4.3]{BachmannCatMilnor} also gives examples of motivic pullback squares, namely adjointable Milnor squares\footnote{What we call adjointable is called \emph{satisfying base-change} in loc.\ cit.}. By definition, these are certain squares of compactly generated stable $\infty$-categories, and combining the above arguments with the arguments for the proof of \cite[Theorem 3.4.3]{BachmannCatMilnor} it follows that adjointable Milnor squares give rise to Ind-adjointable bicartesian squares in $\Cat_\infty^\perf$ upon passing to compact objects.
\end{rem}

\subsection{The $\odot$-ring as a pushout}
	\label{sec:pushout-rings}

We now prove Theorem \ref{ThmB} from the introduction. 
So assume given a Milnor context $(A',B,M)$, and let $A = A' \boxtimes_{M} B$ be the $k$-algebra given by Theorem~\ref{thm:GLT} (or Construction~\ref{dfn:lambda-and-A}). Recall that a tensorizer for $(A',B,M)$ consists of $k$-algebra maps $A_0 \to A'$ and $A_0 \to B$ together with a commutative diagram 
\[ \begin{tikzcd}
	A_0 \ar[r] \ar[d] & B \ar[d] \\
	A' \ar[r] & M
\end{tikzcd}\]
of $(A_0,A_0)$-bimodules, such that the induced $(B,A')$-bimodule map $B\otimes_{A_0} A' \to M$ is an equivalence. Unravelling the definitions, a tensorizer is equivalently described by a morphism of Milnor contexts $(A_{0},A_{0},A_{0}) \to (A',B,M)$ such that the induced map $B\otimes_{A_{0}} A' \to M$ is an equivalence, see Definition~\ref{def:map-of-milnor-contexts}. By Lemma~\ref{lemma:map-of-milnor-contexts}, a tensorizer induces a map $A_{0} \to A$ of $k$-algebras.
By abuse of notation, we refer to $A_{0}$ itself as a tensorizer.
For convenience, we state here again Theorem~\ref{ThmB} from the introduction.

\begin{thm}\label{thm:thm-B-im-Text}
Let $(A',B,M)$ be a Milnor context with tensorizer $A_{0}$. Then, the canonical commutative square 
\[
\begin{tikzcd}
	A_0 \ar[r] \ar[d] & B \ar[d] \\
	A' \ar[r] & A' \wtimes{A}{M} B
\end{tikzcd}
\]
obtained from Theorem~\ref{thm:GLT} and the map $A_{0} \to A$ is a pushout diagram in $\Alg(k)$.
\end{thm}
\begin{proof}
We first observe that the functor $\Perf(-)\colon \Alg(k) \to \Cat^k_{\infty}$ factors as the composite $\Alg(k) \to (\Cat^k_{\infty})_{\ast/} \to \Cat^k_{\infty}$, where the first functor sends $A$ to the pair $(\Perf(A),A)$. The first functor is a fully faithful left adjoint, its right adjoint is given by taking $(\cC,c)$ to $\End_\cC(c)$. The second functor preserves contractible colimits and is conservative. In total, the composite $\Perf(-)\colon \Alg(k) \to \Cat^k_{\infty}$ is conservative and preserves pushouts. Therefore, it suffices to prove that the diagram 
\[ \begin{tikzcd}
	\Perf(A_0) \ar[r] \ar[d] & \Perf(B) \ar[d] \\
	\Perf(A') \ar[r] & \Perf(A' \wtimes{A}{M} B) 
\end{tikzcd}\]
is a pushout. To do so, we consider the diagram as constructed in Secton~\ref{sec:bimodule-construction}:
\[\begin{tikzcd}
	\Perf(A_0) \ar[dr] \ar[ddr, bend right ] \ar[drr, bend left] & & \\ 
		& \Perf(A) \ar[r] \ar[d] & \Perf(B) \ar[d] \\
		& \Perf(A') \ar[r] & \Perf(A') \laxtimes{M} \Perf(B)
\end{tikzcd}\]
Here, we use the assumption that $A_0 \to A$ is a tensorizer to see that the oriented fibre product appearing here is indeed the one associated to the span $\Perf(A') \leftarrow \Perf(A_0) \to \Perf(B)$ in Section~\ref{sec:bimodule-construction}, see Example~\ref{example:span-tensorizer}. Theorem~\ref{thm:pushout-in-Cat-perf} shows that the big square in the above diagram induces the following pushout in $\Cat^k_\infty$
\[\begin{tikzcd}
	\Perf(A_0) \ar[r] \ar[d] & \Perf(B) \ar[d] \\
	\Perf(A') \ar[r] & \big[\Perf(A') \laxtimes{M} \Perf(B)\big]/\Perf(A)
\end{tikzcd}\]
Finally, we use that $\big[\Perf(A') \laxtimes{M} \Perf(B)\big]/\Perf(A) \simeq \Perf(A' \wtimes{A}{M} B)$ and under this equivalence, the above square is the one we wish to show is a pushout.
\end{proof}

\subsection{Some complements on tensorizers and the $\odot$-ring}

In this subsection we briefly discuss existence and uniqueness of tensorizers and comment on the asymmetry of the $\wtimes{}{}$-ring.

\begin{ex}[Tensorizers need not exist]
In general, a tensorizer for a given Milnor context need not exist. Indeed, we claim that the Milnor context provided by the pullback square 
\[ \begin{tikzcd}
	\Z \ar[r] \ar[d] & \Z[y] \ar[d] \\
	\Z[x] \ar[r] & \Z[t^{\pm 1}],	
\end{tikzcd}
\]
where the maps send $x$ to  $t$ and $y$ to $t^{-1}$, does not admit a tensorizer.
\end{ex}
\begin{proof}
Aiming for a contradiction, suppose a tensorizer $A_0$ is given and let $A_{0} \to \Z$ be the induced map of $\SS$-algebras. We then consider the composite 
\[ 
\Z[x] \otimes_\SS \Z[y] \to \Z[x] \otimes_{A_0} \Z[y] \to \Z[x] \otimes_\Z \Z[y] 
\]
induced by the maps $\SS \to A_0 \to \Z$. It is easy to see that the displayed composite is an isomorphism on $\pi_0$ because the map $\SS \to \Z$ is. Furthermore, by assumption the induced map $\Z[x] \otimes_{A_{0}} \Z[y] \to \Z[t^{\pm 1}]$ is an equivalence, and under this equivalence, the effect on $\pi_0$ of the first map is given by the canonical composite $\Z[x,y] \to \Z[t^{\pm 1}]$ sending $x$ to $t$ and $y$ to $t^{-1}$. This map is not injective, contradicting the above composite to be an isomorphism on $\pi_0$.
\end{proof}

\begin{rem}
It was pointed out by an anonymous referee that in the situation above, a \emph{categorical tensorizer} does exist, i.e.\ a small stable $\infty$-category $\cA_0$ fitting into an Ind-adjointable commutative diagram as follows.
\[ \begin{tikzcd}
	\cA_0 \ar[r] \ar[d] & \Perf(\Z[y]) \ar[d] \\
	\Perf(\Z[x]) \ar[r] & \Perf(\Z[t^{\pm1}])
\end{tikzcd}\]
Indeed, one can simply take $\cA_0$ to be the pullback, i.e.\ $\Perf(\P^1_\Z)$; that this results in an Ind-adjointable square is again a consequence of \cite[Prop.\ A.1.18]{CDHII}. As argued in Remark~\ref{rem:adjointable-squares}, choosing $\cA_0=\Perf(\P^1_\Z)$ results is a cocartesian square. 
On the other hand, in \cite[Prop.~4.1]{LT} we computed the $\odot$-ring in this situation to be $\Z\langle x,y\rangle/(yx-1)$ whose category of perfect modules is not equivalent to $\Perf(\Z[t^{\pm 1}])$ (as follows, e.g., from Cor.~4.2 there).
This implies that even when a categorical tensorizer for a pullback square of $k$-algebras exists, $\Perf(\wtimes{}{})$ is not in general the pushout over the tensorizer. 
\end{rem}

\begin{ex}[Tensorizers are not unique]
Assume that $A_{0}$ is a tensorizer for a given Milnor context $(A', B, M)$. We claim that if $C \to A_{0}$ is a homological epimorphism\footnote{Such maps were called Tor-unital in \cite{Tamme:excision} and \cite{LT}.}, i.e.~the multiplication map $A_{0} \otimes_{C} A_{0} \to A_{0}$ is an equivalence, then also $C$ is a tensorizer for that Milnor context. Indeed, we simply compute
\[
B \otimes_{C} A' \simeq B \otimes_{A_{0}} A_{0} \otimes_{C}  A_{0} \otimes_{A_{0}} A' \simeq B \otimes_{A_{0}} A_{0} \otimes_{A_{0}} A' \simeq B \otimes_{A_{0}} A'.
\]
As another example, consider a discrete commutative ring $k$. Then as $(k,k)$-bimodule we have
\[
k \otimes_{k[x,y]} k \simeq k \oplus \Sigma k^{\oplus 2} \oplus \Sigma^{2} k
\]
so that $k[x,y]$ is a tensorizer for the Milnor context $(k,k, M)$, where $M =  k \oplus \Sigma k^{\oplus 2} \oplus \Sigma^{2} k$. On the other hand, we can consider the tensor algebra $T_{k}(N)$ on the $(k,k)$-bimodule $N = k^{\oplus 2} \oplus \Sigma k$. Then 
Lemma~\ref{lem:tensor-product-over-tensor-alg} in the next section gives a canonical equivalence
\[
k \otimes_{T_{k}(N)} k \simeq k \oplus \Sigma N \simeq M
\]
so that $T_{k}(N)$ is another tensorizer for the Milnor context $(k,k,M)$. In addition, one can check that the canonical map $T_{k}(N) \to k[x,y]$ is not a homological epimorphism. Indeed, suppose it were. Then we find that
\[ k[x,y] \otimes_{T_k(N)} k \simeq \big( k[x,y] \otimes_{T_k(N)} k[x,y] \big) \otimes_{k[x,y]} k \simeq k.\]
Now we consider the fibre sequence
\[ T_k(N) \otimes_k N \lto T_k(N) \lto k \]
and apply $k[x,y] \otimes_{T_k(N)} (-)$ to obtain the fibre sequence
\[ k[x,y] \otimes_k N \lto k[x,y] \lto k \]
which is in contradiction to the fact that $\pi_1(k[x,y] \otimes_k N) \cong k[x,y] \neq 0$.
\end{ex}

\begin{rem}
In general, the $k$-algebra $A' \wtimes{A}{B'}B$ associated to a pullback square of $k$-algebras in \cite{LT} depends on the chosen orientation of the oriented fibre product. In other words, in general, as far as we understand there is no canonical comparison between the rings
\[ A' \wtimes{A}{B'} B \quad \text{ and} \quad B \wtimes{A}{B'} A'.\]
This becomes more apparent in the generality of Milnor contexts: given one such $(A',B,M)$, the symbol $B \wtimes{A}{M} A'$ does not even make sense. However, let us assume that the base ring $k$ is $\E_\infty$, and assume given a Milnor context $(A',B,B')$ associated to a pullback diagram of $\E_2$-$k$-algebras and a tensorizer $A_0$ for which the map $A_{0} \to A$ is also $\E_2$. Then the two maps of spectra
\[ 
B \otimes_{A_0} A' \to B' \quad \text{ and } \quad A' \otimes_{A_0} B \to B' 
\] 
are equivalent, and hence $A_{0}$ is also tensorizer for the Milnor context $(B, A', B')$.
As a result, in this case we deduce the equivalences
\[ 
A' \wtimes{A}{B'} B \simeq A' \amalg_{A_0} B \simeq B \amalg_{A_0} A' \simeq B \wtimes{A}{B'} A'
\]
by Theorem~\ref{ThmB} and the symmetry of the pushout.
\end{rem}

\section{Examples and Applications}
	\label{sec:examples}

\subsection{Free algebras and trivial square zero extensions.} 
In this section we describe, for a localizing invariant $E$, a general relation between the $E$-theory of free algebras and trivial square zero extensions. We fix a $k$-algebra $C$. Recall that there is a functor $\Alg(k)_{C/} \to \BMod(C,C)$ which takes $C \to D$ to $D$, viewed as a $(C,C)$-bimodule. This functor has a left adjoint given by the tensor algebra $T_C(-)$: 
\[ \BMod(C,C) \lto \Alg(k)_{C/} \, , \quad M \mapsto T_C(M) = \bigoplus_{n\geq 0} M^{\otimes_C^n} .\]
On the other hand, one can stabilize the $\infty$-category $\Alg(k)_{/C}$. This stabilization turns out to be equivalent to $\BMod(C,C)$ \cite[paragraph before Remark 7.4.1.12]{HA}, and comes with a right adjoint forgetful functor 
\[ \BMod(C,C) \lto \Alg(k)_{/C}\quad,\quad M \mapsto C\oplus M \]
where the latter is called the trivial square zero extension of $C$ by $M$. We observe that the map $0 \to M$ of $(C,C)$-bimodules provides a map $C \to C\oplus M$ in $\Alg(k)_{/C}$, so the trivial square zero extension is in particular canonically a $(C,C)$-bimodule. As such, it is in fact equivalent to $C \oplus M$, so we allow ourselves the abuse of notation writing also $C \oplus M$ for the ring.

\begin{thm}\label{thm:free-vs-sz}
Let $C$ be a $k$-algebra and $M$ a $(C,C)$-bimodule. Then there is a canonical commutative square of $k$-algebras
\[ \begin{tikzcd}
	C \oplus \Omega M \ar[r] \ar[d] & C \ar[d] \\
	C \ar[r] & T_C(M) \, .
\end{tikzcd}
\]
This square is a motivic pullback square.
\end{thm}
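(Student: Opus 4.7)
The plan is to realise the desired square as the motivic pullback produced by Theorem~\ref{thm:GLT} applied to the Milnor context $(C, C, C \oplus M)$, where the $(C,C)$-bimodule $C \oplus M$ is pointed by the canonical inclusion $k \to C \hookrightarrow C \oplus M$. To put the $\odot$-ring in the expected form, I would first exhibit a tensorizer for this Milnor context in the sense of Definition preceding Theorem~\ref{ThmB}. The natural candidate is the tensor algebra $A_0 := T_C(\Omega M)$ equipped on both legs with the augmentation $T_C(\Omega M) \to C$; the compatibility diagram of $(A_0,A_0)$-bimodules required in Definition~\ref{def:map-of-milnor-contexts} is obvious, and the tensorizer condition reduces to checking that the canonical map
\[
C \otimes_{T_C(\Omega M)} C \lto C \oplus M
\]
is an equivalence. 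This is exactly the content of the standard formula $C \otimes_{T_C(N)} C \simeq C \oplus \Sigma N$ (referenced as Lemma~\ref{lem:tensor-product-over-tensor-alg} in the excerpt) applied to $N = \Omega M$.

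With the tensorizer in hand, Theorem~\ref{ThmB} identifies $A' \wtimes{A}{C \oplus M} B$ with the pushout $C \amalg_{T_C(\Omega M)} C$ in $\Alg(k)$. I would compute this pushout directly using the universal property of $T_C$: for any $k$-algebra $D$, a map out of the pushout corresponds to a pair of $k$-algebra maps $\phi_1,\phi_2\colon C \to D$ together with a homotopy between $\phi_i\circ \epsilon$ in $\Map_{\Alg(k)}(T_C(\Omega M),D)$. Under the adjunction $T_C \dashv \mathrm{forget}$, the two restrictions both correspond to the \emph{zero} $(C,C)$-bimodule map $\Omega M \to D$ (since they factor through the augmentation), so the homotopy data is a loop at $0$ in $\Map_{(C,C)}(\Omega M, D)$, i.e.\ a bimodule map $M \simeq \Sigma\Omega M \to D$. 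This recovers the universal property of $T_C(M)$, so the pushout is canonically $T_C(M)$.

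By Theorem~\ref{thm:GLT}, the resulting inner square
\[
\begin{tikzcd}
A \ar[r] \ar[d] & C \ar[d] \\
C \ar[r] & T_C(M)
\end{tikzcd}
\]
is a motivic pullback. By Lemma~\ref{lemma:pullback-bimodule-ring} the underlying $(A,A)$-bimodule of $A$ is the pullback $C \times_{C \oplus M} C$ along two copies of the basepoint inclusion, which splits as $C \oplus \Omega M$ because the inclusion $C \hookrightarrow C \oplus M$ admits the obvious retraction with fibre $\Omega M$. To promote the bimodule identification $A \simeq C \oplus \Omega M$ to an equivalence of $k$-algebras with the trivial square zero extension, I would apply Lemma~\ref{lemma:map-of-milnor-contexts} to the map of Milnor contexts $(C, C, C) \to (C, C, C \oplus M)$ (identities on the algebras, basepoint inclusion on the bimodule), obtaining a $k$-algebra map $C \to A$ which by naturality is a common ring section of both augmentations $A \to C$.

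The main obstacle is then to see that this split augmented $C$-algebra structure on $C \oplus \Omega M$ is the trivial one, i.e.\ that the multiplication is null on $\Omega M \otimes_C \Omega M$. I would establish this by unwinding the definition of $A = \End_{\slax}(\Lambda)$ in the oriented fibre product, where $\Lambda = (C,C,C \xrightarrow{\mathrm{incl}} C \oplus M)$: a pair of endomorphisms whose ``$A'$-component'' and ``$B$-component'' both lie in the augmentation ideal translates, via the pullback formula for mapping spectra of Lemma~\ref{lemma:pullback-bimodule-ring}, into composable data whose lax component automatically lies in the fibre $\Omega M$, and composition of two such endomorphisms is null because the lax component of $\Lambda$ itself factors through the basepoint section of $C \oplus M$. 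Granting this identification, the original square of the theorem is equivalent to the inner square above as a commutative diagram of $k$-algebras, and is therefore a motivic pullback square.
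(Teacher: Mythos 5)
Your overall strategy coincides with the paper's: the Milnor context $(C,C,C\oplus M)$ pointed by the first summand, the tensorizer $T_C(\Omega M)$ via Lemma~\ref{lem:tensor-product-over-tensor-alg}, Theorem~\ref{ThmB} to identify the $\odot$-ring with $C\amalg_{T_C(\Omega M)}C$, and the computation of that pushout as $T_C(M)$ (the paper does this more efficiently by noting that $T_C(-)\colon \BMod(C,C)\to\Alg(k)_{C/}$ is a left adjoint and the forgetful functor to $\Alg(k)$ preserves contractible colimits, so the pushout is $T_C(0\amalg_{\Omega M}0)\simeq T_C(M)$; your by-hand universal-property argument is a sketch of the same computation).

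The genuine gap is in your last step, the identification of $A=C\boxtimes_{C\oplus M}C$ with the trivial square zero extension \emph{as a $k$-algebra}. Lemma~\ref{lemma:pullback-bimodule-ring} together with the section produced by Lemma~\ref{lemma:map-of-milnor-contexts} only gives an augmented $C$-algebra whose underlying bimodule splits as $C\oplus\Omega M$. Your plan to finish by unwinding $\End_{\slax}(\Lambda)$ and showing that the multiplication is null on $\Omega M\otimes_C\Omega M$ is not only unsubstantiated as written, but insufficient in principle: the trivial square zero extension is defined as the value of the right adjoint $\BMod(C,C)\to\Alg(k)_{/C}$ coming from stabilization, and an augmented algebra with a splitting and null binary multiplication on its augmentation ideal need not be equivalent to it --- there can be nontrivial higher ($A_\infty$-type) coherences that your argument does not address. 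The paper's Lemma~\ref{lem:pullback-bimodule} sidesteps this entirely by a small but essential observation: the pointed bimodule $C\oplus M$ is itself canonically a $k$-algebra (the trivial square zero extension), and the basepoint cospan $C\to C\oplus M\leftarrow C$ is obtained by applying the square zero extension functor to $0\to M\leftarrow 0$, hence is a cospan of $k$-algebras. By Remark~\ref{rem:comparison-LT1} the ring $A$ is then the pullback of this cospan in $\Alg(k)$, and since $C\oplus(-)$ is a right adjoint and $\Alg(k)_{/C}\to\Alg(k)$ preserves contractible limits, that pullback is $C\oplus(0\times_M 0)=C\oplus\Omega M$ with its trivial square zero structure. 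You should replace your final paragraph with an argument of this kind.
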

\begin{proof}
We consider the evident Milnor context $(C,C,C\oplus M)$ where $C\oplus M$ is pointed via the inclusion of the first summand, and identify the terms of the motivic pullback square produced by Theorem~\ref{thm:GLT}. By Lemma~\ref{lem:pullback-bimodule} below, the associated $k$-algebra $C \boxtimes_{C\oplus M}C$ is the trivial square zero extension $C \oplus \Omega M$. It remains to identify the $\odot$-ring for this Milnor context. According to Lemma~\ref{lem:tensor-product-over-tensor-alg} below, this Milnor context has a tensorizer, namely $T_{C}(\Omega M)$ with twice the augmentation. Theorem~\ref{ThmB} thus implies that the $\odot$-ring in question is the pushout $C \amalg_{T_{C}(\Omega M)} C$ in $\Alg(k)$. As the functor $T_{C}(-)\colon \BMod(C,C) \to \Alg(k)_{C/}$ is a left adjoint and the forgetful functor $\Alg(k)_{C/} \to \Alg(k)$ preserves contractible colimits, we find that that $C \amalg_{T_{C}(\Omega M)} C$ is canonically equivalent to $T_{C}(0 \amalg_{\Omega M} 0) \simeq T_{C}(M)$. This finishes the proof of the theorem.
\end{proof}

\begin{lemma}
	\label{lem:pullback-bimodule}
Let $C$ be a $k$-algebra and $M$ a $(C,C)$-bimodule. Consider the Minor context $(C,C,C\oplus M)$, where $C \oplus M$ is pointed via the inclusion of the first summand. Then its associated $k$-algebra $C \boxtimes_{C\oplus M}C$ is the trivial square zero extension $C \oplus \Omega M$ of $C$.
\end{lemma}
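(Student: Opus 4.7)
The plan is to combine the bimodule identification from Lemma~\ref{lemma:pullback-bimodule-ring}, the functoriality of the $\boxtimes$-construction in morphisms of Milnor contexts (Lemma~\ref{lemma:map-of-milnor-contexts}), and a direct analysis of the multiplication to recognize $A := C\boxtimes_{C\oplus M}C$ as the trivial square zero extension $C\oplus \Omega M$ of $C$.

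First, I would apply Lemma~\ref{lemma:pullback-bimodule-ring} to the Milnor context $(C,C,C\oplus M)$ to identify the underlying $(A,A)$-bimodule of $A$ with the pullback $C\times_{C\oplus M} C$, in which both legs are the basepoint map $\iota\colon C \hookrightarrow C\oplus M$ (inclusion of the first summand). Since $\iota$ is split by the projection $\pi\colon C\oplus M\to C$, this pullback canonically decomposes as $C\oplus \Omega M$, with $C$ embedded diagonally and $\Omega M \simeq \Omega\cof(\iota)$ providing the complementary summand.

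Second, note that $\pi\circ\iota = \id_C$ and that both $\iota$ and $\pi$ respect basepoints. Thus $(\iota,\pi)$ refines to a pair of morphisms of Milnor contexts $(C,C,C) \rightleftarrows (C,C,C\oplus M)$ whose composite is the identity on $(C,C,C)$. By Lemma~\ref{lemma:map-of-milnor-contexts}, these induce $k$-algebra maps $C\to A\to C$ whose composite is $\id_C$, so that $A$ is canonically an augmented $C$-algebra. Combining with the previous step, the augmentation ideal of $A$ is equivalent to $\Omega M$ as a $(C,C)$-bimodule.

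Finally, to upgrade this to an equivalence with the trivial square zero extension $C\oplus \Omega M$ as augmented $C$-algebras, it suffices to show that the multiplication on $A$ vanishes on the augmentation ideal, i.e., that the induced map $\Omega M\otimes_C \Omega M \to A\otimes_C A \to A$ is null. I would verify this by unwinding composition of endomorphisms of $\Lambda = (C,C,\iota)$ in the oriented fibre product: elements of $\Omega M\subset A$ correspond to triples $(0,0,\alpha)$ with $\alpha$ a self-homotopy of the zero morphism in $\map_C(C,C\oplus M)$ having trivial $C$-component, and the composite of two such triples has homotopy component obtained by pasting two null paths arising from pre- and post-composition of the $\alpha$'s with the zero maps $0\colon C\to C$ and $0\otimes\id\colon C\oplus M\to C\oplus M$. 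The main obstacle is promoting this pointwise cancellation to a genuine vanishing at the level of spectra; I expect to handle this either via naturality of the $\boxtimes$-construction in the bimodule $M$ (reducing formally to the case $M=0$, where the identification $A\simeq C$ is tautological) or by a direct manipulation of the mapping spectra exhibiting the multiplication as the composite of maps that are null for structural reasons.
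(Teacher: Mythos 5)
Your first two steps are fine and agree with ingredients the paper uses elsewhere: Lemma~\ref{lemma:pullback-bimodule-ring} identifies the underlying $(A,A)$-bimodule of $A = C\boxtimes_{C\oplus M}C$ with $C\times_{C\oplus M}C \simeq C\oplus\Omega M$, and the retraction of Milnor contexts does make $A$ an augmented $C$-algebra with augmentation ideal $\Omega M$. The gap is in your final step, and it is twofold. First, even if you produced a nullhomotopy of $\Omega M\otimes_C\Omega M \to A$, that would not identify $A$ as the trivial square zero extension: in the $\infty$-categorical setting, being a trivial square zero extension is \emph{structure}, not a property. The ring $C\oplus\Omega M$ is by definition the value of the right adjoint $\BMod(C,C)\to\Alg(k)_{/C}$, so an equivalence $A\simeq C\oplus\Omega M$ of augmented $k$-algebras requires constructing a genuine $\E_1$-map (equivalently, by adjunction, a map out of the appropriate left adjoint applied to $A$), which encodes infinitely many coherences beyond a single nullhomotopy of the restricted multiplication. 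Second, neither of your proposed strategies closes even the first-order part of this: naturality in $M$ only compares $A_M$ with $A_0=C$ through a map that kills the ideal, so it says nothing about the multiplication \emph{within} the ideal; and the ``pasting of null paths'' in the oriented fibre product is exactly the coherence problem you are trying to solve, restated.

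The paper sidesteps all of this. The key observation is that $C\oplus M$ is itself canonically a $k$-algebra (the trivial square zero extension on $M$), and the basepoint cospan $C\to C\oplus M\leftarrow C$ is a cospan of $k$-algebras, being the image of $0\to M\leftarrow 0$ under the functor $C\oplus(-)$. Hence Remark~\ref{rem:comparison-LT1} applies: for a Milnor context arising from a cospan of rings, $C\boxtimes_{C\oplus M}C$ is simply the pullback of that cospan in $\Alg(k)$. Since $C\oplus(-)\colon\BMod(C,C)\to\Alg(k)_{/C}$ is a right adjoint and the forgetful functor to $\Alg(k)$ preserves contractible limits, this pullback is $C\oplus(0\times_M 0) = C\oplus\Omega M$, with its full ring structure for free. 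If you want to salvage your approach, you would need to replace your step 3 by an argument of this adjunction-theoretic kind rather than a hands-on analysis of the multiplication.
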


\begin{proof}
We view the $(C,C)$-bimodule $C \oplus M$ as a $k$-algebra via the trivial square zero extension. Then the cospan $C \to C \oplus M \leftarrow C$ induced by the base point is canonically a cospan of $k$-algebras: It arises by applying the trivial square zero extension functor to the cospan $0\to M \leftarrow 0$. According to Remark \ref{rem:comparison-LT1}, the $k$-algebra $C \boxtimes_{C\oplus M}C$  is then the pullback of the cospan $C \to C \oplus M \leftarrow C$ in $\Alg(k)$. As the trivial square zero extension functor $C \oplus (-) \colon\BMod(C,C) \to \Alg(k)_{/C}$ is a right adjoint and the forgetful functor $\Alg(k)_{/C} \to \Alg(k)$ preserves contractible limits, 
we find that the above pullback is given by the trivial square zero extension $C \oplus \Omega M$.
\end{proof}

\begin{lemma}
	\label{lem:tensor-product-over-tensor-alg}
Let $C$ be a $k$-algebra and $M$ a $(C,C)$-bimodule. Let $T_{C}(\Omega M) \to C$ be the canonical map of $k$-algebras induced by $\Omega M \to 0$. Then 
the inclusion $C \to C \oplus M$ induces an equivalence of $(C,C)$-bimodules
\[
C \otimes_{T_{C}(\Omega M)} C \simeq C \oplus M.
\]
\end{lemma}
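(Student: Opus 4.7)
The plan is to construct the desired equivalence by exhibiting a canonical cofibre sequence of $(C,C)$-bimodules and checking that the connecting map is null-homotopic.

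First I would set $T := T_{C}(\Omega M)$ and produce the standard augmentation cofibre sequence of right $T$-modules (equivalently, of $(C,T)$-bimodules)
\[
\Omega M \otimes_{C} T \lto T \lto C,
\]
where the first map is multiplication and the second map is the augmentation. The existence of this sequence is built into the tensor algebra: the augmentation ideal $\fib(T \to C)$ is, as a right $T$-module, naturally identified with $\bigoplus_{n\geq 1}(\Omega M)^{\otimes_{C} n} \simeq \Omega M \otimes_{C} T$. One can spell this out abstractly using that $T \mapsto (\text{underlying } (C,C)\text{-bimodule})$ is monadic over $\BMod(C,C)$ with monad $X \mapsto \bigoplus_{n\geq 0} X^{\otimes_{C} n}$, so the augmentation fits into the standard monadic bar-style resolution.

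Next I would tensor this cofibre sequence on the right with $C$ over $T$, where $C$ is viewed as a $(T,C)$-bimodule via the augmentation. Using $(\Omega M \otimes_{C} T) \otimes_{T} C \simeq \Omega M$ and $T \otimes_{T} C \simeq C$, this yields a cofibre sequence of $(C,C)$-bimodules
\[
\Omega M \lto C \lto C \otimes_{T} C.
\]
The connecting map $\Omega M \to C$ is induced by the multiplication $\Omega M \otimes_{C} T \to T$ followed by the augmentation, hence factors through $\Omega M \hookrightarrow T \overset{\epsilon}{\to} C$, which is null because the augmentation kills $\Omega M$ by construction. Consequently the cofibre sequence canonically splits, yielding
\[
C \otimes_{T} C \;\simeq\; C \oplus \Sigma \Omega M \;\simeq\; C \oplus M
\]
as $(C,C)$-bimodules.

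Finally, I would verify compatibility with the inclusion $C \to C \oplus M$: under the splitting, the unit map $C \to C \otimes_{T} C$ is precisely the map onto the first summand. The only mildly delicate point is making sure the cofibre sequence is genuinely one of $(C,T)$-bimodules (not merely of right $T$-modules) and that the splitting of $C \otimes_{T} C$ is $(C,C)$-bilinear; both follow from the natural grading on $T$ and the functoriality of the construction, so I do not expect a real obstacle here.
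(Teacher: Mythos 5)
Your proof is correct and follows essentially the same route as the paper: both start from the augmentation-ideal fibre sequence $\Omega M \otimes_C T \to T \to C$ and apply $-\otimes_T C$ to obtain $\Omega M \to C \to C\otimes_T C$. The only (immaterial) difference is how the splitting is produced: you show the first map $\Omega M \to C$ is null via the augmentation, whereas the paper observes that the second map $C \to C\otimes_T C$ is split by the multiplication.
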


\begin{proof}
We have the following fibre sequence of $(C,T_C(M))$-bimodules. 
\[ 
\Omega M \otimes_C T_C(\Omega M) \lto T_C(\Omega M) \lto C
\]
Applying $(-) \otimes_{T_C(\Omega M)} C$, we obtain a fibre sequence of $(C,C)$-bimodules 
\[ 
\Omega M \lto C \lto C \otimes_{T_C(\Omega M)} C
\]
and observe that the second map admits a $(C,C)$-bimodule retraction, namely the multiplication map. Consequently, we obtain the equivalence of $(C,C)$-bimodules as claimed.
\end{proof}

\subsubsection*{$E$-theory of endomorphisms}
For a $k$-localizing invariant $E$, we discuss the $E$-theory of (twisted, nilpotent) endomorphisms, and its relation with trivial square zero extensions and hence with the $E$-theory of tensor algebras.

\begin{dfn}
Let $C$ be a $k$-algebra, and let $M$ be a $(C,C)$-bimodule. Let $E$ be a $k$-localizing invariant. We define
\[
\Nil E(C; M) := \cof( E(C) \to E(C\oplus {\Omega}M)), \qquad NE(C;M) := \cof(E(C) \to E(T_{C}(M)).
\]
\end{dfn}
As $C$ is a canonical retract of the trivial square zero extension $C \oplus M$ and of the tensor algebra $T_{C}(M)$, we obtain canonical decompositions
\[
E(C\oplus {\Omega}M) \simeq E(C) \oplus \Nil E(C;M), \qquad E(T_{C}(M)) \simeq E(C) \oplus NE(C;M).
\]
In this notation, $NK(C;C)$ is the usual $NK$-spectrum of $C$ which measures the failure of $\mathbb{A}^1$-homotopy invariance of $K$-theory. The notation $\Nil E$ refers to its relation to the $E$-theory of nilpotent endomorphisms, as we explain below. This will also clarify the appearance of the shift $\Omega$ in the definition. But first, we record an immediate consequence of Theorem~\ref{thm:free-vs-sz} in this notation.

\begin{cor}\label{cor:Nil-vs-N}
Let $C$ be a $k$-algebra and $M$ a $(C,C)$-bimodule. Then there is a canonical equivalence $\Sigma\Nil E(C; M) \simeq NE(C;M)$.
\end{cor}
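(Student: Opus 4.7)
The plan is to apply $E$ to the motivic pullback square provided by Theorem~\ref{thm:free-vs-sz} and to read off the desired equivalence from the resulting pullback of spectra. More precisely, applying $E$ to the square
\[
\begin{tikzcd}
	C \oplus \Omega M \ar[r] \ar[d] & C \ar[d] \\
	C \ar[r] & T_C(M)
\end{tikzcd}
\]
yields a pullback square in which both maps $C\oplus\Omega M \to C$ are the augmentations of the trivial square zero extension and both maps $C \to T_C(M)$ are the canonical ring inclusions; this is built into the Milnor context $(C,C,C\oplus M)$ used to produce the square.

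Next I would observe that in the resulting pullback of spectra the left vertical map $E(C\oplus\Omega M)\to E(C)$ admits a section coming from $E$ applied to the structure map $C \to C\oplus\Omega M$, while the right vertical map $E(C)\to E(T_C(M))$ admits a retraction coming from $E$ applied to the augmentation $T_C(M)\to C$. Using these splittings, the left vertical fibre identifies with $\Nil E(C;M)$, since for a split map in a stable $\infty$-category the cofibre of the splitting and the fibre of the projection agree, and $\Nil E(C;M)$ is by definition $\cof(E(C)\to E(C\oplus\Omega M))$. The right vertical fibre, on the other hand, identifies with $\Omega NE(C;M)$, simply because in a stable $\infty$-category one has $\fib(q)\simeq \Omega\cof(q)$ for any map $q$, applied to $E(C)\to E(T_C(M))$.

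Finally, the pullback property forces the two vertical fibres to be canonically equivalent, yielding $\Nil E(C;M)\simeq \Omega NE(C;M)$, which is the desired equivalence after suspension. There is no substantive obstacle here beyond Theorem~\ref{thm:free-vs-sz} itself; the only point that requires a modicum of care is verifying that the underlying $k$-algebra maps in the motivic pullback square are genuinely the augmentation/inclusion pair, so that the vertical maps really split after applying $E$. Once that is checked, the corollary is a purely formal consequence of the existence of the motivic pullback square.
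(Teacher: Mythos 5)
Your argument is correct and is essentially the paper's: the corollary is recorded there as an immediate consequence of Theorem~\ref{thm:free-vs-sz}, obtained exactly by applying $E$ to the motivic pullback square and comparing the fibres of the two split vertical maps, which give $\Nil E(C;M)$ on the left and $\Omega NE(C;M)$ on the right. Your check that the vertical maps are the augmentation/inclusion pair (hence split after applying $E$) is indeed the only point requiring care, and it follows from the identifications made in the proof of Theorem~\ref{thm:free-vs-sz} via Lemmas~\ref{lem:pullback-bimodule} and \ref{lem:tensor-product-over-tensor-alg}.
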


\begin{dfn}
Let $C$ be a $k$-algebra and $M$ a $(C,C)$-bimodule. We define the presentable $k$-linear $\infty$-category of $M$-twisted endomorphisms of $C$-modules $\underline\End(\RMod(C); M)$ as the pullback of the diagram
\[
\begin{tikzcd}[column sep=2cm]
\RMod(C) \ar[r, "{(\id, (-)\otimes_{C}M)}"] & \RMod(C) \times \RMod(C) & \RMod(C)^{\Delta^{1}}. \ar[l, "{(s, t)} "']
\end{tikzcd}
\]
Thus its objects are pairs $(X, f)$ consisting of a $C$-module $X$ and a $C$-linear morphism $f\colon X \to X \otimes_{C} M$.
Likewise, we let $\underline{\End}(\Perf(C);M)$, or simply $\underline{\End}(C;M)$, be the full subcategory of $\underline{\End}(\RMod(C);M)$ consisting of those pairs $(X,f)$ for which $X$ is perfect. In particular, $\underline{\End}(C;M)$ is a full subcategory of $\underline{\End}(\RMod(C);M)$ consisting of compact objects.

We then define the subcategory of nilpotent $M$-twisted endomorphisms $\underline\Nil (\Perf(C); M)$, or simply $\underline{\Nil}(C;M)$, as the thick subcategory of $\underline\End(C;M)$ generated by the object $(C, C \xrightarrow{0} M)$. 
\end{dfn}

\begin{prop}\label{prop:twisted-nilpotent}
Let $C$ be a $k$-algebra and $M$ a $(C,C)$-bimodule. There is a canonical equivalence
$
\underline\Nil(C; M) \simeq \Perf(C \oplus \Omega M).
$
In particular, for any $k$-localizing invariant $E$, there is a canonical equivalence
\[
E(\underline\Nil(C;M)) \simeq E(C) \oplus \Nil E(C; M).
\]
\end{prop}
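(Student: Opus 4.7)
The plan is to apply the Schwede--Shipley theorem. First, $\underline\End(\RMod(C);M)$ is a presentable stable $k$-linear $\infty$-category, being a pullback along colimit preserving functors: both $(-)\otimes_C M$ and the source-target functor preserve colimits. The object $(C,0)$ is compact in $\underline\End(\RMod(C);M)$, since $C$ is compact in $\RMod(C)$ and $(-)\otimes_C M$ preserves filtered colimits. Schwede--Shipley then identifies the thick subcategory generated by $(C,0)$ with $\Perf(\End_{\underline\End}((C,0)))$, and this thick subcategory is $\underline\Nil(C;M)$ by definition. It thus suffices to show that the endomorphism $k$-algebra $\End_{\underline\End}((C,0))$ is canonically equivalent to the trivial square zero extension $C \oplus \Omega M$.

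The underlying spectrum is computed using the pullback formula for mapping spectra in the defining pullback. A morphism $(C,0) \to (C,0)$ consists of a $C$-linear map $h\colon C \to C$ together with a homotopy between the two null composites $0 \circ h$ and $(h\otimes_C M)\circ 0$ in $\Map_C(C, C\otimes_C M)$; such a homotopy is a point of $\Omega \Map_C(C,C\otimes_C M) \simeq \Omega M$. Hence $\End_{\underline\End}((C,0)) \simeq C \oplus \Omega M$ as a $k$-module spectrum.

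The main obstacle is to identify the $k$-algebra structure on this endomorphism spectrum with that of the trivial square zero extension. The forgetful functor to $\RMod(C)$ equips $\End_{\underline\End}((C,0))$ with an augmentation to $\End_C(C) \simeq C$, whose fibre is canonically $\Omega M$ with its natural $(C,C)$-bimodule structure. To pin down the multiplication, one composes two endomorphisms $(g_1,\eta_1)$ and $(g_2,\eta_2)$, with $g_i \in C$ and $\eta_i \in \Omega M$: stacking the null-homotopies of $0\circ g_i \simeq (g_i\otimes M)\circ 0$ yields the composite $(g_1 g_2,\; \eta_1 \cdot g_2 + g_1 \cdot \eta_2)$, where the first summand is $\eta_1$ whiskered on the right by $g_2$ and the second is $\eta_2$ whiskered on the left by $g_1 \otimes M$. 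These whiskering operations literally realize the right and left $C$-actions on $\Omega M$, so the composition matches the multiplication in $C \oplus \Omega M$.

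With the identification of endomorphism algebras in hand, Schwede--Shipley yields the equivalence $\underline\Nil(C;M) \simeq \Perf(C \oplus \Omega M)$. The ``in particular'' statement then follows by applying $E$: the canonical retraction $C \to C \oplus \Omega M \to C$ produces the splitting $E(C \oplus \Omega M) \simeq E(C) \oplus \Nil E(C;M)$ directly from the definition of $\Nil E(C;M)$.
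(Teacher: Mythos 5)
Your skeleton (Schwede--Shipley plus a computation of the endomorphism algebra of $(C,0)$) is the same as the paper's, and your computation of the underlying mapping \emph{spectrum} $\End((C,0)) \simeq C \oplus \Omega M$ via the pullback formula is correct. The gap is exactly at the point you yourself flag as ``the main obstacle'': identifying the $k$-algebra ($\E_1$-) structure. Writing composition of two endomorphisms $(g_1,\eta_1)$, $(g_2,\eta_2)$ as $(g_1g_2,\ \eta_1\cdot g_2 + g_1\cdot\eta_2)$ by ``stacking null-homotopies'' only determines the multiplication on homotopy classes (essentially the ring structure on $\pi_*$, or the monoid structure in the homotopy category). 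An equivalence of $k$-algebras requires all higher coherences of the $\E_1$-structure, and a spectrum $R$ with a split augmentation $R \to C$ whose fibre is $\Omega M$ with square-zero multiplication on homotopy need not be the \emph{trivial} square zero extension --- there is a whole space of such extensions, and your argument does not produce a point-set comparison map of algebras, let alone show it is an equivalence. So the step ``these whiskering operations literally realize the right and left $C$-actions'' does not by itself finish the proof.

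The paper closes this gap by a formal dévissage that never touches explicit composition formulas: it identifies $\underline\End(C;M)$ with the full subcategory of the oriented fibre product $\Perf(C)\laxtimes{C\oplus M}\Perf(C)$ (for the Milnor context $(C,C,C\oplus M)$) on objects $(X,Y,X\to Y\oplus(Y\otimes_C M))$ whose first component is an equivalence; under this identification $(C,0)$ becomes the distinguished object $\Lambda$ of Construction~\ref{dfn:lambda-and-A}, whose endomorphism \emph{$k$-algebra} is, by Remark~\ref{rem:comparison-LT1}, the pullback $C\times_{C\oplus M}C$ taken in $\Alg(k)$ (here one uses that $C\oplus M$ is itself a $k$-algebra, namely a trivial square zero extension, so the cospan is one of $k$-algebras). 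Lemma~\ref{lem:pullback-bimodule} then identifies this pullback with $C\oplus\Omega M$ as a $k$-algebra, using that the trivial square zero extension functor $\BMod(C,C)\to\Alg(k)_{/C}$ is a right adjoint and hence sends the pullback of $0\to M\leftarrow 0$ (which is $\Omega M$) to the pullback of $C\to C\oplus M\leftarrow C$. If you want to keep your direct approach, you would need to replace the whiskering argument by some such functorial identification; as written, the multiplicative identification is asserted rather than proven.
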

\begin{proof}
By definition, $\underline\Nil(C;M)$ is generated by the object $(C,0)=(C,C\xrightarrow{0}M)$. Thus $\underline\Nil(C;M)$ identifies  with the $\infty$-category of perfect $\End((C,0))$-modules. To compute $\End((C,0))$, we observe that $\underline\End(C;M)$ is equivalent to the full subcategory of the oriented fibre product
\[ 
\Perf(C) \laxtimes{C \oplus M} \Perf(C) 
\]
on those objects $(X,Y, X \stackrel{(f,g)}{\to} Y\oplus (Y\otimes_C M))$ where the map $f\colon X \to Y$ is an equivalence. 
Under this equivalence, $(C,0)$ corresponds to the object $\Lambda = (C, C, C \xrightarrow{(\id, 0)} C \oplus M)$ of Construction~\ref{dfn:lambda-and-A}. Its endomorphism ring is  the pullback $C \times_{C\oplus M} C$, which by Lemma~\ref{lem:pullback-bimodule} is given by the trivial square zero extension $C \oplus \Omega M$, as desired.
\end{proof}

\begin{ex}
In the above setting, assume that $C$ is a connective $k$-algebra, and that the bimodule $M$ is connected. Then we claim that  $\underline\Nil(C;M)=\underline\End(C;M)$. 
To see this, it suffices to check that $(C,0)$ generates $\underline\End(C;M)$ as an idempotent complete stable $\infty$-category, or equivalently, that the functor on the Ind-completion $\Ind(\underline\End(C;M))$ corepresented by $(C,0)$ is conservative. As $\underline\End(C;M)$ consists of compact objects in $\underline\End(\RMod(C); M)$, this Ind-completion canonically identifies with a full subcategory of $\underline\End(\RMod(C); M)$.
The functor corepresented by $(C,0)$ is the functor $\fib$ which sends an object $(X, f\colon X \to X \otimes_{C} M)$ to $\fib(f)$. 
As $\Ind(\underline\End(C;M))$ is stable, it suffices to check that $\fib$ detects zero objects.  So consider an object $(X,f)$ of $\Ind(\underline\End(C;M))$ and assume that $\fib(f) \simeq 0$, i.e.~that $f$ is an equivalence. Fix an integer $n$, and let $x$ be an arbitrary element of $\pi_{n}(X)$. We have to show that $x=0$.
Write $(X,f)$ as the colimit of diagram $(X_{i}, f_{i})_{i\in I}$ with a small filtered $\infty$-category $I$.
Then there exists an $i \in I$ such that $x$ has a preimage $x_{i}$ in $\pi_{n}(X_{i})$. As $C$ is connective and $X_{i}$ is a perfect $C$-module, $X_{i}$ is bounded below. As $M$ is connected, it follows that there exists a positive integer $k$ such that $X_{i} \otimes_{C} M^{\otimes k}$ is $n$-connected. Consider the following commutative diagram.
\[
\begin{tikzcd}[column sep=1.6cm]
X_{i} \ar[d] \ar[r, "f_{i}"] & X_{i}\otimes_{C} M \ar[d] \ar[r, "f_{i} \otimes \id_{M}"] & \cdots \ar[d]\ar[r] & X_{i} \otimes_{C} M^{\otimes k} \ar[d]  \\
X \ar[r, "f"] & X\otimes_{C} M\ar[r, "f\otimes \id_{M}"] & \cdots \ar[r] & X \otimes_{C} M^{\otimes k}
\end{tikzcd}
\]
By choice of $k$, the top horizontal composite sends $x_{i}$ to $0$, and hence the lower horizontal composite sends $x$ to $0$. As by assumption all the lower horizontal maps are equivalences, it follows that $x=0$, as desired.

As a consequence, in this setting we have an equivalence
\[
\End E( C;M ) \simeq \Omega NE(C;M)
\]
where $\End E$ is defined similarly as $\Nil E$ before. 
When $E$ is $K$-theory, and $C$ is discrete, this is closely related to a result of Lindenstrauss and McCarthy \cite[Cor.~3.3]{LM2}, where they prove the analog result for $K$-theory, but work with exact categories of finitely generated projective modules and $M$-twisted endomorphisms, see Remark~\ref{rem:exact-vs-stable-concrete-case} for a comparison. Their approach is based on Goodwillie calculus, viewing both sides of the above display as functors in the simplicial bimodule $M$. The fact that $M$ is connected is used in \cite{LM2} to ensure that the respective Goodwillie towers converge.
 \end{ex}

\begin{rem}\label{rem:exact-vs-stable-concrete-case}
Classically, for a discrete ring $C$ and a discrete bimodule $M$, the following is considered in the literature, e.g.\ in \cite{LM2, DKR, Waldhausen2}. Namely the category $\underline{\End}(\Proj(C);M)$ consisting of pairs $(P,f \colon P \to P \otimes_C M)$ where $P$ is a finitely generated projective right $C$-module. This is an exact category in the sense of Quillen with admissible monomorphisms the maps which are direct summand inclusions on the underlying projective modules and admissible epimorphisms the maps which are surjections on the underlying module. The evident functor $\underline{\End}(\Proj(C);M) \to \underline{\End}(\Perf(C);M)$ is fully faithful and exact and its image generates the target as a stable $\infty$-category. This functor induces an equivalence on $K$-theory as can be extracted from the literature as follows. 

First, let $\mathscr{E}$ be an exact $\infty$-category in the sense of Barwick \cite{Barwick}, for instance an ordinary exact category in the sense of Quillen, see \cite[Example 3.3.1]{Barwick}, and denote by $\Cat_\infty^\mathrm{ex}$ the $\infty$-category of exact $\infty$-categories with exact functors. Any stable $\infty$-category is canonically an exact $\infty$-category, see \cite[Example 3.3.2]{Barwick}, and there is a fully faithful inclusion $\Cat_\infty^\mathrm{st} \subseteq \Cat_\infty^\mathrm{ex}$. 
This inclusion admits a left adjoint $\mathscr{E} \mapsto \mathrm{Stab}(\mathscr{E})$, see \cite{Klemenc}, where $\mathrm{Stab}(\mathscr{E})$ is the small stable subcategory of $\Fun^{\mathrm{ex}}(\mathscr{E}^\op,\Spc) \otimes \Sp$ generated by the Yoneda image. Here, the superscript ex refers to functors which send exact sequences in $\mathscr{E}$ to fibre sequences and the tensor product is the Lurie tensor product on $\Pr^\L$ \cite[\S 4.8.1]{HA}.
The unit of the above adjunction is a fully faithful and exact inclusion $\mathscr{E} \subseteq \mathrm{Stab}(\mathscr{E})$ induced by the Yoneda embedding of $\mathscr{E}$.

For exact categories $\mathscr{E}$ in the sense of Quillen, it follows from \cite[Corollary 7.59]{BCKW} that $\mathrm{Stab}(\mathscr{E})$ is equivalent to the $\infty$-category associated to the Waldhausen category $\mathrm{Ch}^\mathrm{b}(\mathscr{E})$ of bounded chain complexes in $\mathscr{E}$ with quasi-isomorphisms as weak equivalences. Combining the theorem of Gillet--Waldhausen \cite[Theorem 1.11.7]{ThomasonTrobaugh} and \cite[\S 8]{Barwick} or \cite[\S 7.2]{BGT} one finds that the map $K(\mathscr{E}) \to K(\mathrm{Stab}(\mathscr{E}))$ is an equivalence for any exact category in the sense of Quillen. It then suffices to observe that the fully faithful exact functor $\underline{\End}(\Proj(C);M) \to \underline{\End}(\Perf(C);M)$ mentioned above induces an exact functor 
\[ \mathrm{Stab}(\underline{\End}(\Proj(C);M)) \lto \underline{\End}(\Perf(C);M) \]
which is  fully faithful and essentially surjective (because its restriction to $\underline{\End}(\Proj(C);M)$ is fully faithful and the image of $\underline{\End}(\Proj(C);M)$ in both $\mathrm{Stab}(\underline{\End}(\Proj(C);M))$ and $\underline{\End}(\Perf(C);M)$ generates the respective stable $\infty$-categories).
\end{rem}

\begin{ex}\label{cor:dual-numbers}
We now specialize Theorem~\ref{thm:free-vs-sz} to obtain results about the ring of dual numbers $C[\epsilon] = C[x]/(x^2)$ over a discrete $k$-algebra $C$ (which is the trivial square-zero extension on the bimodule $C$). In this case, let us denote the tensor algebra $T_C(\Sigma C)$ by $C[t]$ with $|t|=1$. Then for any $k$-localizing invariant $E$, we obtain an equivalence
\[ E(C[\epsilon],C) \simeq \Omega E(C[t],C),\]
where the terms $E(C[\epsilon],C)$ and $E(C[t],C)$ denote the fibres of the the maps induced by the respective augmentations. In particular, we obtain that $K(C[\epsilon],C)$ is equivalent to $\Omega K(C[t],C)$.
\end{ex}

\subsubsection*{$K$-theory of homotopical polynomial algebras}
We view the ring spectra $C[t]$, for a discrete ring $C$ and arbitrary $|t|$, as fundamental objects and would like to evaluate their $K$-theory in general. We remark here already, that the case $|t|=2$ is related to the $K$-theory of the 2-dimensional coordinate axes over $C$, by Corollary~\ref{cor:coordinate-axes} below. It turns out that the general behaviour of the $K$-theories $K(C[t],C)$ with arbitrary $|t|$ obeys a certain dichotomy, namely the case where $|t|>0$ and the case where $|t|\leq 0$.
For instance, for $|t|>0$ we find that $K(C[t],C)$ is non-trivial in many cases\footnote{It follows for instance from \cite{Waldhausen} that $K_{|t|+1}(C[t],C) \cong \HH_0(C) \cong C/\langle ab-ba \;|\; a,b \in C \rangle$ which is often (but not always) non-trivial.}. In particular this is so when $C$ is regular Noetherian, in which case classical $\A^1$-invariance of $K$-theory gives $K(C[t],C) = 0$ for $|t|=0$. For $|t|<0$ the same result is true, for instance by \cite{BL} or by combining Theorem~\ref{thm:free-vs-sz} with \cite[Theorem 4.8]{AGH} applied to the cochain algebra $C^{*}(S^{1-|t|};C)$ and using that this algebra is, as an $\E_{1}$-algebra equivalent to the trivial square zero extension.
In the following, we will discuss some aspects of what is known about $K(C[t],C)$ for $|t|>0$ and begin with the following rational calculation.

\begin{prop}\label{eq:K-theory-of-homotopical-polynomial-algebras}
Let $C$ be a discrete ring and $\ell=|t|>0$, and denote by $C_\Q$ the rationalization of $C$. Then there is a canonical equivalence
\[ K(C[t],C)_\Q \simeq \begin{cases} \bigoplus\limits_{n \geq 1} \Sigma^{\ell(2n-1)+1}\HH(C_\Q)& \text{ if }  \ell=|t| \text{ is odd }\\ 
					\bigoplus\limits_{n\geq 1} \Sigma^{\ell n+1}\HH(C_\Q)& \text{ if } \ell= |t| \text{ is even.} \end{cases} \]
\end{prop}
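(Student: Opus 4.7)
The plan is as follows. First, I would invoke Goodwillie's rational theorem: for a map of connective ring spectra $R \to S$ inducing an isomorphism on $\pi_{0}$, the relative cyclotomic trace realizes an equivalence
\[
K(R, S)_\Q \simeq \HC(R, S)_\Q[1].
\]
Applied to the augmentation $C[t] \to C$, whose fibre is the connective ideal $\bigoplus_{n \geq 1} \Sigma^{\ell n} C$ (which is zero in degree $0$ since $\ell > 0$), this reduces the problem to computing $\HC(C[t], C)_\Q$.

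Second, I would use the cyclic bar description of the Hochschild homology of a tensor algebra. For any $(C,C)$-bimodule $M$, there is a natural $S^{1}$-equivariant splitting
\[
\HH(T_{C}(M)) \simeq \HH(C) \oplus \bigoplus_{n \geq 1} \mathrm{Ind}_{C_{n}}^{S^{1}}\bigl(\HH(C; M^{\otimes_{C} n})\bigr),
\]
the weight decomposition coming from counting the number of $M$-tensor factors, with $C_{n} \subset S^{1}$ acting on each weight-$n$ summand by cyclic rotation. Taking the fibre over the augmentation and then $S^{1}$-orbits, and using the induction--orbits identity $(\mathrm{Ind}_{C_{n}}^{S^{1}} X)_{hS^{1}} \simeq X_{hC_{n}}$, I would obtain
\[
\HC(C[t], C)_\Q \simeq \bigoplus_{n \geq 1} \bigl(\Sigma^{\ell n} \HH(C_\Q)\bigr)_{hC_{n}},
\]
where the generator of $C_{n}$ acts via the Koszul sign $(-1)^{\ell(n-1)}$ coming from cyclically permuting $n$ elements of degree $\ell$.

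Third, I would analyze these $C_{n}$-orbits rationally using that $BC_{n}$ is $\Q$-acyclic, so that for a $\Q$-linear $C_{n}$-spectrum $X$ one has $X_{hC_{n}} \simeq X_{C_{n}}$: the trivial representation contributes $X$ while the sign representation (only well-defined for $n$ even) contributes $0$, since $1-(-1)=2$ is invertible rationally. The sign $(-1)^{\ell(n-1)}$ is trivial exactly when $\ell$ is even, or when $\ell$ is odd and $n$ is odd. Hence for $\ell$ even every summand survives and contributes $\Sigma^{\ell n} \HH(C_\Q)$, while for $\ell$ odd only the $n = 2m-1$ summands survive and contribute $\Sigma^{\ell(2m-1)} \HH(C_\Q)$. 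Applying the degree shift from the first step then yields precisely the formula in the statement.

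The main obstacle I expect is pinning down the $S^{1}$-equivariant structure of the weight decomposition of $\HH(T_{C}(M))$, and in particular justifying that the $S^{1}$-action on the weight-$n$ summand is the one induced from the cyclic $C_{n}$-action so that its rational $S^{1}$-orbits collapse to rational $C_{n}$-orbits, rather than contributing the $\Q$-cohomology of $BS^{1}$. A secondary point is the applicability of Goodwillie's theorem in the form $K_\Q \simeq \HC_\Q[1]$ to a non-discrete ring spectrum whose augmentation ideal is merely connective rather than discretely nilpotent; this should be addressed either by appealing to the Dundas--McCarthy--Goodwillie extension of the result to connective ring spectra, combined with the rational vanishing of relative periodic cyclic homology, or by a Postnikov argument reducing to the simplicial-ring setting.
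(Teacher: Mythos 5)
Your proposal is correct and follows the same overall strategy as the paper's proof: Goodwillie's theorem reduces the claim to relative cyclic homology, one identifies a $\T$-equivariant weight decomposition of $\HH(C_\Q[t])$ whose weight-$n$ piece is induced up from a $C_n$-spectrum, and the rational homotopy orbits are then governed by the sign $(-1)^{\ell(n-1)}$. The one place you diverge is the provenance of the splitting: the paper writes $C[t]\simeq C\otimes_\SS \SS[\Omega S^{\ell+1}]$, uses symmetric monoidality of $\THH$, the identification of $\THH$ of spherical group rings with $\SS[L S^{\ell+1}]$, and Goodwillie's stable splitting of the free loop space into the pieces $S^1_+\wedge_{C_n}S^{\ell n}$, whereas you invoke the cyclic-bar weight decomposition of $\HH(T_C(M))$ directly --- which is exactly the statement (Hesselholt's decomposition of reduced Hochschild homology of tensor algebras) that the paper itself uses later in Example~\ref{Ex:Burghelea}, so your route is legitimate and arguably more direct here. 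One detail to make explicit in your third step: the generator of $C_n$ acts on the weight-$n$ summand not only by the Koszul sign on the $\ell n$ suspension coordinates but also through the restricted $\T$-action on the $\HH(C_\Q)$ factor; rationally this costs nothing because that action extends to an action of the connected group $\T$ and is therefore trivial on homotopy groups, which is precisely what the paper records before reading off the orbits.
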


If $C$ is a $\Q$-algebra, then $K(C[t],C)$ is rational, e.g.\ by \cite[Lemma 2.4]{LT}. If $C$ is in addition commutative regular Noetherian  (and hence ind-smooth over $\Q$ by \cite[Theorem 1.8]{Popescu}), one can evaluate $\HH(C)$ by means of the Hochschild--Kostant--Rosenberg theorem \cite[Theorem~3.4.4]{Loday} which in this case provides an isomorphism
\[ 
\Omega^{*}_{C/\Q} \stackrel{\cong}{\lto} \HH_*(C)
\]
so that relative algebraic $K$-theory $K(C[t],C)$ is described purely in terms of differential forms on $C$. As a direct consequence, one can for instance recover the ranks of the $K$-groups of dual numbers over rings of integers in number fields as initially calculated by Soul\'e \cite{Soule} who used group homological arguments.

\begin{proof}[Proof of Proposition~\ref{eq:K-theory-of-homotopical-polynomial-algebras}]
Goodwillie's theorem \cite{Goodwillie} provides a canonical equivalence 
\[ K(C[t],C)_\Q \simeq \Sigma \HC(C_\Q[t],C_\Q)\]
where $\HC$ denotes cyclic homology relative to $\Q$. It therefore suffices to calculate the right hand side, so we may assume that $C$ is rational. This relative cyclic homology is known to be as claimed, a very similar result appears for instance in \cite[Prop.~5.3.10]{Loday}. Since the argument is quick and easy, we provide a calculation nevertheless.
We recall that $\HC(-) = \HH(-)_{h\T}$, the orbits of the canonical circle action on Hochschild homology. We note that since $C$ is rational, we have a canonical equivalence $\THH(C) \simeq \HH(C)$ where the latter denotes Hochschild homology relative to $\Q$ and the former denotes (topological) Hochschild homology relative to $\SS$.
We have 
\begin{align*}
	\HH(C[t]) & \simeq \THH(\SS[\Omega \Sigma S^\ell]) \otimes \HH(C) \\
		& \simeq \SS[L(S^\ell)] \otimes \HH(C) \\
		& \simeq \bigoplus\limits_{n \geq 0} \big[S^1_+\wedge_{C_n} S^{\ell n} \big] \otimes \HH(C) \\
		& \simeq \bigoplus\limits_{n \geq 0} S^1_+ \wedge_{C_n} \big[ S^{\ell n} \otimes \HH(C) \big]
\end{align*}
where the $C_n$-action on $S^{\ell n} \otimes \HH(C)$ is the diagonal action, with $C_n$ acting by permutations on $S^{\ell n}$ and by the restricted $C_n$-action of the canonical $\T$-action on $\HH(C)$. Indeed, this follows from the fact that (topological) Hochschild homology is a symmetric monoidal functor, the calculation of topological Hochschild homology of spherical group rings, see e.g.\ \cite[Corollary IV.3.3]{NS}, and Goodwillie's stable splitting of free loop spaces, see e.g.\ \cite{Cohen, Bandklayder}.
We conclude that 
\[ \HC(C[t]) \simeq \HH(C)_{h\T} \oplus \bigoplus\limits_{n \geq 1} \big[S^{\ell n} \otimes \HH(C) \big]_{hC_n}.\]
We note that $S^{\ell n}\otimes \HH(C)$ is a rational spectrum, so its $C_n$-orbits are simply determined by the action of $C_n$ on its homotopy groups. The action on $\HH_*(C)$ is trivial (as it extends to an action of the connected group $\T$), and the generator of $C_n$ acts on $S^{\ell n}\otimes \Q$ by $(-1)^{\ell (n-1)}$. Consequently, we obtain the following calculation
\[
K(C[t],C) \simeq \begin{cases} \bigoplus\limits_{n \geq 1} \Sigma^{\ell(2n-1)+1}\HH(C)& \text{ if }  \ell=|t| \text{ is odd }\\ 
					\bigoplus\limits_{n\geq 1} \Sigma^{\ell n+1}\HH(C)& \text{ if } \ell= |t| \text{ is even.} \end{cases} 
\]
as claimed.
\end{proof}

\begin{rem}
The Dundas--Goodwillie--McCarthy theorem \cite{DGM} allows to calculate $K(C[t],C)$ for $|t|>0$ without rationalization, again by trace methods, extending the rational result of Goodwillie used above. Concretely, the cyclotomic trace induces an equivalence
\[ K(C[t],C) \simeq \TC(C[t],C) \]
where the topological cyclic homology $\TC$ is constructed from the cyclotomic structure on $\THH$. As we have seen above, $\THH(C[t])$ involves homotopy orbits of cyclic group actions, which makes it plausible that $p$-adic calculations are more complicated than the rational ones provided above. Nevertheless, Hesselholt--Madsen have computed $K(k[\epsilon],k)$ for $k$ a perfect field of characteristic $p$ \cite{HM}, and Riggenbach \cite{Riggenbach} has extended the formula to perfectoid rings, using a similar method as Speirs \cite{Speirs1} had used to reproduce Hesselholt and Madsen's calculation via the Nikolaus--Scholze approach to $\TC$ \cite{NS}. For instance, as a consequence, for $k$ a perfect field of characteristic $p$ and $n>0$, we obtain for $|t|=1$
\[ K_n(k[t],k) \cong \begin{cases} \mathbb{W}_{n}(k)/V_2 \mathbb{W}_{n/2}(k) & \text{ if $n$ is even } \\ 0 & \text{ if $n$ is odd.} \end{cases}\]
Here, $\mathbb{W}(k)$ denotes the ring of big Witt vectors over $k$ and $V$ is the Verschiebung operator.

Moreover, using again the above translation between $\Z[\epsilon]$ and $\Z[t]$ with $|t|=1$, the results of Angeltveit--Gerhardt--Hesselholt \cite{AGHess} give the following result:
\[ K_{2n+2}(\Z[t],\Z) \cong \Z \quad \text{ and } |K_{2n+1}(\Z[t],\Z)| = (2n)!\]
\end{rem}

\subsubsection*{Generalized homotopy invariance of $K$-theory}
One can also combine Theorem~\ref{thm:free-vs-sz} nicely with a recent result of Burklund--Levy to extend \cite[Theorem 3 \& 4]{Waldhausen1}, see also \cite[Example 5.6]{BL}:
\begin{cor}\label{cor:free-homotopy-invariance}
Let $C$ be a right regular coherent discrete ring and $M$ a discrete $(C,C)$-bimodule which is left $C$-flat. Then the canonical map 
\[ K(C) \lto K(T_C(M)) \]
is an equivalence on connective covers.
\end{cor}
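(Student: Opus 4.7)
The plan is to combine Theorem~\ref{thm:free-vs-sz} with the Burklund--Levy theorem alluded to in the introduction of this paper. The first step is to observe that since $M$ is assumed discrete, $\Omega M$ is concentrated in homotopical degree $-1$, and hence the trivial square zero extension $C \oplus \Omega M$ is coconnective with $\pi_0(C\oplus\Omega M) = C$ and $\pi_{-1}(C\oplus\Omega M) = M$ (and zero otherwise). Under our hypotheses, $\pi_0$ is right regular coherent and $\pi_{-1}$ is left $\pi_0$-flat, which are precisely the conditions needed so that the Burklund--Levy theorem applies to yield that the canonical truncation map
\[
C \oplus \Omega M \lto \tau_{\geq 0}(C \oplus \Omega M) = C
\]
induces an equivalence on connective $K$-theory.

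Next, I would apply the localizing invariant $K$ to the motivic pullback square provided by Theorem~\ref{thm:free-vs-sz} to obtain a pullback square of spectra
\[
\begin{tikzcd}
K(C \oplus \Omega M) \ar[r] \ar[d] & K(C) \ar[d] \\
K(C) \ar[r] & K(T_C(M)) .
\end{tikzcd}
\]
In any pullback square of spectra, the fibres of the two horizontal maps are canonically equivalent. Hence the fibre of the top map $K(C\oplus \Omega M) \to K(C)$ is equivalent to the fibre of the bottom map $K(C) \to K(T_C(M))$.

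To conclude, I would use that $\tau_{\geq 0}$ preserves fibre sequences of spectra: the previous step shows that the fibre of the top map has trivial connective cover, so the same is true for the fibre of the bottom map, which is precisely the assertion that $K(C) \to K(T_C(M))$ is an equivalence on connective covers.

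The main conceptual work is already packaged into Theorem~\ref{thm:free-vs-sz} (which provides the motivic pullback square with the trivial square zero extension in the correct corner) and the Burklund--Levy theorem (which controls $K$-theory of coconnective ring spectra under regularity and flatness); the only real difficulty would be verifying that the flatness and regularity hypotheses are correctly matched to those required by \cite{BL}, but this is straightforward from the identification of the homotopy groups of $C\oplus\Omega M$ above.
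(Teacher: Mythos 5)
Your overall strategy is the same as the paper's: combine Theorem~\ref{thm:free-vs-sz} with the Burklund--Levy d\'evissage theorem. But the last step contains a genuine off-by-one gap. From the pullback square you correctly get that the two horizontal maps have a common fibre $F$, and your input from \cite{BL} (as you state it, an equivalence on \emph{connective} $K$-theory for the top map) only gives $\tau_{\geq 0}F \simeq 0$. This does \emph{not} imply that the bottom map is an equivalence on connective covers: a map of spectra whose fibre has trivial connective cover is an isomorphism on $\pi_n$ for $n\geq 1$ and injective on $\pi_0$, but need not be surjective on $\pi_0$ (the obstruction lives in $\pi_{-1}F$ via the connecting map; compare $0 \to H\Z$, whose fibre $\Sigma^{-1}H\Z$ has trivial connective cover). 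Concretely here, $K(T_C(M)) \simeq K(C)\oplus NK(C;M)$ and $F \simeq \Omega NK(C;M)$, so $\tau_{\geq 0}F=0$ only says $\pi_n NK(C;M)=0$ for $n\geq 1$, leaving $NK_0(C;M)$, i.e.\ surjectivity on $K_0$, unaddressed. The related slogan ``$\tau_{\geq 0}$ preserves fibre sequences'' is true in the sense that $\tau_{\geq 0}\fib(f)$ is the fibre of $\tau_{\geq 0}f$ computed in connective spectra, but in that prestable category a map with vanishing fibre need not be an equivalence, which is exactly the point being elided.

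The fix, and what the paper actually does, is to observe that the statement to be proved is equivalent to the top map $K(C)\to K(C\oplus\Omega M)$ being an equivalence on \emph{$(-1)$-connective} covers, i.e.\ also an isomorphism on $K_{-1}$ (equivalently $\pi_nF=0$ for $n\geq -1$); this stronger degree range is what \cite[Theorem~1.1]{BL} provides for a coconnective ring with regular coherent $\pi_0$ and left flat $\pi_{-1}$. With that input your argument closes. A minor further point: for the coconnective ring $C\oplus\Omega M$ the canonical map runs $\tau_{\geq 0}(C\oplus\Omega M)=C\to C\oplus\Omega M$ (the connective cover maps \emph{into} the ring); here this is harmless since the square-zero extension is split, but the map you should feed to \cite{BL} is this unit, not the augmentation.
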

\begin{proof}
By Theorem~\ref{thm:free-vs-sz}, we may equivalently show that the map $K(C) \to K(C \oplus \Omega M)$ is an equivalence on $(-1)$-connective covers. This follows from the d\'evissage result of Burklund--Levy \cite[Theorem 1.1]{BL}.
\end{proof}
We remark that if $C$ is right regular Noetherian, then both $K$-theories appearing in Corollary~\ref{cor:free-homotopy-invariance} are in fact connective, so that in this case, the map is an equivalence.

As a further illustration of the way one can employ Theorem~\ref{thm:free-vs-sz}, we offer the following explicit example:
\begin{ex}\label{cor:free-algebra-on-torsion-degree-1}
Let $M$ be any discrete $\Z$-module. Then the map $K(\Z) \to K(T_\Z(\Omega M))$ is an equivalence. Note that we cannot directly apply \cite[Theorem 1.1]{BL}, as $\pi_{-1}(T_{\Z}(\Omega M))$ is not  flat over $\Z$  in general (it contains $M$ as a direct summand). Instead, we appeal to Theorem~\ref{thm:free-vs-sz} and consider the following pullback square
\[
\begin{tikzcd}
	K(\Z\oplus \Omega^2M) \ar[r] \ar[d] & K(\Z) \ar[d] \\
	K(\Z) \ar[r] & K(T_\Z(\Omega M)),
\end{tikzcd}
\]
whose top horizontal map has a section induced by the connective cover map $\Z \to \Z \oplus \Omega^2 M$. As the global dimension of $\Z$ is 1, $M = \pi_{-2}(\Z \oplus M)$ has Tor-dimension $<2$ over $\Z$, and hence the conditions of \cite[Theorem 1.1]{BL} are satisfied for this map. Using that $\Z$ is Noetherian and combining \cite[Theorem 1.1]{BL} with \cite[Theorem 1.2]{AGH}, we find that the map $\Z \to \Z\oplus \Omega^{2}M$  in fact induces an equivalence on non-connective $K$-theory, not only on $(-1)$-connective $K$-theory, see also \cite[Section 3.3]{BL}.

The same argument applies verbatim to any Dedekind domain $R$ with a (possibly non-flat) module $M$ to show that $K(R) \to K(T_R(\Omega M))$ is an equivalence.
\end{ex}

In contrast to Example~\ref{cor:free-algebra-on-torsion-degree-1}, the map $K(\Z) \to K(\Z\oplus \Omega \Z/2)$ is not an equivalence. We have calculated this in a discussion with Burklund and Levy; see \cite[Example 5.13]{BL} for a similar calculation and \cite[Footnote 15]{BL}.
This shows in particular that the flatness assumption in Corollary~\ref{cor:free-homotopy-invariance} cannot generally be dropped.

\subsection{Waldhausen's generalized free products} 
We explain next how our results relate to Waldhausen's \cite{Waldhausen1, Waldhausen2}, more precisely how our results (in combination with \cite{BL}) reproduce and generalize his theorem about the $K$-theory of certain generalized free products of discrete rings. As always, let $k$ be an $\E_2$-algebra in spectra. Following Waldhausen, a map of $k$-algebras $C \to A$ is called a pure embedding, if it admits a $(C,C)$-linear retraction. For a span $A \leftarrow C \to B$ of pure embeddings of $k$-algebras, we denote 
by $\bar A$ and $\bar B$ the cofibres of the maps from $C$ to $A$ and $B$, respectively. Let us also define the $(C,C)$-bimodule $M = \bar B \otimes_C \bar A$. The following is our main result in this context.

\begin{thm}\label{cor:globale-zusammenfassung}
Let $E$ be a $k$-localizing invariant, and let $A \leftarrow C \to B$ be a span of pure embeddings of $k$-algebras.
Then there is a canonical decomposition
\[
E(A \amalg_{C} B) \simeq E(A) \oplus_{E(C)} E(B) \oplus \Sigma\Nil E(C;M).
\]
If $A$, $B$, and $C$ are discrete, and $\bar A$ and $\bar B$ are both left $C$-flat, then $A \amalg_{C} B$ is discrete as well, and equals the pushout in the category of discrete rings. If moreover the ring $C$ is right regular Noetherian (or coherent), then $\Sigma \Nil K(C;M)$ vanishes (after applying $\tau_{\geq 0}$).
\end{thm}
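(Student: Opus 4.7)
The plan is to instantiate Theorem~\ref{ThmA} for the pushout square encoded by the span $A \leftarrow C \to B$, which yields a motivic pullback square
\[
\begin{tikzcd}
\widetilde{C} \ar[r] \ar[d] & B \ar[d] \\
A \ar[r] & A \amalg_{C} B
\end{tikzcd}
\]
in which the underlying $(C,C)$-bimodule of $\widetilde{C}$ is the pullback $A \times_{B \otimes_{C} A} B$.

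The first key step is to identify $\widetilde{C}$ explicitly as a $k$-algebra. Using the $(C,C)$-linear retractions provided by the pure embeddings, one obtains $(C,C)$-bimodule splittings $A \simeq C \oplus \bar A$ and $B \simeq C \oplus \bar B$, whence $B \otimes_{C} A \simeq C \oplus \bar A \oplus \bar B \oplus M$ with $M = \bar B \otimes_{C} \bar A$. A direct computation of the pullback then shows $\widetilde{C} \simeq C \oplus \Omega M$ as $(C,C)$-bimodules. I expect this identification to refine to a $k$-algebra equivalence with the trivial square zero extension $C \oplus \Omega M$, and moreover that both structure maps $\widetilde{C} \to A$ and $\widetilde{C} \to B$ factor through the canonical retraction $\widetilde{C} \to C$ followed by the pure embeddings; this refinement, which I view as the main obstacle for the first part, is precisely what Proposition~\ref{prop:waldhausen-ring-is-sz} is designed to supply.

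With these identifications in place, applying $E$ to the motivic pullback square produces a fibre sequence $E(\widetilde{C}) \to E(A) \oplus E(B) \to E(A \amalg_{C} B)$. The retraction $\widetilde{C} \to C$ gives a canonical splitting $E(\widetilde{C}) \simeq E(C) \oplus \Nil E(C; M)$ by the very definition of $\Nil E$, and the factorization above forces the composite $\Nil E(C; M) \hookrightarrow E(\widetilde{C}) \to E(A) \oplus E(B)$ to vanish. The cofibre of $E(\widetilde{C}) \to E(A) \oplus E(B)$ thus decouples into $[E(A) \oplus_{E(C)} E(B)] \oplus \Sigma \Nil E(C; M)$, which is the asserted decomposition.

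For the final assertions, suppose $A, B, C$ are discrete and $\bar A, \bar B$ are left $C$-flat. Then a bar construction argument shows that the alternating tensor products describing the associative pushout $A \amalg_{C} B$ remain concentrated in degree zero, so that $A \amalg_{C} B$ is discrete and agrees with the classical generalized free product. For the vanishing statement, assuming $C$ is right regular Noetherian (or coherent), it suffices to show that $\tau_{\geq 0} K(C) \to \tau_{\geq 0} K(C \oplus \Omega M)$ is an equivalence, after which $\tau_{\geq 0}\Sigma \Nil K(C; M) \simeq 0$ follows formally. I would obtain this equivalence directly from the d\'evissage theorem of Burklund--Levy \cite[Theorem 1.1]{BL}, applied to the coconnective ring $C \oplus \Omega M$ whose $\pi_{0}$ is $C$.
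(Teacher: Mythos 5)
Your proposal is correct and follows essentially the same route as the paper: identify $\widetilde{C}$ with the trivial square zero extension $C\oplus\Omega M$ via the retractions (the paper's Lemma~\ref{prop:waldhausen-ring-is-sz}), use the factorization of the structure maps through $C$ to split off $\Sigma\Nil E(C;M)$ from the resulting fibre sequence, and invoke Burklund--Levy for the vanishing. The only place you are sketchier than the paper is the discreteness claim, where the paper makes your ``alternating tensor products'' precise by computing $A\otimes_{C\oplus\Omega M}B\simeq\bigoplus_{n\geq 0}A\otimes_C M^{\otimes_C^n}\otimes_C B$ (Lemmas~\ref{lemma:tensor-over-sz} and~\ref{lemma:pushout-discrete}); this is the same computation in substance.
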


\begin{rem}
\begin{enumerate}
\item Theorem~\ref{cor:globale-zusammenfassung} in particular reproves \cite[Theorem 1]{Waldhausen1}, which treats connective $K$-theory\footnote{The result has already been extended to nonconnective $K$-theory in \cite{BartelsLueck}.} of discrete rings, and the part of \cite[Theorem~4]{Waldhausen1} which concerns \cite[Theorem 1]{Waldhausen1}. We remark that Waldhausen makes some stronger assumptions on the underlying left module structure of the bimodules $\bar{A}$ and $\bar{B}$.
\item One can also apply Theorem~\ref{cor:globale-zusammenfassung} to trivial square zero extensions $C \oplus \bar A \leftarrow C\to C \oplus \bar B$. Together with Corollary~\ref{cor:Nil-vs-N}, this ought to recover and is certainly related to \cite[Theorem 0.4]{DKR}. As \cite{DKR} works with ordinary categories, see again Remark~\ref{rem:exact-vs-stable-concrete-case} for the relevant comparison.
\end{enumerate}
\end{rem}

In order to prove Theorem~\ref{cor:globale-zusammenfassung} we will need a number of preliminary observations. We begin with the following. We consider a span of pure embeddings  $A \leftarrow C \to B$ and form the pushout square of $k$-algebras
\[
\begin{tikzcd}
 C \ar[d]\ar[r] & B \ar[d] \\ 
 A \ar[r] & A \amalg_{C} B.
\end{tikzcd}
\]
\begin{lemma}\label{prop:waldhausen-ring-is-sz}
The ring $\widetilde{C}$ associated by Theorem~\ref{ThmA} to the above pushout square is canonically equivalent to the trivial square zero extension $C \oplus \Omega M$. In particular, it depends only on $C$ and the $(C,C)$-bimodule $\bar B \otimes_C \bar A$.
\end{lemma}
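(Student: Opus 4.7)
The plan is to realise $C\oplus\Omega M$, viewed as the $k$-algebra $C\boxtimes_{C\oplus M}C$ identified in Lemma~\ref{lem:pullback-bimodule}, as the ring $\widetilde{C}$ of the given pushout square via a morphism of Milnor contexts. Recall from the paragraph following Theorem~\ref{ThmA} that $\widetilde{C}$ arises as $A\boxtimes_{B\otimes_C A}B$ from the Milnor context $(A,B,B\otimes_C A)$ (for which $C$ is a tensorizer), so I aim to construct a morphism $(C,C,C\oplus M)\to(A,B,B\otimes_C A)$ of Milnor contexts inducing an equivalence on the associated $\boxtimes$-rings.

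The crucial input is the pure embedding hypothesis: the $(C,C)$-bimodule splittings $A\simeq C\oplus\bar A$ and $B\simeq C\oplus\bar B$ yield a decomposition $B\otimes_C A\simeq C\oplus\bar A\oplus\bar B\oplus M$ of $(C,C)$-bimodules, and I take as the required $(C,C)$-bimodule map $C\oplus M\to B\otimes_C A$ the inclusion of the first and fourth summands (compatibility with the basepoints being automatic). Together with the $k$-algebra maps $C\to A$ and $C\to B$, this assembles into a morphism of Milnor contexts, and Lemma~\ref{lemma:map-of-milnor-contexts} hands me a $k$-algebra map $\phi\colon C\oplus\Omega M\to\widetilde{C}$.

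To verify that $\phi$ is an equivalence it suffices, as the forgetful functor $\Alg(k)\to\Mod(k)$ is conservative, to check this on underlying $k$-modules. By Lemma~\ref{lemma:pullback-bimodule-ring} applied to both Milnor contexts, each side is a pullback: $C\oplus\Omega M\simeq C\times_{C\oplus M}C$ and $\widetilde{C}\simeq A\times_{B\otimes_C A}B$, and $\phi$ is the canonical map of pullbacks induced by the morphism of cospans above. Its fibre is computed as the pullback of the cospan of vertical fibres, which unwinds to $\Omega\bar A\to\Omega(\bar A\oplus\bar B)\leftarrow\Omega\bar B$ with both maps being the canonical summand inclusions, whose pullback is manifestly zero. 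I do not foresee any serious obstacle: the only delicate point is producing the map of Milnor contexts, which relies precisely on the pure embedding splittings to identify $\bar A$, $\bar B$, and $M$ as summands of $B\otimes_C A$; once this is in place, the equivalence drops out of a routine fibre calculation.
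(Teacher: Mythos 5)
Your proposal is correct and follows essentially the same route as the paper: both construct the morphism of Milnor contexts $(C,C,C\oplus M)\to(A,B,B\otimes_C A)$ using the $(C,C)$-bimodule splittings coming from the pure-embedding retractions, invoke Lemma~\ref{lemma:map-of-milnor-contexts} to get the $k$-algebra map, identify $C\boxtimes_{C\oplus M}C$ with $C\oplus\Omega M$ via Lemma~\ref{lem:pullback-bimodule}, and see that the induced map on pullbacks of the horizontal cospans is an equivalence. Your explicit fibre computation just spells out what the paper dismisses with ``by construction''.
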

\begin{proof}
By construction, $\widetilde{C} = A \boxtimes_{B\otimes_{C}A}B$ for the Milnor context $(A, B, B\otimes_C A)$. We may also consider the Milnor context $(C,C,C \oplus M)$ and observe that the commutative diagram
\begin{equation} \label{eq:comparison-cospan}
\begin{tikzcd}
	C \ar[d] \ar[r] & C \oplus M \ar[d] & C \ar[l] \ar[d] \\
	A \ar[r] & B\otimes_C A & B \ar[l] 
\end{tikzcd}
\end{equation}
provides a map $ (C,C,C \oplus M) \to (A,B,B\otimes_C A)$ of Milnor contexts in the sense of Definition~\ref{def:map-of-milnor-contexts}, where the middle vertical map is induced by $(C,C)$-bimodule retractions of the pure embeddings. 
By Lemma~\ref{lemma:map-of-milnor-contexts}, we get an induced map of $k$-algebras $C \boxtimes_{C\oplus M} C \to A \boxtimes_{B\otimes_{C}A}B$ whose underlying map of spectra is obtained by taking pullbacks of the horizontal lines in \eqref{eq:comparison-cospan}. By construction, this map is an equivalence, and hence so is the above map of $k$-algebras.
Finally, by Lemma~\ref{lem:pullback-bimodule}, the $k$-algebra $C \boxtimes_{C\oplus M} C$  identifies with the trivial square zero extension $C \oplus \Omega M$. 
\end{proof}

In order to check when the the pushout $A \amalg_{C} B$ is discrete provided $A,B,$ and $C$ are, we
will make use of the following well known calculation. For the reader's convenience, we provide an argument here. We thank Achim Krause for suggesting the use of gradings in the proof.
\begin{lemma}\label{lemma:tensor-over-sz}
Let $C$ be a $k$-algebra and $M$ a $(C,C)$-bimodule. There is a canonical  equivalence of $(C,C)$-bimodules
\[ 
C \otimes_{C\oplus \Omega M} C \simeq \bigoplus\limits_{n \geq 0} M^{\otimes_{C}^n}.
\]
\end{lemma}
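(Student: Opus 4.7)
The plan is to follow the suggestion of using gradings in order to derive a recursive formula for $X := C \otimes_{C \oplus \Omega M} C$ that can be solved explicitly. To set this up, I would equip $R := C \oplus \Omega M$ with the structure of a $\Z_{\geq 0}$-graded $C$-algebra by placing $C$ in degree $0$ and $\Omega M$ in degree $1$; the multiplication is manifestly compatible with the grading because $\Omega M \cdot \Omega M = 0$ (so $R$ is concentrated in degrees $0$ and $1$, and no multiplication can land outside this range). The augmentation $R \to C$ and its section $C \to R$ are then maps of graded $C$-algebras, so $C$ becomes a graded $R$-bimodule concentrated in degree $0$. Consequently $X$ inherits the structure of a graded $(C,C)$-bimodule with grading pieces $X_n$, and it suffices to identify each $X_n$ separately.

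I would then apply $(-) \otimes_R C$ to the fibre sequence $\Omega M \to R \to C$ of $R$-bimodules. Since the right $R$-action on $\Omega M$ factors through the augmentation $\epsilon \colon R \to C$, the projection formula gives $\Omega M \otimes_R C \simeq \Omega M \otimes_C (C \otimes_R C) = \Omega M \otimes_C X$, producing a fibre sequence $\Omega M \otimes_C X \to C \to X$ of graded $(C,C)$-bimodules. Reading this sequence off grading by grading (using that $C$ sits purely in degree $0$, and $\Omega M$ purely in degree $1$, so that $(\Omega M \otimes_C X)_n = \Omega M \otimes_C X_{n-1}$ for $n \geq 1$ and vanishes for $n=0$), I find $X_0 \simeq C$ in grading $0$, and in grading $n \geq 1$ the sequence reduces to $\Omega M \otimes_C X_{n-1} \to 0 \to X_n$, whence $X_n \simeq \Sigma(\Omega M \otimes_C X_{n-1}) \simeq M \otimes_C X_{n-1}$ via $\Sigma \Omega M \simeq M$.

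An immediate induction now gives $X_n \simeq M^{\otimes_C n}$ for every $n \geq 0$, and passing to the underlying ungraded bimodule yields the claimed equivalence $C \otimes_{C \oplus \Omega M} C \simeq \bigoplus_{n \geq 0} M^{\otimes_C n}$ of $(C,C)$-bimodules. The main subtlety to confront is the careful graded $\infty$-categorical bookkeeping -- in particular, verifying that $R$ refines to a $\Z_{\geq 0}$-graded $\E_1$-$C$-algebra, that the fibre sequence above refines to one of graded $(C,C)$-bimodules, and that the projection formula for $\Omega M \otimes_R C$ respects gradings -- but once this formalism is in place, the actual computation is purely formal.
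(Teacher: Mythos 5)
Your proposal is correct and follows essentially the same route as the paper: grade $C\oplus\Omega M$ with $\Omega M$ in degree $1$, tensor the augmentation fibre sequence $\Omega M \to C\oplus \Omega M \to C$ with $C$ over $C\oplus\Omega M$, use that the $(C\oplus\Omega M)$-action on $\Omega M$ factors through the augmentation to rewrite $\Omega M\otimes_{C\oplus\Omega M}C \simeq \Omega M\otimes_C(C\otimes_{C\oplus\Omega M}C)$, and solve the resulting recursion degreewise before passing to the underlying ungraded object. The only cosmetic difference is that you extract the recursion $X_n\simeq M\otimes_C X_{n-1}$ by reading the fibre sequence in each graded degree, whereas the paper first splits the sequence via the multiplication retraction $X\to C$; both yield the same induction.
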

\begin{proof}
We observe that the trivial square zero extension $C \oplus \Omega M$ is the underlying augmented algebra in $\BMod(C,C)$ of a graded algebra where we put $\Omega M$ in degree 1. We then consider  the fibre sequence of graded $(C \oplus \Omega M,C \oplus \Omega M)$-bimodules induced by the augmentation map
\[ 
\Omega M \lto C \oplus \Omega M \lto C 
\]
and observe that the $(C\oplus \Omega M, C\oplus \Omega M)$-bimodule structure on $\Omega M$ obtained from this sequence is the  $(C,C)$-bimodule structure restricted along the augmentation. Applying the functor $(-)\otimes_{C\oplus \Omega M} C$ and using the equivalence
\[ 
\Omega M \otimes_{C \oplus \Omega M} C \simeq \Omega M \otimes_C (C \otimes_{C\oplus \Omega M} C)
\]
we obtain a fibre sequence of $(C \oplus\Omega M, C)$-bimodules and, a fortiori, of $(C,C)$-bimodules
\[ 
\Omega M \otimes_C (C \otimes_{C\oplus \Omega M} C) \lto C \lto C \otimes_{C \oplus \Omega M} C
\]
whose final map admits the multiplication map as a retraction. Consequently, we have an equivalence
\[
 C \otimes_{C\oplus \Omega M} C \simeq C \oplus \big[M \otimes (C\otimes_{C\oplus \Omega M} C)\big],
 \]
where $M$ is still in degree $1$. Inductively, we see that the degree $n$ part of the left-hand side is given by $M^{\otimes^{n}_C}$. Finally, we use that the underlying object functor taking a graded object to the sum of its pieces is symmetric monoidal and commutes with colimits. This finishes the proof of the lemma.
\end{proof}

\begin{lemma}\label{lemma:pushout-discrete}
Let $A \leftarrow C \to B$ be a span of pure embeddings of discrete $k$-algebras. If $A$ and $B$ are left $C$-flat, the pushout $A \amalg_C B$ in $\Alg(k)$ is discrete, and thus equals the corresponding pushout in the category of discrete $k$-algebras.
\end{lemma}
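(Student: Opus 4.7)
The plan is to apply Theorem~\ref{ThmB} to identify the pushout with a concrete $\odot$-ring, compute its underlying spectrum via Lemma~\ref{lemma:tensor-over-sz}, and check discreteness summand by summand.

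First, I would take the Milnor context $(A,B,B\otimes_C A)$, for which $C$ is tautologically a tensorizer (the comparison map $B\otimes_C A \to B\otimes_C A$ is the identity). By Theorem~\ref{ThmB}, the pushout $A\amalg_C B$ in $\Alg(k)$ agrees with the $\odot$-ring $A \wtimes{\widetilde C}{B\otimes_C A} B$, where $\widetilde C = A\boxtimes_{B\otimes_C A} B$; and by Proposition~\ref{prop:underlying-bimodule} its underlying spectrum is $A\otimes_{\widetilde C} B$. Then I would invoke Proposition~\ref{prop:waldhausen-ring-is-sz} to identify $\widetilde C$ with the trivial square zero extension $C\oplus \Omega M$ for $M=\bar B\otimes_C \bar A$. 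A quick inspection of its proof (applying Lemma~\ref{lemma:map-of-milnor-contexts} to the map of Milnor contexts $(C,C,C\oplus M)\to (A,B,B\otimes_C A)$) shows that the two $k$-algebra maps $\widetilde C \to A$ and $\widetilde C \to B$ both factor through the augmentation $\widetilde C\simeq C\oplus \Omega M\to C$. Consequently the $\widetilde C$-module structures on $A$ and $B$ are restricted along this augmentation.

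Next I would rewrite, using the factorization above and then Lemma~\ref{lemma:tensor-over-sz},
\[
A\otimes_{\widetilde C} B \simeq A\otimes_C (C\otimes_{\widetilde C} C)\otimes_C B \simeq \bigoplus_{n\geq 0} A\otimes_C (\bar B\otimes_C \bar A)^{\otimes_C^n}\otimes_C B.
\]
Each summand is an iterated derived tensor product of the form $A\otimes_C \bar B\otimes_C \bar A\otimes_C \cdots\otimes_C \bar A\otimes_C B$ in which every factor to the right of an $\otimes_C$ belongs to $\{\bar A,\bar B,B\}$. Since $A$ and $B$ are left $C$-flat by hypothesis, and $\bar A,\bar B$ are $(C,C)$-bimodule summands of $A$ and $B$ (by purity of the embeddings), they are all left $C$-flat. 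Bracketing from the left, at each step we tensor on the right with a left $C$-flat module, so the derived tensor product remains discrete. Taking the direct sum preserves discreteness, so $A\amalg_C B$ is discrete.

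For the last claim, since $A\amalg_C B$ is discrete and hence $0$-truncated in $\Alg(k)$, its universal property restricts to the 1-category of discrete $k$-algebras, identifying it with the 1-categorical pushout. The only bit requiring care is the bookkeeping of module structures in the second step, namely explicitly extracting from the proof of Proposition~\ref{prop:waldhausen-ring-is-sz} that the maps $\widetilde C\to A,B$ factor through the augmentation; once this is in place the rest is a direct combination of the results just recalled.
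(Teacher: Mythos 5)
Your proposal is correct and follows essentially the same route as the paper's proof: reduce via Theorem~\ref{ThmB} and Lemma~\ref{prop:waldhausen-ring-is-sz} to showing $A\otimes_{C\oplus\Omega M}B$ is discrete, use the factorization through the augmentation (diagram~\eqref{eq:comparison-cospan}) together with Lemma~\ref{lemma:tensor-over-sz} to get the decomposition $\bigoplus_{n\geq 0}A\otimes_C M^{\otimes_C^n}\otimes_C B$, and conclude by flatness of $\bar A$ and $\bar B$ as bimodule summands. Your bookkeeping of the module structures and the bracketed flatness argument are exactly the points the paper also relies on.
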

In fact, the proof shows that the same conclusion holds if $A$ and $B$ are assumed to be right $C$-flat.
\begin{proof}
Combining Theorem~\ref{ThmB}, Theorem~\ref{thm:GLT}, and Lemma~\ref{prop:waldhausen-ring-is-sz}, it suffices to prove that $A \otimes_{C \oplus \Omega M} B$ is discrete, where $M = \bar B\otimes_C \bar A$ and $\bar A =\cof(C \to A)$ and $\bar B =\cof(C \to B)$ are as before. By diagram~\eqref{eq:comparison-cospan} above and Lemma~\ref{lemma:tensor-over-sz}, we have equivalences
\[ A \otimes_{C \oplus \Omega M} B \simeq A \otimes_C (C \otimes_{C \oplus \Omega M} C) \otimes_C B \simeq \bigoplus\limits_{n\geq 0} A \otimes_C M^{\otimes_C^n} \otimes_C B.\]
Note that the tensor product (over $C$) of two left flat $(C,C)$-bimodules is again a left flat $(C,C)$-bimodule. We deduce that $M$ and therefore $M^{\otimes_C^n}$ are left flat $(C,C)$-bimodules for all $n \geq 0$. Consequently, each summand in the final term in the above display is a left flat $(C,C)$-bimodule, and therefore so is the sum as needed. 
\end{proof}

We can now prove our main result of this subsection.
\begin{proof}[Proof of Theorem~\ref{cor:globale-zusammenfassung}]
Applying Lemma~\ref{prop:waldhausen-ring-is-sz} and Theorem~\ref{ThmA}, we obtain the following $E$-cartesian diagram.
\[ \begin{tikzcd}
	C\oplus \Omega M \ar[r] \ar[d] & B \ar[d] \\
	A \ar[r] & A \amalg_C B
\end{tikzcd}\]
By diagram \eqref{eq:comparison-cospan}, the left vertical and top horizontal map are induced by the composite of $C\oplus \Omega M \to C$ followed by the canonical maps to $A$ and $B$, respectively. As a consequence, we obtain a cartesian diagram 
\[\begin{tikzcd}
	E(C) \oplus \Nil E(C;M) \ar[r] \ar[d] & E(B) \ar[d] \\
	E(A) \ar[r] & E(A \amalg_C B)
\end{tikzcd}\]
where the left vertical and top horizontal maps again factor over the projection to $E(C)$, so we obtain the canonical decomposition as claimed. 
The second claim is Lemma~\ref{lemma:pushout-discrete}. 
Finally, the vanishing of the $K$-theoretic Nil-term  under the stated regularity assumptions follows from \cite[Theorem 1.1]{BL}.
\end{proof}

\begin{ex}\label{Ex:A1-invariant}
In this example, we explain a consequence of Theorem~\ref{cor:globale-zusammenfassung} to $\mathbb{A}^1$-invariant $k$-localizing invariants. We recall that a $k$-localizing invariant is called $\A^1$-invariant
if, for discrete $k$-algebras $C$, the canonical map $E(C) \to E(C[x])$ is an equivalence. Examples include Weibel's homotopy $K$-theory $KH(-)$, $K(1)$-localized $K$-theory $L_{K(1)}K(-)$ \cite[Corollary 4.24]{LMMT}\footnote{This result follows already from the work of Bhatt--Clausen--Mathew \cite{BCM} as pointed out in \cite{LMMT}.}, and periodic cyclic homology $\HP(-/k)$ over $\Q$-algebras $k$ \cite[Cor.~3.12]{Kassel}. Moreover, Sosnilo recently proved that if $k$ is connective with $\pi_0(k)$ an $\F_p$- (or $\Z[\tfrac{1}{p}]$)-algebra and $E$ is a finitary $k$-localizing invariant, then $E(-)[\tfrac{1}{p}]$ (or $E(-)/p$, respectively) is $\mathbb{A}^1$-invariant, extending Weibel's results \cite{Weibel1} from $K$-theory to general finitary $k$-localizing invariants, \cite{Sosnilo}.

An argument of Weibel \cite[Theorem 1.2 (iv)]{Weibel-homotopy} shows that for any such $\A^1$-invariant $k$-localizing invariant and any left flat discrete $(C,C)$-bimodule $M$, the canonical map $E(C) \to E(T_C(M))$ is an equivalence, so that $NE(C;M) = 0$: Indeed, in loc.\ cit.\ it is observed that if $A = \oplus_{n\geq 0} A_n$ is a non-negatively graded discrete ring, then there is a canonical map $A \to A[t]$ sending $(a_n)_{n\geq 0}$ to $(a_n t^n)_{n\geq 0}$. Evaluating this map on $t=0$ and $t=1$ we find that the identity of $A$ is $\mathbb{A}^1$-homotopic to the composite $A \to A_0 \to A$, giving that the canonical map $E(A_0) \to E(A)$ is an equivalence. Since the tensor algebra $T_C(M)$ is a non-negatively graded discrete ring (since $M$ is left flat) with graded degree 0 part equal to $C$, the claim follows.

As a consequence of Theorem~\ref{cor:globale-zusammenfassung} we then obtain the following result for $\A^1$-invariant $k$-localizing invariants. Namely, for pure embeddings $A \leftarrow C \to B$ of discrete rings such that $A$ and $B$ are  flat as left $C$-modules, the square
\[\begin{tikzcd}
	E(C) \ar[r] \ar[d] & E(B) \ar[d] \\
	E(A) \ar[r] & E(A \amalg_C B)
\end{tikzcd}\]
is cartesian. Indeed, by Corollary~\ref{cor:free-homotopy-invariance}, the failure for this square being cartesian is equivalant to $NE(C;M)$ which vanishes as we have explained above.

For homotopy $K$-theory this result was proven earlier by Bartels and L\"uck in \cite[Theorem 11.3]{BartelsLueck} and for periodic cyclic homology over a $\Q$-algebra, this generalizes a calculation of Burghelea \cite[Prop.~$\mathrm{II}_p$]{Burghelea} from free to more general amalgamated products, see Example~\ref{Ex:Burghelea} for the details.
\end{ex}

\subsubsection*{Group rings}
A special class of spans $A \leftarrow C \to B$ of pure embeddings of discrete rings (even satisfying Waldhausen's stronger assumptions on $\bar A$ and $\bar B$ indicated above) is given by embeddings of group rings induced by the inclusion of a common subgroup $H$ in two groups $G$ and $G'$. 
Concretely, let $R$ be a $k$-algebra, and consider the span $R[G] \leftarrow R[H] \to R[G']$, and let $M$ be the corresponding $(R[H], R[H])$-bimodule considered above. We note that there is a canonical equivalence
\[ R[G] \amalg_{R[H]} R[G'] \simeq R[G \star_{H} G'],\]
where $G \star_H G'$ denotes the amalgamated free product. 
Indeed, we have $R[G] = R\otimes_\SS \SS[G]$, the functor $\Alg(\SS) \to \Alg(R)$ sending $A$ to $R\otimes_\SS A$ preserves colimits, and the forgetful functor $\Alg(R) \to \Alg(\SS)$ preserves contractible colimits.
The latter can be checked after passing to categories of perfect modules (see the first paragraph of the proof of Theorem~\ref{thm:thm-B-im-Text}) and using that the forgetful functor $\Cat^{R}_{\infty} = \Mod_{\Perf(R)}(\Cat_{\infty}^{\perf}) \to \Cat_{\infty}^{\perf}$ preserves all colimits.
Moreover the functor $\SS[-]\colon \Grp(\Spc) \to \Alg(\Sp)$ is a left adjoint and hence also preserves colimits. Finally, it is a classical computation that the pushout in $\Grp(\Spc)$ of discrete groups along inclusions is a discrete group and hence coincides with the pushout in $\Grp(\Set)$, see \cite[Theorem 7.1.9]{Geoghegan} for a generalization of this fact and the proof of Proposition~\ref{prop:coordinate-axes-sphere} for details.

\begin{cor}\label{cor:waldhausen-group-ring}
In the above situation, we have 
\[E(R[G \star_H G']) \simeq E(R[G]) \oplus_{E(R[H])} E(R[G]) \oplus \Sigma \Nil E(R[H]; M)\] 
where $M$ is the $(R[H],R[H])$-bimodule as above.
\end{cor}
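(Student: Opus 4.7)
The plan is to deduce the corollary directly from Theorem~\ref{cor:globale-zusammenfassung} applied to the span of $k$-algebras $R[G] \leftarrow R[H] \to R[G']$. Two things need to be verified: that both structure maps are pure embeddings, and that the pushout in $\Alg(k)$ computes the group ring of the amalgamated product $G \star_H G'$.

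For purity, the sought $(R[H],R[H])$-bilinear retraction of $R[H] \hookrightarrow R[G]$ is the $R$-linear map $\pi \colon R[G] \to R[H]$ defined on the basis $G$ by $\pi(g) = g$ if $g \in H$ and $\pi(g) = 0$ otherwise. The key observation is that for $h_1, h_2 \in H$ and $g \in G$, the element $h_1 g h_2$ lies in $H$ if and only if $g$ does, because $H$ is a subgroup of $G$. Hence $\pi$ commutes with left and right multiplication by elements of $R[H]$. The retraction for $R[H] \to R[G']$ is constructed identically.

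For the identification of the pushout, we expand the argument sketched before the corollary. Since $R[G] \simeq R \otimes_{\SS} \SS[G]$, and the base change functor $R \otimes_{\SS} (-) \colon \Alg(\SS) \to \Alg(k)$ is a left adjoint, it suffices to compute the pushout of $\SS[G] \leftarrow \SS[H] \to \SS[G']$ in $\Alg(\SS)$. The spherical group ring functor $\SS[-] \colon \Grp(\Spc) \to \Alg(\SS)$ is itself a left adjoint (to the units functor), so this pushout is computed as $\SS[G \amalg_H G']$ where the amalgamated product is formed in $\Grp(\Spc)$. Finally, by the classical fact that the pushout in $\Grp(\Spc)$ of a span of discrete groups along inclusions of subgroups is again discrete and agrees with the pushout in $\Grp(\Set)$, namely the amalgamated free product $G \star_H G'$ (see \cite[Theorem 7.1.9]{Geoghegan}), we conclude $R[G] \amalg_{R[H]} R[G'] \simeq R[G \star_H G']$ in $\Alg(k)$.

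With these two verifications in place, Theorem~\ref{cor:globale-zusammenfassung} immediately produces the desired decomposition, with $M = \overline{R[G']} \otimes_{R[H]} \overline{R[G]}$ where $\overline{R[G]} = \cof(R[H] \to R[G])$ and similarly for $\overline{R[G']}$. There is no substantive obstacle here beyond these formal checks; the content of the corollary lies entirely in the preceding results.
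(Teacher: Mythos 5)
Correct, and essentially the same as the paper's treatment: the paper also obtains the corollary by feeding the span $R[G] \leftarrow R[H] \to R[G']$ into Theorem~\ref{cor:globale-zusammenfassung}, identifying the pushout with $R[G \star_H G']$ via base change from $\SS$, the left-adjointness of $\SS[-]$, and the discreteness of pushouts of discrete groups along injections. Your explicit check of purity (the $(R[H],R[H])$-linear projection killing $G\setminus H$, using that $h_1gh_2\in H$ iff $g\in H$) is a detail the paper merely asserts, and it is correct.
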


\begin{ex}
As an amusing special case of the the above, we obtain the following result which for algebraic $K$-theory and in the case of discrete rings was already proven in \cite{Chen:2012uy} and \cite[Corollary 3.27]{DKR}. 
So let $R$ be a $k$-algebra. Let $C_{2}$ denote the cyclic group of order 2. Then $C_{2} \star C_{2}$ is isomorphic to the infinite dihedral group $D_{\infty}$.
Then the failure of the square
\[ \begin{tikzcd}
	E(R) \ar[r] \ar[d] & E(R[C_2]) \ar[d] \\
	E(R[C_2]) \ar[r] & E(R[D_\infty])
\end{tikzcd}\]
being a pushout is equivalent to $NE(R;R)$. Indeed, in this situation the $(R,R)$-bimodule $M$ is just $R$, so the claim follows from Corollary~\ref{cor:Nil-vs-N}.

In fact, more generally, for $H \subseteq G$ a central subgroup of index 2, one can consider the amalgamated product $G \star_H G$. In this case, one again finds that the relevant $(R[H],R[H])$-bimodule $M$ is simply $R[H]$, therefore $NE(R[H];R[H])$ is the failure of the diagram
\[\begin{tikzcd}
	E(R[H]) \ar[r] \ar[d] & E(R[G) \ar[d] \\
	E(R[G]) \ar[r] & E(R[G\star_H G])
\end{tikzcd}\]
being a pushout.
\end{ex}

\begin{ex}\label{Ex:Burghelea}
In this example, we explain a relation to the work of Burghelea \cite{Burghelea} where (periodic) cyclic homology of group rings is studied. For this, we let $k$ be a commutative ring and, for $k$-algebras $A$, we denote by $\HC(A) = \HH(A/k)_{h\T}$ and $\HP(A) = \HH(A/k)^{t\T}$ the cyclic and periodic cyclic homology of $A$ relative to $k$, and by $\widetilde{\HC}(A)$ the cofibre of the canonical map $\HC(k) \to \HC(A)$, and similarly for $\HP$.
Now let $G$ and $G'$ be discrete groups. \cite[Prop.~II \& $\mathrm{II}_p$]{Burghelea} say the following, where $\Lambda$ is the set of conjugacy classes of elements of $G \star G'$ disjoint from the subgroups $G$ and $G'$:
\begin{enumerate}
\item $\widetilde{\HC}(k[G \star G']) = \widetilde{\HC}(k[G]) \oplus \widetilde{\HC}(k[G']) \oplus \bigoplus\limits_{\Lambda} k$, and
\item $\widetilde{\HP}(k[G\star G']) = \widetilde{\HP}(k[G]) \oplus \widetilde{\HP}(k[G'])$,
\end{enumerate}
though in loc.\ cit.\ it is not explicitly written that reduced cyclic and periodic homologies are considered.
We show here that this result is correct precisely for $\Q$-algebras (the case (2) for $\Q$-algebras follows e.g.\ from Example~\ref{Ex:A1-invariant} above) and give a corrected formula for the (periodic) cyclic homology of $k[G\star G']$ for general commutative rings $k$. Burghelea has also published an erratum to his paper on the arxiv \cite{Burghelea-erratum}, see specifically Proposition 3.2 there. Specializing Corollary~\ref{cor:waldhausen-group-ring} to (periodic) cyclic homology relative to $k$ and using Corollary~\ref{cor:Nil-vs-N}, we obtain the following result:
\begin{align*}
\widetilde{\HC}(k[G\star G']) \simeq \widetilde{\HC}(k[G]) \oplus \widetilde{\HC}(k[G']) \oplus N\HC(k;M) \\
\widetilde{\HP}(k[G\star G']) \simeq \widetilde{\HP}(k[G]) \oplus \widetilde{\HP}(k[G']) \oplus N\HP(k;M)
\end{align*}
where $M$, as one directly checks, is the free $k$-module on the set $S= (G\setminus\{e\}) \times (G' \setminus \{e'\})$. 
Moreover, we have 
\[ N\HC(k;M) = \widetilde{\HH}(T_k(M))_{h\T} \quad \text{ and } N\HP(k;M) = \widetilde{\HH}(T_k(M))^{t\T}.\]
Now, \cite[Prop.~3.1.5]{Hesselholt-Witt} says that reduced Hochschild homology of tensor algebras decomposes $\T$-equivariantly as 
\[ 
\widetilde{\HH}(T_k(M)) = \bigoplus\limits_{\omega \in \Omega_0(S)} k \otimes (S^1/C(\omega))_+,
\]
where $\Omega_0(S)$ is the set of non-empty cyclic words in $S$, and $C(\omega)$ is a cyclic group of finite order $n(\omega)$ associated with the word $\omega$.
Applying homotopy $\T$-orbits, we obtain the following equivalence, see Lemma~\ref{lemma:convenience} for a generalization which we will use in Example~\ref{ex:amalgamation-with-central-subgroup}.
\[ 
N\HC(k;M) \simeq \widetilde{\HH}(T_k(M))_{h\T} \simeq \bigoplus\limits_{\omega \in \Omega_0(S)}  k \otimes BC(\omega)_+
\]
From the norm fibre sequence 
\[\Sigma\widetilde{\HH}(T_k(M))_{h\T} \lto \widetilde{\HH}(T_k(M))^{h\T} \lto \widetilde{\HH}(T_k(M))^{t\T}\]
the fact that the homotopy $\T$-fixed points are 1-coconnective, the Tate construction is 2-periodic, and the group homology of $C(\omega)$ is given by the kernel and cokernel of the multiplication by $n(\omega)$ on $k$ in positive even and odd degrees, respectively, we then find  
\[ N\HP(k;M) \simeq \widetilde{\HH}(T_k(M))^{t\T} \simeq \bigoplus\limits_{\omega \in \Omega_0(S)} \Sigma k/n(\omega)[u^{\pm}] \]
where $|u|=-2$ and $k/n(\omega)$ denotes the cofibre of multiplication by $n(\omega)$ on $k$.
In total we obtain
\begin{align*}
\widetilde{\HC}(k[G\star G']) \simeq \widetilde{\HC}(k[G]) \oplus \widetilde{\HC}(k[G']) \oplus  \bigoplus\limits_{\omega \in \Omega_0(S)}  k \otimes BC(\omega)_+ \\
\widetilde{\HP}(k[G\star G']) \simeq \widetilde{\HP}(k[G]) \oplus \widetilde{\HP}(k[G']) \oplus  \bigoplus\limits_{\omega \in \Omega_0(S)}  \Sigma k/n(\omega)[u^{\pm}]
\end{align*}
In case $k$ contains $\Q$, we find that $k/n(\omega) = 0$ and $k \otimes BC(\omega)_+ \simeq k$. Moreover we note that there is a canonical bijection between the indexing sets $\Lambda$ and $\Omega_0(S)$ appearing above, as follows from the general description of conjugacy classes of elements in free products of groups as cyclically reduced cyclic words in the alphabet given by the two groups. Consequently, \cite[Prop.~II \& $\mathrm{II}_p$]{Burghelea} are correct if $k$ contains $\Q$. If not, then already the simplest non-trivial case gives a counter example: In case $G=G' = C_2$ is the cyclic group of order 2, the above concretely become
\[ 
N\HC(k;k) \simeq \bigoplus\limits_{m \geq 1}  k \otimes  (BC_m)_+ \quad \text{ and } \quad N\HP(k;k) \simeq \bigoplus\limits_{m \geq 1} \Sigma k/m[u^\pm].
\]
Therefore, if $k$ does not contain $\Q$, $N\HC(k;k)$ and $N\HP(k;k)$ have non-trivial terms of non-zero degree. 
\end{ex}

\begin{ex}\label{ex:amalgamation-with-central-subgroup}
We now expand on Example~\ref{Ex:Burghelea} and consider the case of a (possibly non-trivial) central subgroup $H \subseteq G,G'$. As above, we fix a commutative ring $k$ and aim to evaluate Hochschild and cyclic homology of $k[G\star_H G']$. In this situation, it turns out that the $(k[H],k[H])$-bimodule $M$ appearing above is the (symmetric) bimodule associated to the free $k[H]$-module on the set $S=(G/H \setminus eH) \times (G'/H \setminus e'H)$. In particular, we find that $T_{k[H]}(M) = T_k(M') \otimes_k k[H]$, where $M'$ is the (symmetric) bimodule associated to the free $k$-module on the set $S$. Applying Corollary~\ref{cor:waldhausen-group-ring} and Corollary~\ref{cor:Nil-vs-N} to the $k$-localizing invariant $\HH \colon \Cat_\infty^k \to \Mod_k^{B\T}$ then gives the following equivalence of $k$-modules with $\T$-action:
\[ \HH(k[G \star_H G']) \simeq \big( \HH(k[G]) \oplus_{\HH(k[H])} \HH(k[G']) \big) \oplus N\HH(T_{k[H]}(M)).\]
We now aim to describe the error term $N\HH(T_{k[H]}(M))$ and its homotopy $\T$-orbits in more detail.
First, since $\HH$ is a symmetric monoidal functor, we have an equivalence
\[ \HH(T_{k[H]}(M)) \simeq \HH(T_k(M') \otimes_k k[H]) \simeq \HH(T_k(M')) \otimes_k \HH(k[H])\]
and the former term is given as in Example~\ref{Ex:Burghelea}. Therefore, in total we obtain an equivalence
\[ N\HH(T_{k[H]}(M)) \simeq \bigoplus\limits_{\omega \in \Omega_0(S)} (S^1/C(\omega))_+ \otimes_\SS \HH(k[H]).\]
Applying homotopy $\T$-orbits we obtain by Lemma~\ref{lemma:convenience}
\[ N\HC(T_{k[H]}(M)) \simeq \bigoplus\limits_{\omega \in \Omega_0(S)} \HH(k[H])_{hC(\omega)},\]
generalizing the case of Example~\ref{Ex:Burghelea} where $H= \{e\}$ is the trivial group, in which case we have $\HH(k)_{hC(\omega)} \simeq k \otimes BC(\omega)_+$.
To describe the homotopy $C(\omega)$-orbits appearing above more concretely, we recall that for any discrete group $G$, we have 
\[\HH(k[G]) \simeq k \otimes L(BG)_+ \simeq \bigoplus\limits_{\langle g \rangle \in G^c/G} k \otimes (BC_g(G))_+ \]
where $G^c/G$ denotes the set of conjugacy classes of $G$,  $C_g(G)$ denotes the centralizer of $g$ in $G$ and $L(-)$ is the free loop space functor. The latter equivalence is $\T$-equivariant when the right hand side is equipped with a diagonal $\T$-action whose action on $BC_g(G)$ is given as follows, see e.g.\ \cite{Burghelea}: The inclusion $\Z/\mathrm{ord}(g)\Z \to C_g(G)$ determined by $g$ induces a fibre sequence
\[ BC_g(G) \lto B(C_g(G)/(\Z/\mathrm{ord}(g)\Z) \lto B^2(\Z/\mathrm{ord}(g)\Z) \]
which can be pulled back along the canonical map $B\T \to B^2(\Z/\mathrm{ord}(g)\Z)$ (which  is an equivalence if the order of $g$ is infinite). The resulting fibre sequence then describes the relevant $\T$-action on $BC_g(G)$.
Consequently, we find 
\[ N\HC(T_{k[H]}(M)) \simeq \bigoplus\limits_{\omega \in \Omega_0(S)} \bigoplus\limits_{\langle h \rangle \in H^c/H} k \otimes [(BC_h(H))_{hC(\omega)}]_+ \]
so we shall content ourselves with describing $(BC_h(H))_{hC_n}$ for natural numbers $n\geq 1$.
From the above, we find that
\[ (BC_h(H))_{hC_n} \simeq B(C_h(H)/(\Z/\mathrm{ord}(h)\Z)) \times_{B^2(\Z/\mathrm{ord}(h)\Z)} BC_n \]
where the pullback is formed over the canonical composite $BC_n \to B\T \to B^2(\Z/\mathrm{ord}(h)\Z)$.
The homology of these spaces with coefficients in $k$ can then in principle be described via Serre spectral sequences. For instance, this is easy to work out  in the case $H=\Z$, where one finds the following:
\[ \HH(k[\Z])_{hC(\omega)} \simeq \bigoplus\limits_{\ell\in \Z} (BC_\ell \times_{B\T} BC(\omega))_+ \otimes k \]
where our convention here is that $BC_\ell = \fib(\ell \colon B\T \to B\T)$ also for $\ell = 0$. The homology of the spaces $BC_\ell \times_{B\T} BC(\omega)$ is then immediate from the observation that the map to $BC(\omega)$ is a $\T$-bundle classified by the element $\ell x \in H^2(BC(\omega);\Z)$, and the Serre spectral sequence. 
We finally note that $\HH(k[\Z])$ is 1-coconnective, so that $N\HP(T_{k[\Z]}(M))$ consequently agrees with $N\HC(T_{k[\Z]}(M))$ in sufficiently high degrees and is 2-periodic.
\end{ex}

\begin{ex}
As a concrete example of the above, we briefly discuss the case of $\mathrm{SL}_2(\Z)$, which is isomorphic to the amalgamated product $C_4 \star_{C_2} C_6$. As worked out in Example~\ref{ex:amalgamation-with-central-subgroup}, we then obtain
\[ \HH(k[\mathrm{SL}_2(\Z)]) \simeq \big( \HH(k[C_4]) \oplus_{\HH(k[C_2])} \HH(k[C_6])\big)\oplus \bigoplus\limits_{\omega \in \Omega_0(S)} (S^1/C(\omega))_+ \otimes_\SS \HH(k[C_2])\]
where $S$ is a set with two elements. In order to evaluate cyclic homology concretely, we recall that we have
\[ \HC(k[\mathrm{SL}_2(\Z)]) \simeq \big( \HC(k[C_4]) \oplus_{\HC(k[C_2])} \HC(k[C_6])\big)\oplus \bigoplus\limits_{\omega \in \Omega_0(S)}  \HH(k[C_2])_{hC(\omega)}\]
and that as described in Example~\ref{ex:amalgamation-with-central-subgroup} we have
\[ \HH(k[C_2])_{hC(\omega)} \simeq \big[(BC_2 \times BC(\omega))_+ \otimes k\big] \oplus \big[(BC_2 \times_{B\T} BC(\omega))_+ \otimes k\big]\]
where the final pullback is along the two canonical maps $BC_2 \to B\T \leftarrow BC(\omega)$.
\end{ex}

Finally, for the reader's convenience, we record the following well-known lemma which we have used above.
\begin{lemma}\label{lemma:convenience}
There is a canonical equivalence between the following two functors $\Mod(k)^{B\T} \to \Mod(k)$:
\[ ((S^1/C_n)_+ \otimes -)_{h\T}  \simeq (-)_{hC_n}.\]
\end{lemma}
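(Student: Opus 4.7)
The key observation is that $S^{1}$ with its standard $\T$-action (by left translation on $\T = S^{1}$) is $\T$ as a $\T$-space, and hence $S^{1}/C_{n} \cong \T/C_{n}$ as $\T$-spaces. The plan is therefore to reduce the statement to the standard fact that, for a subgroup inclusion $H \hookrightarrow G$, one has a natural equivalence $((G/H)_{+} \otimes X)_{hG} \simeq X_{hH}$ of functors $\Mod(k)^{BG} \to \Mod(k)$, and to apply this with $G = \T$ and $H = C_{n}$.

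To recall this fact, the first step is the shearing equivalence
\[
(\T/C_{n})_{+} \otimes X \; \simeq \; \T_{+} \otimes_{C_{n}} \mathrm{res}^{\T}_{C_{n}}(X)
\]
in $\Mod(k)^{B\T}$, where the left side carries the diagonal $\T$-action and on the right side $\T$ acts only on the first factor by left translation. The equivalence is induced by the map $(g,x) \mapsto (gC_{n}, gx)$, which intertwines the $C_{n}$-action $(gh, h^{-1}x)$ on the left with the module relation over $C_{n}$ on the right and becomes an equivalence after passing to the quotient by $C_{n}$.

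The second step is to identify $(\T_{+} \otimes_{C_{n}} (-))_{h\T}$ with $(-)_{hC_{n}}$ as functors $\Mod(k)^{BC_{n}} \to \Mod(k)$. This follows by uniqueness of adjoints: both functors are left adjoint to the trivial $C_{n}$-action functor $\Mod(k) \to \Mod(k)^{BC_{n}}$, since the right adjoint of $\T_{+} \otimes_{C_{n}} (-)$ is restriction $\mathrm{res}^{\T}_{C_{n}}$, the right adjoint of $(-)_{h\T}$ is the trivial $\T$-action functor, and their composite sends a $k$-module to its underlying object with trivial $C_{n}$-action.

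Putting these two steps together yields the required chain of natural equivalences
\[
\bigl((S^{1}/C_{n})_{+} \otimes X\bigr)_{h\T}
  \simeq \bigl(\T_{+} \otimes_{C_{n}} \mathrm{res}^{\T}_{C_{n}}(X)\bigr)_{h\T}
  \simeq \bigl(\mathrm{res}^{\T}_{C_{n}}(X)\bigr)_{hC_{n}}
  = X_{hC_{n}},
\]
natural in $X \in \Mod(k)^{B\T}$. There is no serious obstacle; the only point to be careful about is that each equivalence is produced either by a canonical shearing map or by the uniqueness of adjoints, so naturality in $X$ is automatic.
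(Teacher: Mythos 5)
Your proof is correct and is essentially the paper's argument in different clothing: your shearing equivalence is the projection formula $j_!(j^*X)\simeq j_!(\ast)\otimes X$ for $j\colon BC_n\to B\T$, your identification $S^1/C_n\cong \T/C_n$ replaces the paper's Beck--Chevalley step for the pullback square defining $BC_n$, and your uniqueness-of-adjoints step is the identity $r_!j_!\simeq s_!$ for composed left Kan extensions. No gaps; the only informality is writing the shearing map pointwise, which is made precise exactly by the projection formula you are implicitly using.
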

\begin{proof}
Recall that for a map of spaces $f\colon X \to Y$ and any cocomplete $\infty$-category $\cC$, the pullback functor $f^*\colon \Fun(Y,\cC) \to \Fun(X,\cC)$ has a left adjoint $f_!$ given by left Kan extension. 
Let us then consider the following pullback diagram of spaces
\[\begin{tikzcd}
	BC_n \ar[r,"s"] \ar[d,"j"] & \star \ar[d,"i"] \\
	B\T \ar[r,"n"] & B\T
\end{tikzcd}\]
where $n$ denotes the multiplication by $n$ map. Let us also denote by $r \colon B\T \to \star$ the unique map. With these notations, we have $(-)_{h\T} = r_!$ and $(-)_{hC_n} = s_!$. We now note that $S^1$ as a $\T$-space via left multiplication is given by $i_!(\ast)$ where $\ast \in \Fun(\star,\Spc) \simeq \Spc$ is a terminal object. Likewise, the $\T$-space $S^1/C_n$ is given by $n^*i_!(\ast)$. Since the above diagram is a pullback, the canonical Beck--Chevalley map $n^*i_!(\ast) \to j_!(s^*(\ast)) = j_!(\ast)$ is an equivalence. 
We then calculate for $M \in \Mod(k)^{B\T}$
\[ r_!(n^*i_!(\ast) \otimes M) \simeq r_!(j_!(\ast) \otimes M) \simeq r_!(j_!(\ast \otimes j^*(M)) = s_!(j^*(M))\]
where the second equivalence is the projection formula for $(j_!,j^*)$.
The left and right most terms are the functors under investigation as we have explained above, so the lemma is proven.
\end{proof}

\subsection{Generalized coordinate axes}
We begin with a discussion of the coordinate axes over the sphere spectrum. In more detail, we consider the following pullback square of $\E_\infty$-rings.
\[\begin{tikzcd}	
	\SS[x,y]/(xy) \ar[r] \ar[d] & \SS[y] \ar[d] \\
	\SS[x] \ar[r] & \SS
\end{tikzcd}\]

\begin{prop}\label{prop:coordinate-axes-sphere}
The $\wtimes{}{}$-ring associated to the above pullback diagram is given by $\SS[t]$ where $|t|=2$.
\end{prop}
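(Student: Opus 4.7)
The plan is to apply Theorem~\ref{ThmB}. The pullback square realises the Milnor context $(\SS[x], \SS[y], \SS)$, with $\SS$ pointed as a bimodule via the augmentations $x, y \mapsto 0$. I would take $A_{0} = \SS[x, y]$ as tensorizer, equipped with the $\E_{\infty}$-
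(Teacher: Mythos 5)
Your setup is the same as the paper's: the Milnor context is $(\SS[x],\SS[y],\SS)$ and the tensorizer is $A_0=\SS[x,y]$, justified by the equivalence $\SS[x]\otimes_{\SS[x,y]}\SS[y]\simeq\SS$. Theorem~\ref{ThmB} then identifies the $\wtimes{}{}$-ring with the pushout $\SS[x]\amalg_{\SS[x,y]}\SS[y]$ in $\Alg(\SS)$. But your proposal stops there, and the substantive part of the proposition --- the identification of that pushout with $\SS[t]$, $|t|=2$ --- is entirely missing. A pushout of $\E_1$-ring spectra is not something you can read off degreewise, and in particular the appearance of a degree-$2$ generator is not visible from the input data without further work.

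The missing computation goes as follows. Since $\SS[-]\colon\Mon_{\E_1}(\Spc)\to\Alg_{\E_1}(\Sp)$ is a left adjoint, the pushout is $\SS[\N\amalg_{\N\times\N}\N]$. One checks that the monoid pushout is connected (its $\pi_0$ is the pushout of $\N\leftarrow\N\times\N\to\N$ in discrete monoids, which is trivial), hence grouplike, so group completion lets you replace the span by $\Z\leftarrow\Z\times\Z\to\Z$ in $\Grp_{\E_1}(\Spc)$. Under the equivalence $B\colon\Grp_{\E_1}(\Spc)\simeq\Spc_*^{\geq1}$ the pushout becomes $\Omega$ of the pushout of $S^1\leftarrow T^2\to S^1$, which is the join $S^1\star S^1\simeq S^3$. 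Hence the $\wtimes{}{}$-ring is $\SS[\Omega S^3]=\SS[t]$ with $|t|=2$. Without this (or an equivalent) argument your proof does not establish the claim.
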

\begin{proof}
We consider the span $\SS[x] \leftarrow \SS[x,y] \to \SS[y]$ and note the equivalence
\[ \SS[x] \otimes_{\SS[x,y]} \SS[y] \simeq \SS.\]
This shows that $\SS[x,y]$ is a tensorizer for the Milnor context $(\SS[x],\SS[y],\SS)$.
By Theorem~\ref{ThmB} we therefore obtain an equivalence
\[ \SS[x] \wtimes{\SS[x,y]/(xy)}{\SS} \SS[y] \simeq \SS[x] \amalg_{\SS[x,y]} \SS[y].\]
Using the equivalence $\SS[x,y] \simeq \SS[x] \otimes \SS[y]$, we deduce that there is an $\E_1$-equivalence $\SS[x,y] \to \SS[x,y]$ sending $x$ to $x+1$ and $y$ to $y+1$. After composing with this equivalence, the map $\SS[x,y] \to \SS[x]$ becomes the map induced by the projection $p_1\colon \N \times \N \to \N$, similarly for the map $\SS[x,y] \to \SS[y]$.\footnote{This step was omitted in the previous version. Thanks to Andrei Konovalov for pointing this out.} Hence, the pushout on the right hand side is given by $\SS[\N \amalg_{\N\times \N} \N]$, since the functor 
\[\SS[-] \colon \Mon_{\E_1}(\Spc) \to \Alg_{\E_1}(\Sp)\] 
is a left adjoint and hence commutes with colimits. In general, pushouts in $\Mon_{\E_1}(\Spc)$ are not so easy to compute, but in our case, we observe that the pushout is connected: Its $\pi_0$ coincides with the pushout of $\N \leftarrow \N \times \N \to \N$ in $\Mon(\Set)$, which is easily seen to be trivial. Since the group completion functor $\Mon_{\E_1}(\Spc) \to \Grp_{\E_1}(\Spc)$ is a left adjoint, it commutes with pushouts, so we may equivalently calculate the pushout of $\Z \leftarrow \Z \times \Z \to \Z$ in $\Grp_{\E_1}(\Spc)$. Now, the adjunction
\[ B \colon \Grp_{\E_1}(\Spc) \rightleftarrows \Spc_*^{\geq 1}\colon \Omega \]
is an adjoint equivalence. Therefore the pushout we aim to calculate is given by the loop space of the pushout of the span $S^1 \leftarrow T^2 \to S^1$. This pushout is well-known to be the join $S^1 \star S^1$ which is $S^3$. In total, we find the equivalence 
\[ \SS[x] \amalg_{\SS[x,y]} \SS[y] \simeq \SS[\Omega S^3] = \SS[t] \quad \text{ with } |t|=2\]
as claimed.
\end{proof}

\begin{cor}\label{cor:coordinate-axes}
Let $C$ be any ring spectrum and let $|t|=2$. Then there is a motivic pullback square 
\[\begin{tikzcd}
	C[x,y]/(xy) \ar[r] \ar[d] & C[y] \ar[d] \\
	C[x] \ar[r] & C[t].
\end{tikzcd}\]
\end{cor}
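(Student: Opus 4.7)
The plan is to deduce the corollary directly from Proposition~\ref{prop:coordinate-axes-sphere} by base change along $\SS \to C$. Since $\SS$ is $\E_\infty$, Proposition~\ref{prop:tensor-product} applies with $k = \SS$ and $R = C$: tensoring the Milnor context $(\SS[x], \SS[y], \SS)$ with the $\SS$-algebra $C$ produces the Milnor context $(C[x], C[y], C)$, and yields canonical equivalences identifying the $\boxtimes$- and $\odot$-rings of the latter with the base changes of those of the former. Moreover, inspection of the proof of Proposition~\ref{prop:tensor-product} shows that the entire motivic pullback square of Theorem~\ref{thm:GLT} base changes along $\SS \to C$.

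For the $\boxtimes$-ring, Remark~\ref{rem:comparison-LT1} identifies it with the underlying pullback, namely $\SS[x] \times_{\SS} \SS[y] = \SS[x,y]/(xy)$, whose base change is $C[x] \times_{C} C[y] = C[x,y]/(xy)$. For the $\odot$-ring, Proposition~\ref{prop:coordinate-axes-sphere} identifies the $\SS$-side with $\SS[t] = \bigoplus_{n \geq 0} \Sigma^{2n}\SS$ for $|t|=2$, and its base change is therefore $\SS[t] \otimes_\SS C \simeq \bigoplus_{n \geq 0} \Sigma^{2n} C = C[t]$ with $|t| = 2$.

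Substituting these identifications into the motivic pullback square provided by Theorem~\ref{thm:GLT} applied to the Milnor context $(C[x], C[y], C)$ yields the desired square. There is no real obstacle: the proof is essentially a direct assembly of Proposition~\ref{prop:coordinate-axes-sphere}, Proposition~\ref{prop:tensor-product}, and Theorem~\ref{thm:GLT}.
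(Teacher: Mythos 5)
Your proposal is correct and is essentially identical to the paper's own proof, which simply cites Proposition~\ref{prop:coordinate-axes-sphere} together with the base-change/tensor compatibility of Proposition~\ref{prop:tensor-product} (applied with $k=\SS$ and $R=C$). The extra details you spell out (identifying the base-changed Milnor context, the $\boxtimes$-ring, and $\SS[t]\otimes_\SS C\simeq C[t]$) are exactly what the paper leaves implicit.
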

\begin{proof}
This follows from Propositions~\ref{prop:coordinate-axes-sphere} and \ref{prop:tensor-product}.
\end{proof}

\begin{ex}
Let us discuss Corollary~\ref{cor:coordinate-axes} in a number of examples.
First, we remind the reader of the rational calculation of Proposition~\ref{eq:K-theory-of-homotopical-polynomial-algebras}. By the above, it provides a description of the $K$-theory of the coordinate axes over a commutative regular Noetherian $\Q$-algebra in terms of differential forms and thereby reproduces computations of Geller, Reid, and Weibel \cite{GRW}.

For commutative regular Noetherian $\F_p$-algebras, Hesselholt calculated the $K$-theory of the coordinate axes in \cite{Hesselholt} in terms of big de Rham--Witt forms. Later, Speirs \cite{Speirs2} gave a new proof in the case of perfect fields $k$ of characteristic $p$ using the Nikolaus--Scholze formula for TC. Concretely, one obtains for $n\geq 0$ that
\[ K_{2n+1}(k[t],k) \cong \mathbb{W}_n(k) \quad \text{ and } \quad K_{2n}(k[t],k) = 0 \]
where $\mathbb{W}(k)$ again denotes the ring of big Witt vectors.
We mention also that Bay\i nd\i r and Moulinos calculated $K(\THH(\F_p))$ in \cite{BM}. Note that $\THH(\F_p) \simeq \F_p[t]$ by B\"okstedt periodicity. Corollary~\ref{cor:coordinate-axes} is then a conceptual explanation for the (at the time) purely computational fact that the $p$-adic $K$-groups of $\F_p[x,y]/(xy)$ and that of $\F_p[t]$ (recall that $|t|=2$) agree up to a shift. It is tempting to think that the $K$-theory pullback obtained via Corollary~\ref{cor:coordinate-axes} is multiplicative, which would show that the multiplication on $K_*(C[x,y]/(xy)),C)$ is trivial, but see Proposition~\ref{prop:multiplicative-structures} and Example~\ref{ex:coordinate-axes-multiplication}.

Finally, Corollary~\ref{cor:coordinate-axes} together with the calculations of Angeltveit and Gerhardt \cite{AG} give the following result for $n\geq 0$:
\[ K_{2n+1}(\Z[t],\Z) \cong \Z \quad \text{ and } \quad |K_{2n+2}(\Z[t],\Z)| = (n!)^2.\]
We note here that the pattern suggested by this and the case $|t|=1$, i.e.\ that every second relative $K$-group is free of rank $1$ does not persist to $|t|>2$ as follows already from Waldhausen \cite[Prop.\ 1.1]{Waldhausen} or \cite[Lemma 2.4]{LT} or Proposition~\ref{eq:K-theory-of-homotopical-polynomial-algebras}. 
\end{ex}

Finally, let us consider a commutative discrete ring $R$, equipped with elements $x$ and $y$ so that $R$ is an $\SS[x,y]$-algebra in the evident way. Let us write $R/x$ for the cofibre of multiplication by $x$ on $R$ and $R/x,y$ for the iterated cofibre $(R/x)/y$. For the following, we note that Proposition~\ref{prop:coordinate-axes-sphere} specifies an $\SS[x,y]$-algebra structure on $\SS[t]$.
\begin{lemma}\label{cor:generalized-coordinate-axes}
For a commutative ring $R$ with non-zero divisors $x$ and $y$, the $\wtimes{}{}$-ring for the pullback square
\[
\begin{tikzcd}
	R/xy \ar[r] \ar[d] & R/x \ar[d] \\
	R/y \ar[r] & R / x,y
\end{tikzcd}
\]
is given by $\SS[t] \otimes_{\SS[x,y]} R$.
If $(x,y)$ forms a regular sequence in $R$, then there is a canonical isomorphism $\pi_*(\SS[t] \otimes_{\SS[x,y]} R) \cong (R/x,y)[t]$.
\end{lemma}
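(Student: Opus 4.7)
The plan is to apply Theorem~\ref{ThmB} to identify the $\odot$-ring as a concrete pushout, and then to compute its homotopy via an explicit small resolution under the regularity assumption.

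First, I observe that $R$ is a tensorizer for the Milnor context $(R/y, R/x, R/x,y)$ associated to the given pullback square: the iterated cofibre $R/x,y$ is by construction the relative tensor product $R/x \otimes_R R/y$, which is exactly what the tensorizer condition asks for. Theorem~\ref{ThmB} therefore identifies the $\odot$-ring with the pushout $R/x \amalg_R R/y$ formed in $\Alg(\SS)$.

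Next, I rewrite this pushout by base change along the $\SS[x,y]$-algebra structure map $\SS[x,y] \to R$. We have equivalences $R/x \simeq R \otimes_{\SS[x,y]} \SS[y]$ and $R/y \simeq R \otimes_{\SS[x,y]} \SS[x]$ of $R$-algebras, and the functor $R \otimes_{\SS[x,y]} (-) \colon \Alg(\SS[x,y]) \to \Alg(R)$ preserves pushouts as it is a left adjoint. Combined with Proposition~\ref{prop:coordinate-axes-sphere}, this yields
\[
R/x \amalg_R R/y \simeq R \otimes_{\SS[x,y]} \bigl(\SS[y] \amalg_{\SS[x,y]} \SS[x]\bigr) \simeq \SS[t] \otimes_{\SS[x,y]} R,
\]
which proves the first claim.

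For the second claim, assume $(x,y)$ is a regular sequence, so $R/x,y$ is discrete. By Proposition~\ref{prop:underlying-bimodule}, the underlying spectrum of the $\odot$-ring is $R/y \otimes_{R/xy} R/x$. I resolve $R/y$ as an $R/xy$-module by the periodic complex
\[
\cdots \xrightarrow{\,x\,} R/xy \xrightarrow{\,y\,} R/xy \xrightarrow{\,x\,} R/xy \xrightarrow{\,y\,} R/xy \twoheadrightarrow R/y,
\]
whose exactness rests on the identifications $\ker(x\cdot\colon R/xy \to R/xy) \cong R/x$ and $\ker(y\cdot\colon R/xy \to R/xy) \cong R/y$, both consequences of $x$ and $y$ being non-zero divisors in $R$. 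Applying $R/x \otimes_{R/xy} (-)$ turns the multiplications by $x$ into zero and leaves the multiplications by $y$ intact, giving
\[
\cdots \xrightarrow{\,0\,} R/x \xrightarrow{\,y\,} R/x \xrightarrow{\,0\,} R/x \xrightarrow{\,y\,} R/x.
\]
By regularity, $y$ acts as a non-zero divisor on $R/x$, so the homology of this complex equals $R/(x,y)$ in every non-negative even degree and vanishes in odd and negative degrees, yielding the additive identification $\pi_*(\SS[t] \otimes_{\SS[x,y]} R) \cong (R/x,y)[t]$ with $|t|=2$.

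Finally, the multiplicative structure is pinned down by the $\SS[x,y]$-algebra map $\SS[t] \to \SS[t] \otimes_{\SS[x,y]} R$: the image of the generator $t \in \pi_2\SS[t]$ provides an $R/(x,y)$-module generator of $\pi_2$ of the $\odot$-ring, and its powers generate each higher even-degree homotopy group, as one reads directly off the resolution. The main technical step is the verification that the displayed periodic complex is actually a resolution, which reduces to elementary intersection-of-ideals computations in $R/xy$ and uses nothing beyond the non-zero divisor hypothesis on $x$ and $y$.
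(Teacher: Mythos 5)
Your proof is correct, and the two halves relate to the paper's argument differently. For the first claim you re-derive the base-change compatibility by hand: you check that $R$ is a tensorizer for $(R/y,R/x,R/x,y)$, invoke Theorem~\ref{ThmB} to get the pushout $R/x\amalg_R R/y$, and then commute the pushout past $R\otimes_{\SS[x,y]}(-)$ using $\SS[y]\otimes_{\SS[x,y]}R\simeq R/x$ and Proposition~\ref{prop:coordinate-axes-sphere}. The paper simply cites Proposition~\ref{prop:base-change} with $k\to\ell$ being $\SS[x,y]\to R$; your unpacking is essentially the proof of that proposition in this case, so the content is the same. For the second claim you genuinely diverge: the paper rewrites $\SS[t]\otimes_{\SS[x,y]}R\simeq \Z[t]\otimes_{\Z[x,y]}R$ and runs the multiplicative Tor spectral sequence over $\Z[x,y]$, which collapses to the $0$-line by regularity of $(x,y)$ on $R$; you instead use Proposition~\ref{prop:underlying-bimodule} to identify the underlying spectrum with $R/y\otimes_{R/xy}R/x$ and compute it with the standard $2$-periodic free resolution over the hypersurface $R/xy$. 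Your exactness checks (kernels of $x$ and $y$ on $R/xy$) are right and only use the non-zero-divisor hypothesis, and the collapse after tensoring uses regularity exactly where the paper does. Your route is more elementary and makes the additive answer very explicit; the paper's buys the multiplicative statement for free from the edge homomorphism.

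That last point is the one soft spot. The phrase ``as one reads directly off the resolution'' does not by itself justify that the powers of $t$ generate: the resolution over $R/xy$ computes the Tor groups but carries no information about the ring map $\SS[t]\to\SS[t]\otimes_{\SS[x,y]}R$, and one cannot substitute the Tor-algebra product of $R/y\otimes_{R/xy}R/x$ for the $\odot$-ring product --- Example~\ref{ex:coordinate-axes-multiplication} shows these genuinely differ (the animated tensor product carries divided powers, the $\odot$-ring does not). To close this, either observe that your periodic resolution is the base change of the one for $R_0=\Z[x,y]$ along $\Z[x,y]/(xy)\to R/xy$, so by naturality the image of $t^n$ (a generator of $\pi_{2n}\SS[t]\cong\Tor_{2n}^{\Z[x,y]/(xy)}(\Z[x],\Z[y])$) maps to a generator of $\Tor_{2n}^{R/xy}(R/x,R/y)\cong R/(x,y)$; or fall back on the multiplicative edge map of the Tor spectral sequence over $\SS[x,y]$, which is the paper's argument. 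With either sentence added, the proof is complete.
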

\begin{proof}
The first claim follows from the general base-change result in Proposition~\ref{prop:base-change} with $k \to \ell$ being $\SS[x,y] \to R$. To see the second, we note the equivalence
\[ \SS[t]\otimes_{\SS[x,y]} R \simeq \SS[t] \otimes_{\SS[x,y]} \Z[x,y] \otimes_{\Z[x,y]} R \simeq \Z[t] \otimes_{\Z[x,y]} R\]
and consider the Tor spectral sequence
\[ 
\Tor_r^{\Z[x,y]}(\pi_*(\Z[t]), R)_{s} \Longrightarrow \pi_{r+s}(\Z[t] \otimes_{\Z[x,y]} R) \cong \pi_{r+s}(\SS[t] \otimes_{\SS[x,y]} R).
\]
The assumption that $(x,y)$ forms a regular sequence implies that the spectral sequence lives on the $r=0$ axis. Now, the homotopy ring we intend to calculate is an $(R/x,y)$-algebra and it has a filtration whose associated graded is a free algebra on a degree 2 generator. We can lift such a generator to an element in $\pi_2(\SS[t]\otimes_{\SS[x,y]} R)$ and then find that the induced map $(R/x,y)[t] \to \pi_*(\SS[t]\otimes_{\SS[x,y]} R)$ is an equivalence.
\end{proof}

\begin{ex}\label{ex:arithmetic-coordinate-axes}
Let $R= \Z[u]$ with the elements $u$ and $u+p$. Then we obtain the Milnor square for the ``arithmetic coordinate axes''
\[\begin{tikzcd}
	\Z \times_{\F_p} \Z \ar[r] \ar[d] & \Z \ar[d] \\
	\Z \ar[r] & \F_p
\end{tikzcd}\]
and the $\wtimes{}{}$-ring is given by $\Z\sslash p$, i.e.\ the $\E_1$-algebra obtained from $\Z$ by freely setting $p=0$ in $\Alg(\Z)$. We explain in Remark~\ref{rem:formality} below, that unless $p=2$, this ring spectrum is not formal, i.e.\ not equivalent to $\F_p[t]$.
\end{ex}

\begin{ex}\label{ex:Rim-square}
Let $R= \Z[u]$ equipped with the elements $u-1$ and $1+u+\dots+u^{p-1}$. Then we obtain the classical Rim square
	\[\begin{tikzcd}
		\Z[C_p] \ar[r] \ar[d] & \Z[\zeta_p] \ar[d] \\
		\Z \ar[r] & \F_p
	\end{tikzcd}\]
and the $\wtimes{}{}$-ring is given by $\Z[\zeta_p]\sslash (\zeta_p-1)$. Krause and Nikolaus have independently shown (unpublished) that there exists a canonical motivic pullback square
\[\begin{tikzcd}
	\Z[C_p] \ar[r] \ar[d] & \Z[\zeta_p] \ar[d] \\
	\Z \ar[r] & \tau_{\geq 0}(\Z^{tC_p})
\end{tikzcd}\]
using the stable module category of $C_p$ relative to $\Z$. Comparing their construction with ours, one finds that $\Z[\zeta_p] \sslash (\zeta_p-1)$ is in fact equivalent to $\tau_{\geq 0}(\Z^{tC_p})$ (as an algebra over $\Z$) and that theirs and our motivic pullback squares agree via this equivalence. It looks conceivable to us that the equivalence $\Z[\zeta_p]\sslash (\zeta_p-1) \simeq \tau_{\geq0}(\Z^{tC_p})$ can also be proven by inspecting a dga model for endomorphism rings in stable module categories as in \cite[Section 6]{Schwede}, and comparing such models to the canonical dga model of $\Z[\zeta_p]\sslash (\zeta_p-1)$. The comparison to the construction of Krause--Nikolaus, however, circumvents any such model dependent arguments. Finally, we note that as an $\E_1$-algebra over $\SS$, there is an equivalence $\tau_{\geq 0}(\Z^{tC_p}) \simeq \F_p[t]$ as follows for instance from the facts that $\tau_{\geq 0}(\Z^{tC_p})$ is $\E_2$ and $\F_p$ is the initial $\E_2$-algebra with $p=0$ by the Hopkins--Mahowald theorem, see e.g.\ \cite[Theorem 5.1]{ACB}, and that $\pi_*(\tau_{\geq 0}(\Z^{tC_p})) \cong \F_p[t]$. 
In total, one obtains a motivic pullback square
\[ \begin{tikzcd}
	\Z[C_p] \ar[r] \ar[d] & \Z[\zeta_p] \ar[d] \\
	\Z \ar[r] & \F_p[t] 
\end{tikzcd}\]
giving some new insight into the $p$-adic $K$-groups of $K(\Z[C_p])$ as $K(\F_p[t])$ is fully understood, see the discussion following Corollary~\ref{cor:coordinate-axes}. 
\end{ex}

\begin{rem}\label{rem:formality}
In examples \ref{ex:arithmetic-coordinate-axes} and \ref{ex:Rim-square} above, the elements $x$ and $y$ form a regular sequence. In particular, the homotopy ring of $\wtimes{}{}$ is isomorphic to $\F_p[t]$ in both cases. As indicated in Example~\ref{ex:Rim-square}, the ring $\Z[\zeta_p]\sslash (\zeta_p-1)$ is indeed formal (over the sphere), i.e.\ there is an equivalence of $\E_1$-ring spectra $\Z[\zeta_p] \sslash (\zeta_p-1) \simeq \F_p[t]$. For $p=2$, this in particular gives an equivalence $\Z\sslash 2 \simeq \F_2[t]$. However, for odd primes $p$, $\Z\sslash p$ is \emph{not} equivalent to $\F_p[t]$ as a ring spectrum. Indeed, Davis--Frank--Patchkoria \cite{DFP} calculated that $p$ is non-zero on $\HH(\Z\sslash p)$, in fact they show $\HH_{2n}(\Z\sslash p) \cong \Z/p^2$ for $n>0$. Consequently, by the equivalence
\[ 
\THH(\Z\sslash p) \otimes_{\THH(\Z)} \Z \simeq \HH(\Z\sslash p) 
\]
the prime $p$ must also be non-trivial on $\THH(\Z\sslash p)$. As $p$ is trivial on $\THH(\F_p[t])$, we conclude that $\Z\sslash p$ and $\F_p[t]$ cannot be equivalent as ring spectra.

It would be interesting to find a general reasoning why for certain regular sequences $(x,y)$ in $R$, the ring spectrum $\SS[t] \otimes_{\SS[x,y]} R$ is formal, i.e.\ equivalent as ring spectrum to $(R/x,y)[t]$. 
\end{rem}

\begin{ex}\label{ex:Z/p2}
Consider $R = \Z$ with the elements $x=y=p$. Then we obtain the pullback square
\[\begin{tikzcd}
	\Z/p^2 \ar[r] \ar[d] & \F_p \ar[d] \\
	\F_p \ar[r] & \F_p \otimes_\Z \F_p
\end{tikzcd}\]
where the lower tensor product is a derived tensor product. The $\wtimes{}{}$-ring  in this case is then given by $\F_p \amalg_\Z \F_p$ and has homotopy ring $\F_p[t,\epsilon]$ with $|t|=2$ and $|\epsilon|=1$. Note that by  recent work of Antieau--Krause--Nikolaus, see \cite{AKN}, much is known about the $p$-adic $K$-theory of $\Z/p^2$ -- for instance, in high enough and even degrees, it vanishes. As a result, one understands much about the $p$-adic $K$-theory of $\F_p \amalg_\Z \F_p$ -- for instance, it vanishes in high enough and odd degrees.
\end{ex}

\bibliographystyle{amsalpha}
\bibliography{cochains}

\newcommand{\etalchar}[1]{$^{#1}$}
\providecommand{\bysame}{\leavevmode\hbox to3em{\hrulefill}\thinspace}
\providecommand{\MR}{\relax\ifhmode\unskip\space\fi MR }
\providecommand{\MRhref}[2]{%
  \href{http://www.ams.org/mathscinet-getitem?mr=#1}{#2}
}
\providecommand{\href}[2]{#2}
\begin{thebibliography}{BCKW19}

\bibitem[ACB19]{ACB}
O.~Antol\'{\i}n-Camarena and T.~Barthel, \emph{A simple universal property of
  {T}hom ring spectra}, J. Topol. \textbf{12} (2019), no.~1, 56--78.

\bibitem[AG11]{AG}
V.~Angeltveit and T.~Gerhardt, \emph{On the algebraic {$K$}-theory of the
  coordinate axes over the integers}, Homology Homotopy Appl. \textbf{13}
  (2011), no.~2, 103--111.

\bibitem[AGH09]{AGHess}
V.~Angeltveit, T.~Gerhardt, and L.~Hesselholt, \emph{On the {$K$}-theory of
  truncated polynomial algebras over the integers}, J. Topol. \textbf{2}
  (2009), no.~2, 277--294.

\bibitem[AGH19]{AGH}
B.~Antieau, D.~Gepner, and J.~Heller, \emph{{$K$}-theoretic obstructions to
  bounded {$t$}-structures}, Invent. Math. \textbf{216} (2019), no.~1,
  241--300.

\bibitem[AKN22]{AKN}
B.~Antieau, A.~Krause, and T.~Nikolaus, \emph{On the {K}-theory of
  $\mathbb{Z}/p^n$ -- announcement}, arXiv:2204.03420, 2022.

\bibitem[BAC92]{BACH}
BACH, \emph{Cyclic homology of hypersurfaces}, J. Pure Appl. Algebra
  \textbf{83} (1992), no.~3, 205--218.

\bibitem[Ban17]{Bandklayder}
L.~Bandklayder, \emph{{Stable splitting of mapping spaces via nonabelian
  Poincar\'e duality}}, arXiv:1705.03090 (2017).

\bibitem[Bar15]{Barwick}
C.~Barwick, \emph{On exact {$\infty$}-categories and the theorem of the heart},
  Compos. Math. \textbf{151} (2015), no.~11, 2160--2186.

\bibitem[Bas68]{Bass}
H.~Bass, \emph{Algebraic {$K$}-theory}, W. A. Benjamin, Inc., New
  York-Amsterdam, 1968.

\bibitem[BCKW19]{BCKW}
U.~Bunke, D.-C. Cisinski, D.~Kasprowski, and C.~Winges, \emph{Controlled
  objects in left-exact {$\infty$}-categories and the novikov conjecture},
  arXiv:1911.02338; to appear in Bull.\ Soc.\ Math.\ Fr. (2019).

\bibitem[BCM20]{BCM}
B.~Bhatt, D.~Clausen, and A.~Mathew, \emph{Remarks on {$K (1)$}-local
  {$K$}-theory}, Selecta Math. (N.S.) \textbf{26} (2020), no.~3, Paper No. 39,
  16.

\bibitem[BFL14]{Lueck-lattices}
A.~Bartels, F.~T. Farrell, and W.~L\"{u}ck, \emph{The {F}arrell-{J}ones
  conjecture for cocompact lattices in virtually connected {L}ie groups}, J.
  Amer. Math. Soc. \textbf{27} (2014), no.~2, 339--388.

\bibitem[BGT13]{BGT}
A.~J. Blumberg, D.~Gepner, and G.~Tabuada, \emph{A universal characterization
  of higher algebraic {$K$}-theory}, Geom. Topol. \textbf{17} (2013), no.~2,
  733--838.

\bibitem[BKRS22]{BachmannCatMilnor}
T.~Bachmann, A.~Khan, C.~Ravi, and V.~Sosnilo, \emph{{Categorical Milnor
  squares and K-theory of algebraic stacks}}, Selecta Math (N.S.) \textbf{28}
  (2022), no.~5.

\bibitem[BKS04]{Schwede}
D.~Benson, H.~Krause, and S.~Schwede, \emph{Realizability of modules over
  {T}ate cohomology}, Trans. Amer. Math. Soc. \textbf{356} (2004), no.~9,
  3621--3668.

\bibitem[BL06]{BartelsLueck}
A.~Bartels and W.~L\"{u}ck, \emph{Isomorphism conjecture for homotopy
  {$K$}-theory and groups acting on trees}, J. Pure Appl. Algebra \textbf{205}
  (2006), no.~3, 660--696.

\bibitem[BL12]{Lueck-CAT0}
\bysame, \emph{The {B}orel conjecture for hyperbolic and {${\rm
  CAT}(0)$}-groups}, Ann. of Math. (2) \textbf{175} (2012), no.~2, 631--689.

\bibitem[BL21]{BL}
R.~Burklund and I.~Levy, \emph{On the {$K$}-theory of regular coconnective
  rings}, Selecta Math. (N.S.) \textbf{29} (2021), no.~2.

\bibitem[BLR08]{Lueck-hyperbolic}
A.~Bartels, W.~L\"{u}ck, and H.~Reich, \emph{The {$K$}-theoretic
  {F}arrell-{J}ones conjecture for hyperbolic groups}, Invent. Math.
  \textbf{172} (2008), no.~1, 29--70.

\bibitem[BLRR14]{Lueck-arithmetic}
A.~Bartels, W.~L\"{u}ck, H.~Reich, and H.~R\"{u}ping, \emph{K- and {L}-theory
  of group rings over {$GL_n({\bf Z})$}}, Publ. Math. Inst. Hautes \'{E}tudes
  Sci. \textbf{119} (2014), 97--125.

\bibitem[BM22]{BM}
H.~\"{O}. Bay{\i}nd{\i}r and T.~Moulinos, \emph{Algebraic {$K$}-theory of
  {${\rm THH}(\Bbb{F}_p)$}}, Trans. Amer. Math. Soc. \textbf{375} (2022),
  no.~6, 4177--4207.

\bibitem[Bur85]{Burghelea}
D.~Burghelea, \emph{The cyclic homology of the group rings}, Comment. Math.
  Helv. \textbf{60} (1985), no.~3, 354--365.

\bibitem[Bur23]{Burghelea-erratum}
\bysame, \emph{Note on the conjugacy classes of elements and their centralizers
  for the free product of two groups}, arXiv:2301.10683, 01 2023.

\bibitem[BVP88]{MR952571}
D.~Burghelea and M.~Vigu\'{e}-Poirrier, \emph{Cyclic homology of commutative
  algebras. {I}}, Algebraic topology---rational homotopy ({L}ouvain-la-{N}euve,
  1986), Lecture Notes in Math., vol. 1318, Springer, Berlin, 1988, pp.~51--72.

\bibitem[CDH{\etalchar{+}}25]{CDHII}
B.~Calm\`es, E.~Dotto, Y.~Harpaz, F.~Hebestreit, M.~Land, K.~Moi, D.~Nardin,
  T.~Nikolaus, and W.~Steimle, \emph{Hermitian {K}-theory for stable
  $\infty$-categories {II}: {Cobordism categories and additivity}},
  arXiv:2009.07224v5; accepted for publication at Acta Math. (2025).

\bibitem[CDW23]{CDW}
M.~Christ, T.~Dyckerhoff, and T.~Walde, \emph{Complexes of stable
  ${\infty}$-categories}, arXiv:2301.02606 (2023).

\bibitem[Coh87]{Cohen}
R.~L. Cohen, \emph{A model for the free loop space of a suspension}, Algebraic
  topology ({S}eattle, {W}ash., 1985), Lecture Notes in Math., vol. 1286,
  Springer, Berlin, 1987, pp.~193--207.

\bibitem[CX12]{Chen:2012uy}
H.~Chen and C.~Xi, \emph{{Recollements of derived categories II: Algebraic
  K-theory}}, arXiv:1212.1879 (2012).

\bibitem[CX19]{MR3896125}
\bysame, \emph{Exact contexts, noncommutative tensor products, and universal
  localizations}, Trans. Amer. Math. Soc. \textbf{371} (2019), no.~5,
  3647--3672.

\bibitem[DFP23]{DFP}
C.~Davis, J.~Frank, and I.~Patchkoria, \emph{On the cyclic homology of certain
  universal differential graded algebras}, arXiv:2308.12369 (2023).

\bibitem[DGM13]{DGM}
B.~I. Dundas, T.~G. Goodwillie, and R.~McCarthy, \emph{The local structure of
  algebraic {K}-theory}, Algebra and Applications, vol.~18, Springer-Verlag
  London, Ltd., London, 2013.

\bibitem[DKR11]{DKR}
J.~F. Davis, Q.~Khan, and A.~A. Ranicki, \emph{Algebraic {$K$}-theory over the
  infinite dihedral group: an algebraic approach}, Algebr. Geom. Topol.
  \textbf{11} (2011), no.~4, 2391--2436.

\bibitem[FJ93]{FJ1}
F.~T. Farrell and L.~E. Jones, \emph{Isomorphism conjectures in algebraic
  {$K$}-theory}, J. Amer. Math. Soc. \textbf{6} (1993), no.~2, 249--297.

\bibitem[Geo08]{Geoghegan}
R.~Geoghegan, \emph{Topological methods in group theory}, Graduate Texts in
  Mathematics, vol. 243, Springer, New York, 2008.

\bibitem[Ger74]{Gersten}
S.~M. Gersten, \emph{{$K$}-theory of free rings}, Comm. Algebra \textbf{1}
  (1974), 39--64. \MR{396671}

\bibitem[Goo86]{Goodwillie}
T.~G. Goodwillie, \emph{Relative algebraic {$K$}-theory and cyclic homology},
  Ann. of Math. (2) \textbf{124} (1986), no.~2, 347--402.

\bibitem[Gra76]{Grayson}
D.~Grayson, \emph{Higher algebraic {$K$}-theory. {II} (after {D}aniel
  {Q}uillen)}, Algebraic {$K$}-theory ({P}roc. {C}onf., {N}orthwestern {U}niv.,
  {E}vanston, {I}ll., 1976), Lecture Notes in Math., Vol. 551, Springer,
  Berlin, 1976, pp.~217--240.

\bibitem[GRW89]{GRW}
S.~Geller, L.~Reid, and C.~Weibel, \emph{The cyclic homology and {$K$}-theory
  of curves}, J. Reine Angew. Math. \textbf{393} (1989), 39--90.

\bibitem[Hes97]{Hesselholt-Witt}
L.~Hesselholt, \emph{Witt vectors of non-commutative rings and topological
  cyclic homology}, Acta Math. \textbf{178} (1997), no.~1, 109--141.

\bibitem[Hes07]{Hesselholt}
\bysame, \emph{On the {$K$}-theory of the coordinate axes in the plane}, Nagoya
  Math. J. \textbf{185} (2007), 93--109.

\bibitem[HM97]{HM}
L.~Hesselholt and I.~Madsen, \emph{Cyclic polytopes and the {$K$}-theory of
  truncated polynomial algebras}, Invent. Math. \textbf{130} (1997), no.~1,
  73--97.

\bibitem[HMS22]{HMS}
R.~Haugseng, V.~Melani, and P.~Safronov, \emph{Shifted coisotropic
  correspondences}, J. Inst. Math. Jussieu \textbf{21} (2022), no.~3, 785--849.

\bibitem[HSS17]{HoyoisScherotzkeSibilla}
M.~Hoyois, S.~Scherotzke, and N.~Sibilla, \emph{Higher traces, noncommutative
  motives, and the categorified {C}hern character}, Adv. Math. \textbf{309}
  (2017), 97--154.

\bibitem[Kas87]{Kassel}
C.~Kassel, \emph{Cyclic homology, comodules, and mixed complexes}, J. Algebra
  \textbf{107} (1987), no.~1, 195--216.

\bibitem[Kle22]{Klemenc}
J.~Klemenc, \emph{The stable {H}ull of an exact {$\infty$}-category}, Homology
  Homotopy Appl. \textbf{24} (2022), no.~2, 195--220.

\bibitem[LM12]{LM2}
A.~Lindenstrauss and R.~McCarthy, \emph{On the algebraic {K}-theory of formal
  power series}, J. K-Theory \textbf{10} (2012), no.~1, 165--189.

\bibitem[LMMT24]{LMMT}
M.~Land, A.~Mathew, L.~Meier, and G.~Tamme, \emph{Purity in chromatically
  localized algebraic {$K$}-theory}, J. Amer. Math. Soc. \textbf{37} (2024),
  no.~4, 1011--1040.

\bibitem[LNP25]{LNP}
S.~Linskens, D.~Nardin, and L.~Pol, \emph{Global homotopy theory via partially
  lax limits}, Geom. Topol. \textbf{29} (2025), no.~3, 1345--1440.

\bibitem[Lod92]{Loday}
J.-L. Loday, \emph{Cyclic homology}, Grundlehren der Mathematischen
  Wissenschaften [Fundamental Principles of Mathematical Sciences], vol. 301,
  Springer-Verlag, Berlin, 1992, Appendix E by Mar\'{i}a O. Ronco.

\bibitem[LR05]{Lueck-survey}
W.~L\"{u}ck and H.~Reich, \emph{The {B}aum-{C}onnes and the {F}arrell-{J}ones
  conjectures in {$K$}- and {$L$}-theory}, Handbook of {$K$}-theory. {V}ol. 1,
  2, Springer, Berlin, 2005, pp.~703--842.

\bibitem[LT19]{LT}
M.~Land and G.~Tamme, \emph{On the {$K$}-theory of pullbacks}, Ann. of Math.
  (2) \textbf{190} (2019), no.~3, 877--930.

\bibitem[Lur09]{HTT}
J.~Lurie, \emph{Higher topos theory}, Annals of Mathematics Studies, vol. 170,
  Princeton University Press, Princeton, NJ, 2009.

\bibitem[Lur17]{HA}
\bysame, \emph{Higher {A}lgebra}, Available at the author's homepage
  \url{http://www.math.ias.edu/~lurie}, 2017.

\bibitem[Mil71]{Milnor}
J.~Milnor, \emph{Introduction to algebraic {$K$}-theory}, Princeton University
  Press, Princeton, N.J.; University of Tokyo Press, Tokyo, 1971, Annals of
  Mathematics Studies, No. 72.

\bibitem[NS18]{NS}
T.~Nikolaus and P.~Scholze, \emph{On topological cyclic homology}, Acta Math.
  \textbf{221} (2018), no.~2, 203--409.

\bibitem[Pop86]{Popescu}
D.~Popescu, \emph{General {N}\'{e}ron desingularization and approximation},
  Nagoya Math. J. \textbf{104} (1986), 85--115.

\bibitem[Qui73]{Quillen-Higher-K}
D.~Quillen, \emph{Higher algebraic {$K$}-theory. {I}}, 85--147. Lecture Notes
  in Math., Vol. 341.

\bibitem[Ric09]{Richter}
B.~Richter, \emph{Divided power structures and chain complexes}, Alpine
  perspectives on algebraic topology, Contemp. Math., vol. 504, Amer. Math.
  Soc., Providence, RI, 2009, pp.~237--254.

\bibitem[Rig22]{Riggenbach}
N.~Riggenbach, \emph{On the algebraic {$K$}-theory of double points}, Algebr.
  Geom. Topol. \textbf{22} (2022), no.~1, 373--403.

\bibitem[Sos24]{Sosnilo}
V.~Sosnilo, \emph{{$\Bbb A^1$}-invariance of localizing invariants}, Ann.
  K-Theory \textbf{9} (2024), no.~1, 89--118.

\bibitem[Sou81]{Soule}
C.~Soul\'{e}, \emph{Rational {$K$}-theory of the dual numbers of a ring of
  algebraic integers}, Algebraic {$K$}-theory, {E}vanston 1980 ({P}roc.
  {C}onf., {N}orthwestern {U}niv., {E}vanston, {I}ll., 1980), Lecture Notes in
  Math., vol. 854, Springer, Berlin-New York, 1981, pp.~402--408.

\bibitem[Spe20]{Speirs1}
M.~Speirs, \emph{On the {$K$}-theory of truncated polynomial algebras,
  revisited}, Adv. Math. \textbf{366} (2020), 107083, 18.

\bibitem[Spe21]{Speirs2}
\bysame, \emph{On the {$K$}-theory of coordinate axes in affine space}, Algebr.
  Geom. Topol. \textbf{21} (2021), no.~1, 137--171.

\bibitem[Tam18]{Tamme:excision}
G.~Tamme, \emph{Excision in {A}lgebraic {K}-theory revisited}, Compos. Math.
  \textbf{154} (2018), no.~9, 1801--1814.

\bibitem[TT90]{ThomasonTrobaugh}
R.~Thomason and T.~Trobaugh, \emph{Higher algebraic {$K$}-theory of schemes and
  of derived categories}, The {G}rothendieck {F}estschrift, {V}ol.\ {III},
  Progr. Math., vol.~88, Birkh{\"a}user Boston, Boston, MA, 1990, pp.~247--435.

\bibitem[vdK71]{MR291158}
W.~van~der Kallen, \emph{Le {$K_{2}$} des nombres duaux}, C. R. Acad. Sci.
  Paris S\'{e}r. A-B \textbf{273} (1971), A1204--A1207.

\bibitem[Wal78a]{Waldhausen1}
F.~Waldhausen, \emph{Algebraic {$K$}-theory of generalized free products. {I},
  {II}}, Ann. of Math. (2) \textbf{108} (1978), no.~1, 135--204.

\bibitem[Wal78b]{Waldhausen2}
\bysame, \emph{Algebraic {$K$}-theory of generalized free products. {III},
  {IV}}, Ann. of Math. (2) \textbf{108} (1978), no.~2, 205--256.

\bibitem[Wal78c]{Waldhausen}
\bysame, \emph{Algebraic {K}-theory of topological spaces. {I}}, Proceedings of
  Symposia in Pure Mathematics \textbf{32} (1978).

\bibitem[Wei81]{Weibel1}
C.~A. Weibel, \emph{Mayer-{V}ietoris sequences and module structures on
  {$NK\sb\ast $}}, Algebraic {$K$}-theory, {E}vanston 1980 ({P}roc. {C}onf.,
  {N}orthwestern {U}niv., {E}vanston, {I}ll., 1980), Lecture Notes in Math.,
  vol. 854, Springer, Berlin, 1981, pp.~466--493.

\bibitem[Wei89]{Weibel-homotopy}
\bysame, \emph{Homotopy algebraic {$K$}-theory}, Algebraic {$K$}-theory and
  algebraic number theory ({H}onolulu, {HI}, 1987), Contemp. Math., vol.~83,
  Amer. Math. Soc., Providence, RI, 1989, pp.~461--488.

\end{thebibliography}

\end{document}